\newcommand{\poubelle}[1]{}
\def\R{\mathbb{R}}
\def\{{\left\lbrace}
\def\}{\right\rbrace}
\DeclareMathOperator{\e}{e}
\newtheorem{theorem}{Theorem}[section]
\newtheorem{corollary}[theorem]{Corollary}
\newtheorem{lemma}[theorem]{Lemma}
\newtheorem{prop}[theorem]{Proposition}
\newenvironment{customthm}[1]
  {\innercustomthm}
  {\endinnercustomthm}
\renewcommand{\v}{\mathrm{v}}
\newcommand{\w}{\mathrm{w}}
\newcommand{\T}{\mathbb{T}}
\newcommand{\ext}{\mathrm{ext}}
\newcommand{\hess}{\textnormal{Hess}}
\newcommand{\vv}{\mathrm{v}(\mu_\varphi)}
\newcommand{\Scal}{\mathrm{Scal}}
\newcommand{\Ric}{\mathrm{Ric}}
\newcommand{\ric}{\mathrm{ric}}
\newcommand{\PSH}{{\rm PSH}}
\newcommand{\Ent}{{\rm Ent}}
\newcommand{\Osc}{{\rm Osc}}
\newcommand{\capa}{{\rm Cap}}
\newcommand{\vol}{{\rm Vol}}
\newcommand{\Tr}{\Lambda_0}
\newcommand{\tor}{{\mathfrak t}}
\title{Weighted cscK metrics (I): a priori estimates}
\author{Eleonora Di Nezza, Simon Jubert and Abdellah Lahdili}
\begin{document}

\maketitle

\begin{abstract}
\noindent Let $X$ be a compact K\"ahler manifold. In this paper we study the existence of constant weighted scalar curvature K\"ahler (weighted cscK) metrics on $X$. More precisely, we establish a priori $C^{k}$-estimates ($k\geq 0$) for the K\"ahler potential associated with these metrics, thereby extending a result due to Chen and Cheng in the classical cscK setting.
\end{abstract}

\tableofcontents

\section{Introduction}{\label{s:intro}}

A central theme in K\"ahler geometry is the search for canonical K\"ahler metrics. The concept of \emph{constant weighted scalar curvature K\"ahler metrics} (weighted cscK for short), introduced by the third author in \cite{Lah19}, provides a unification of various related notions of canonical K\"ahler metrics that satisfy special curvature conditions. 

We present here the first of two papers, focused on investigating the existence of weighted cscK metrics in a given cohomology class. In this paper, we derive \emph{a priori} $C^{k}$-estimates, $k\geq 0$, for the K\"ahler potential associated to a metric with constant weighted scalar curvature. This work extends the result of Chen and Cheng \cite{CC21a} concerning cscK metrics. In the subsequent second paper, we will demonstrate the existence of the weighted cscK metric provided that the weighted Mabuchi energy is relatively coercive, and we will explore several geometric applications. This will be a generalization of Chen-Cheng's continuity path for cscK-metrics. Some works have already been carried on in some special cases: \cite{AJL,Jub23} deals with the semisimple fibration case, \cite{HL} the $g$-solitons case, \cite{He} the extremal metrics case, and \cite{NN24} treats the case of $\sigma$-extremal metrics.

\subsection{Weighted cscK problem}
We fix a compact K\"ahler manifold $(X, \omega_0)$ of complex dimension $n$.  Let $\mathbb{T}$ be an $r$-dimensional compact real torus in the reduced automorphism group $\mathrm{Aut}_{\mathrm{red}}(X)$ with Lie algebra $\mathfrak{t}$. Suppose $\omega_0$ is a $\mathbb{T}$-invariant K\"ahler form and consider the set of smooth $\mathbb{T}$-invariant K\"ahler potentials $\mathcal{K}(X,\omega_0)^{\mathbb{T}}$ relative to $\omega_0$. For $\varphi \in \mathcal{K}(X,\omega_0)^{\mathbb{T}}$ we denote by $\omega_{\varphi}=\omega_0+dd^c\varphi$  the corresponding K\"ahler metric. Here $d=\partial +\overline{\partial}$ and $d^c= i (\overline{\partial}-\partial )$ so that $dd^c= 2i \partial \overline{\partial}$ as in \cite[page 29]{Gau}. This choice of $d$ and $d^c$ is consistent with the fact that we choose the scalar curvature to be defined as two times the trace; note that this normalization has an impact also in the expression of the Ricci form, i.e. $\Ric(\omega_\varphi)=\Ric(\omega_0)-\frac{1}{2} dd^c \log \frac{\omega_\varphi^n}{\omega_0^n}$.

It is well-known that the $\mathbb{T}$-action on $X$ is $\omega_{\varphi}$-Hamiltonian (see e.g. \cite[Section 2.5]{Gau}) and  that $P_{\varphi}:= \mu_{\varphi}(X)$ is a convex polytope in $\mathfrak{t}^{*}$ \cite{Ati, GS, Lah19}. Here $\mu_\varphi: X \rightarrow \mathfrak{t}^{*}$ is the moment map associated to $\omega_\varphi$ and $\mathfrak{t}^{*}$ stands for the dual vector space for the Lie algebra $\mathfrak{t}$ of $\mathbb{T}$. We normalize $\mu_{\varphi}$ by


\begin{equation}{\label{normalizing-moment-map}}
\mu_{\varphi}=\mu_{0} + d^c\varphi,
\end{equation}

\noindent in such a way that $P=P_{\varphi}$ is $\varphi$-independent, see \cite[Lemma 1]{Lah19}. From now on, we always suppose that the moment map satisfies \eqref{normalizing-moment-map}. Let us stress that the identity in \eqref{normalizing-moment-map} means that for all $\xi \in \mathfrak{t}$, 
\begin{equation*}
\langle \mu_{\varphi}, \xi \rangle = \langle \mu_{0}, \xi \rangle  + \langle d^c\varphi, \xi \rangle, \qquad\text{ with } \qquad
    \langle d^c\varphi,\xi  \rangle= \sum_{a=1}^r \langle d^c\varphi(\xi_a)\xi_a^*,\xi  \rangle,
\end{equation*}
where  $(\xi_a)_{a=1,\cdots,r}$ is a basis of the Lie algebra $\mathfrak{t}$ of $\mathbb{T}$ and $(\xi^*_a)_{a=1,\cdots,r}$ is its dual basis.

For a given positive weight function $\v \in {C}^{\infty}(P,\R_{>0})$, we define \textit{the $\v$-weighted Ricci form} by

\begin{equation}{\label{v:ricci}}
    \Ric_\v(\omega_\varphi) := \Ric(\omega_\varphi) -\frac{1}{2}dd^c \log \v(\mu_\varphi).
\end{equation}
Then the $\v$-weighted scalar curvature of a $\T$-invariant K\"ahler metric $\omega_\varphi \in [\omega_0]$ is defined by

\begin{equation}{\label{def:scalv2}}
 {\Scal_\v(\omega_\varphi)}:= 2\v(\mu_\varphi) \Lambda_{\varphi,\v}(\Ric_\v(\omega_\varphi)),
\end{equation}

\noindent where $\Lambda_{\varphi,\v}$ is the $\v$-weighted trace with respect to $\omega_\varphi$ (see Appendix \ref{a:weighted:trace} for more details about this operator). This definition is equivalent to the one introduced in \cite{Lah19} (see Lemma \ref{l:scalv}), but we do prefer the above more compact expression. 

Given a second weight $\w \in {C}^{\infty}(P,\R)$, a $\T$-invariant K\"ahler metric $\omega_\varphi$ is called \emph{$(\v,\w)$-weighted cscK} if its $\v$-weighted scalar curvature satifies

\begin{equation}{\label{wcsck}}
    \Scal_\v(\omega_\varphi)=\w(\mu_\varphi).
\end{equation}

The significance of \eqref{wcsck} in relation to various geometric conditions is thoroughly examined in \cite{Lah19} (see also \cite{AJL} for $\v$-solitons).  However, we shall mention a few specific cases below:

\begin{enumerate}
    \item[(a)] for $\v = 1$ and $\w=c$ is a constant, \eqref{wcsck} corresponds to the classical cscK equation;
    \item[(b)]  for $\v = 1$ and $\w=\ell_{\ext}$, where $\ell_{\ext}$ is the affine extremal function, the weighted cscK metrics are Calabi's extremal K\"ahler metrics \cite{EC82};
    \item[(c)] for $\v>0$ is any smooth function and $\w=2\v(x)(n+ \langle d\log\v(x),x \rangle )$, \eqref{wcsck} corresponds to the weighted $\v$-soliton examined in \cite{AJL, HL}, generalizing the well-studied K\"ahler-Ricci solitons (see e.g. \cite{BW, Blo, TZ, TZ02});
    \item[(d)] for $\v$ and $\w$ polynomials, then $(\v,\w)$-cscK metrics on $X$ correspond to Calabi's extremal K\"ahler metrics on the total space of an holomorphic fibration $Y$ with fiber $X$, called semisimple principal fibration \cite{ACGT11, AJL, Jub23};
    \item[(e)] for $\v=\ell^{-n-1}, \w=a \ell^{-n-2}$,  $\alpha=c_1(L)$, where $\ell$ is a positive affine-linear function on the polytope, $a$ is a constant,  and $L$ is  a polarization of $X$, \eqref{wcsck} describes  a scalar flat cone K\"ahler metric on the affine cone $(L^{-1})^{\times}$ polarized by the lift of $\xi = d\ell$ to $L^{-1}$ via $\ell$ \cite{AC,ACL}.
\end{enumerate}

\subsection{Main results and strategy of the proof} 

The starting point is to re-write the $4$th order non-linear PDE in \eqref{wcsck} as a system of two linear elliptic PDEs:

 \begin{equation}{\label{weightedsystem:intro}}
     \begin{cases}
     F &= \log\left( \v(\mu_\varphi) \frac{\omega_\varphi^n}{\omega^n_0}\right) \\
    \Delta_{\varphi,\v} F  &=  - \frac{\w ( \mu_\varphi)}{\v(\mu_\varphi)} + 2 \Lambda_{\varphi,\v} ( \Ric(\omega_0 )).
\end{cases}
\end{equation}

\noindent The equivalence between \eqref{wcsck} and \eqref{weightedsystem:intro} is established in Proposition \ref{p:weighted:syst}. \\
In the above equation $\Delta_{\varphi, \v}$ is the \textit{$\v$-weighted Laplacian} with respect to $\omega_{\varphi}$ (we refer to Appendix \ref{a:weighted:lap} for basic properties of this operator).  Note that for $\v=1$ and $\w$ being constant equal to $\frac{n c_1(X) \cdot \{\omega_0\}^{n-1}}{ \{\omega_0\}^{n}}$, we retrieve the system proposed by \cite{CC21a} in the cscK case. 

In the original proof by Chen and Cheng  \cite{CC21a} in the unweighted setting, the authors derive $C^0$ and $C^2$ a priori estimates by establishing an intermediate $C^1$ a priori estimate for the K\"ahler potential $\varphi$. With the $C^0$ and $C^2$ estimates established, higher order estimates are subsequently obtained through standard regularity results for complex Monge-Ampère equations and linear elliptic operators. 
It is important to emphasize that their proof of the $C^0$ estimate \cite[Theorem 5.1]{CC21a} relies on the Alexandroff maximum principle for the real Monge-Amp\`ere operator.

In this paper, following \cite{DD21}, we present a proof of the \(C^0\) and \(C^2\) estimates that bypasses the need for an intermediate $C^1$ estimate. However, it is important to note that the ``weighted" case introduces significantly more challenges and subtleties due to the presence of the weighted operators $\Delta_{\varphi, \v}$ and $\Lambda_{\varphi, \v}$ in \eqref{weightedsystem:intro}. In practice, many additional terms require (uniform) bounds when establishing a priori estimates. We address these terms with detailed explanations to ensure clarity in the presentation. In particular the $C^2$-estimate holds under the assumption that the weight $\v$ is log-concave (i.e. $\log \v$ is a concave function). We are not able to remove such an assumption at this moment. But we do emphasise that in all (but the last one) geometric applications listed above, such assumption is satisfied. 

Our first main result is the $C^0$-estimate, which does not require any condition on the weight:
\begin{customthm}{A}\label{thm:intro:c0}
The functions $\varphi$ and $F$ are uniformly bounded by a constant that depends only on $n, \omega_0$, $\v$, $\w$ and the entropy $\Ent(\varphi):= \int_X \log\left(\frac{\omega_\varphi^n}{\omega_0^n}\right)\omega_\varphi^n$.
\end{customthm}

The proof of the $C^0$-estimate is an extension of the approach proposed in \cite{DD21}, which crucially employs pluripotential theory.

From this, we then get a $C^2$-estimate:

\begin{customthm}{B}\label{thm:intro:c2}
Assume $\v$ is log-concave.
There exists a positive constant $C>0$ depending only on $\omega_0$, $\v$, $\w$, $\|F\|_{C^0}$ and $\| \varphi \|_{C^0}$ such that

\begin{equation*}
\max_X\left(|dF|^2_\varphi + \Lambda_0(\omega_\varphi)\right) \leq C
\end{equation*}
where $\Lambda_0$ is the trace with respect to $\omega_0$.
\end{customthm}
Two crucial intermediate steps consist in establishing a priori integral $L^p$-estimates ($p\geq 1$) for the Laplacian $\Delta_{\varphi}$ (Theorem \ref{thm:integral}) and a lower bound for the weighted Laplacian $\Delta_{\varphi,\v} u$ for $u := e^{\frac{F}{2}} |dF|_\varphi^2 + K \Lambda_0(\omega_\varphi)$ (Proposition \ref{p:weightedlap:u}).

\smallskip






Noting that $\Delta_{\varphi,\v}$ is elliptic (see Lemma \ref{lemma weighted Laplacian}), the higher-order estimates can be derived using standard arguments of elliptic and complex Monge-Amp\`ere theories \cite{Au76, LU} similar to those used in the usual cscK problem (cf \cite[page 3]{CC21a}).

\subsection{Notations} In the text we will denote by  $\Delta_\varphi, \Lambda_\varphi$ the Laplacian and the trace with respect to $\omega_\varphi$. The scalar curvature $\Scal$ is defined as twice the trace of the Ricci form, i.e. $\Scal (\omega_\varphi)= 2 \Lambda_\varphi( \Ric(\omega_\varphi))$ where the Ricci form is defined by $\Ric(\omega_\varphi):=\ric_\varphi(J\cdot,\cdot)$ with $\ric_\varphi$ the Ricci symmetric 2-tensor of the underlying riemannian metric $g_\varphi$. We denote by $\nabla^\varphi$ the gradient with respect to $g_\varphi$. 

\subsection{Organisation of the paper} In Section \ref{s:plury} we introduce notations and results which will be used throughout the paper: we provide definitions and references to pluripotential theory. Section \ref{s:wsystem} is dedicated to establish the equivalence between the fourth-order PDE \eqref{wcsck} and the system of elliptic PDE's \eqref{weightedsystem:intro}. 
The proof of the \(C^0\)-estimate, stated in Theorem \ref{thm:intro:c0}, is presented in Section \ref{s:c0-estimates}. 
In Section \ref{s:integral:c2:esti}, we prove the \(L^p\)-estimates needed for the proof of Theorem \ref{thm:intro:c2} which can be found in Section \ref{s:c2:esti}.

\section*{Acknowledgements}
The first author is supported by the project SiGMA ANR-22-ERCS-0004-02. The authors are grateful to Vestislav Apostolov for valuable suggestions. We thank Tat Dat Tô and Chung-Ming Pan for useful comments on earlier version of the paper. The author also thank S\`ebastien Boucksom and Mattias Jonsson for useful discussions.

We would like to acknowledge Jiyuan Han and Yaxiong Liu for bringing to our attention the overlap between our project and their work \cite{HanLiu}. Despite these similarities, our approach introduces some distinct methodologies. Notably, our $C^0$-estimate leverages pluripotential theory, and we circumvent the need for a gradient estimate by directly deriving the $C^2$-estimate from the $C^0$ one.

Par of this material is based upon work done while the first author was supported by the National ScienceFoundation under Grant No. DMS-1928930, while the author was in
residence at the Simons Laufer Mathematical Sciences Institute
(formerly MSRI) in Berkeley, California, during the Fall 2024 semester.

\section{Preliminaries in pluripotential theory}{\label{s:plury}}

We recall below some ingredients from pluripotential theory that are going to be crucial in what follows in order to establish uniform $C^0$-estimates.
The first is a powerful integrability result which is known as a uniform version of Skoda’s integrability theorem. We introduce
 $$\nu_{\omega_0} :=\sup_{u,x} \nu(u,x), \quad x\in X,\; u\in \PSH(X, \omega_0), $$ where $\nu(u, x)$ denotes the Lelong number of $u$ at $x$. We note that from the proof of \cite[Lemma 8.10]{GZ17} one can deduce that $\nu_{\omega_0} \geq 1$.
 
\begin{theorem}\label{thm: Skoda} 
Let $c< 2 \nu_{\omega_0}^{-1} $. Then there exists a uniform constant $C>0$ such that for all $u\in \PSH(X, \omega_0)$ with $\sup_X u=0$ we have
$$\int_X e^{-c u} \omega_0^{n} \leq C.$$
\end{theorem}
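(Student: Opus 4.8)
The plan is to combine three standard ingredients from pluripotential theory, all of which can be found in \cite{GZ17}: Skoda's local integrability theorem, asserting that if $\psi$ is plurisubharmonic near a point $x\in\mathbb{C}^n$ with Lelong number $\nu(\psi,x)<2$ then $e^{-\psi}$ is integrable in a neighbourhood of $x$; the upper semicontinuity of Lelong numbers together with the (implicitly used) finiteness of $\nu_{\omega_0}$; and the relative compactness in $L^1(X,\omega_0^n)$ of the family $\mathcal F:=\{u\in\PSH(X,\omega_0):\sup_X u=0\}$, whose $L^1$-limits again lie in $\PSH(X,\omega_0)$ and are not identically $-\infty$.

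First I would localize the statement. Choose a finite covering $X=\bigcup_i B_i'$ by Euclidean coordinate balls with $B_i'\Subset B_i$, together with smooth bounded potentials $\rho_i$ satisfying $\omega_0=dd^c\rho_i$ on $B_i$. For $u\in\PSH(X,\omega_0)$ the function $\psi_u:=u+\rho_i$ is genuinely plurisubharmonic on $B_i$ and $\nu(\psi_u,x)=\nu(u,x)$; since $\rho_i$ and the density $\omega_0^n/dV$ are bounded above and below on $\overline{B_i'}$, it is enough to bound $\int_{B_i'}e^{-c\psi_u}\,dV$ uniformly in $u$, for each fixed $i$.

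I would then argue by contradiction. If no such uniform bound existed, there would be an index $i$ and functions $u_j\in\mathcal F$ with $\int_{B_i'}e^{-c(u_j+\rho_i)}\,dV\to\infty$; by relative compactness, after passing to a subsequence $u_j\to u_\infty$ in $L^1(X)$ with $u_\infty\in\PSH(X,\omega_0)$ and $u_\infty\not\equiv-\infty$. The hypothesis $c<2\nu_{\omega_0}^{-1}$ gives $c\,\nu(u_\infty,x)\le c\,\nu_{\omega_0}<2$ at every point, so by Skoda's theorem $e^{-c(u_\infty+\rho_i)}$ is locally integrable at every point of $B_i$, hence integrable on some neighbourhood $B_i''$ of $\overline{B_i'}$ with $\overline{B_i''}\subset B_i$. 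The decisive step is then an effective form of the semicontinuity of complex singularity exponents (the Demailly--Kollár semicontinuity theorem): from $u_j+\rho_i\to u_\infty+\rho_i$ in $L^1(B_i'')$, with all functions plurisubharmonic and $e^{-c(u_\infty+\rho_i)}\in L^1(B_i'')$, one deduces $\sup_j\int_{B_i'}e^{-c(u_j+\rho_i)}\,dV<\infty$, contradicting the choice of the $u_j$ (the finitely many initial terms being themselves finite, again by Skoda applied to each $u_j+\rho_i$, whose Lelong numbers are $<2/c$).

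The main obstacle is precisely this last effective statement: mere $L^1$-convergence of plurisubharmonic functions does not control $e^{-c\psi_j}$ pointwise, so the passage from integrability of the limit to a \emph{uniform} bound along the sequence genuinely requires Demailly--Kollár semicontinuity (or the stronger Guan--Zhou strong openness property). A secondary point worth spelling out is the finiteness of $\nu_{\omega_0}$, which follows from a compactness argument together with the upper semicontinuity of $u\mapsto\nu(u,x)$; in any case the statement is vacuous when $\nu_{\omega_0}=\infty$, so no generality is lost in assuming it finite. An alternative to the contradiction argument is to invoke the quantitative Skoda estimate on a ball, bounding $\int_{B_i'}e^{-c\psi_u}\,dV$ by a constant depending only on $n$, the gap $2-c\nu_{\omega_0}$, and $\sup_{B_i}\psi_u$ (hence on $\sup_X u=0$ and $\|\rho_i\|_\infty$); this route also makes transparent that the final constant $C$ depends only on $X$, $\omega_0$ and $c$.
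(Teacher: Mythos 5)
Your argument is essentially correct, but note that the paper does not prove this statement at all: it is quoted verbatim as the uniform version of Skoda's theorem and referred to \cite[Theorem 8.11]{GZ17}, so there is no internal proof to compare with. What you propose is a legitimate independent proof: localization to coordinate balls, relative compactness in $L^1$ of the normalized family, Skoda's local integrability applied to the limit, and the Demailly--Koll\'ar semicontinuity theorem (or Guan--Zhou strong openness) to convert integrability of the limit into a uniform bound along the sequence. The argument of \cite{GZ17} is different in flavour: it rests on a \emph{quantitative} local Skoda estimate (proved via the Riesz representation of a psh function and Jensen's inequality, with constants depending only on the Lelong-number gap and an $L^1$ normalization), combined with upper semicontinuity of Lelong numbers and a covering argument, and so avoids the deep Demailly--Koll\'ar input; your contradiction argument is softer but leans on a stronger black box. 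Two small points you should make explicit. First, Demailly--Koll\'ar requires the \emph{strict} inequality $c<c_{u_\infty+\rho_i}(\overline{B_i'})$, whereas integrability of $e^{-c(u_\infty+\rho_i)}$ only gives $c\leq c_{u_\infty+\rho_i}$; this is harmless because $c<2\nu_{\omega_0}^{-1}$ is strict, so you can run Skoda at some $c'\in(c,2\nu_{\omega_0}^{-1})$ to get $c_{u_\infty+\rho_i}\geq c'>c$ (or, as you say, invoke strong openness). Second, in your ``alternative'' quantitative route the local estimate cannot depend only on an upper bound for $\sup_{B_i}\psi_u$: one needs a normalization from below (e.g.\ a uniform lower bound on $\sup_{B_i'}u$, or on $\|u\|_{L^1}$), since otherwise $\psi_u\equiv -j$ gives a counterexample; this lower control does follow from the $L^1$ compactness of $\{u\in\PSH(X,\omega_0):\sup_X u=0\}$, which you list among your ingredients, but it should be invoked there explicitly rather than attributed to $\sup_X u=0$ alone.
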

We refer to \cite[Theorem 8.11]{GZ17} for a proof. The following result is due to Ko{\l}odziej \cite{Kolo}:
 \begin{theorem}\label{thm: Kol} 
 Assume $\omega_u^n=f \omega_0^n$ with $f\in L^p$ for some $p>1$. Then there exists $C>0$ depending only on  $\omega, n, \|f\|_{L^p}$ such that 
 $$ \Osc_X u\leq C$$
 \end{theorem}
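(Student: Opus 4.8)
The plan is to run Ko{\l}odziej's capacity argument \cite{Kolo}, organised so as to make transparent the r\^ole of the uniform Skoda integrability theorem just recalled. Replacing $u$ by $u-\sup_X u$, which changes neither the hypothesis nor $\Osc_X u$, we may assume $\sup_X u=0$, so that it suffices to produce a lower bound $u\ge-C$ with $C=C(n,\omega_0,p,\|f\|_{L^p})$; we work with $\omega_0$-psh $u$ of full Monge--Amp\`ere mass (automatic in the intended application, where $u$ is smooth). The main device is the Monge--Amp\`ere capacity $\capa(E):=\sup\{\,\int_E(\omega_0+dd^c v)^n:\ v\in\PSH(X,\omega_0),\ 0\le v\le1\,\}$ for Borel $E\subset X$, for which $\capa(E)\ge\int_E\omega_0^n=:\vol(E)$ (test against a constant) and $\capa(X)=\int_X\omega_0^n<\infty$.

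First I would establish the comparison of sublevel sets: for $s>0$ and $t\in(0,1]$, applying the comparison principle \cite{GZ17} to $u$ and $\psi:=tv-t-s$ — where $v\in\PSH(X,\omega_0)$, $0\le v\le1$, so that $\{u<-s-t\}\subseteq\{u<\psi\}\subseteq\{u<-s\}$ and $(\omega_0+dd^c\psi)^n\ge t^n(\omega_0+dd^c v)^n$ — and then taking the supremum over $v$ yields $t^n\,\capa(\{u<-s-t\})\le\int_{\{u<-s\}}\omega_u^n$. Inserting the equation $\omega_u^n=f\,\omega_0^n$ and H\"older's inequality with $q:=p/(p-1)$ gives
\[
t^n\,\capa(\{u<-s-t\})\ \le\ \int_{\{u<-s\}}f\,\omega_0^n\ \le\ \|f\|_{L^p}\,\vol(\{u<-s\})^{1/q}.
\]
The decisive input is then the volume--capacity comparison, which is exactly where Theorem~\ref{thm: Skoda} enters: as in \cite{GZ17}, the uniform Skoda estimate produces constants $A,\alpha>0$ depending only on $(X,\omega_0)$, through $\nu_{\omega_0}$ and the Skoda constant, with $\vol(E)\le A\exp(-\alpha\,\capa(E)^{-1/n})$ for every Borel $E$ — that is, $\capa$ dominates Lebesgue measure faster than any power of itself.

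Write $g(s):=\capa(\{u<-s\})$, a non-increasing function bounded by $\capa(X)$. Combining the two previous facts gives, for all $s>0$ and $t\in(0,1]$,
\[
t^n\,g(s+t)\ \le\ \|f\|_{L^p}\,A^{1/q}\exp\!\Bigl(-\frac{\alpha}{q}\,g(s)^{-1/n}\Bigr).
\]
Separately, Theorem~\ref{thm: Skoda} applied to $u$ together with Chebyshev's inequality gives $\vol(\{u<-s\})\le C_S e^{-cs}$ for a fixed $c\in(0,2\nu_{\omega_0}^{-1})$, hence $g(s)\le C'e^{-c(s-1)/q}\to0$; in particular there is an \emph{explicit} $s_0=s_0(n,\omega_0,p,\|f\|_{L^p})$ making $g(s_0)$ smaller than any prescribed threshold $\varepsilon_0$. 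Choosing $\varepsilon_0=\varepsilon_0(n,\omega_0,p,\|f\|_{L^p})$ small enough that $x^{-2}\exp(-\frac{\alpha}{q}x^{-1/n})\le1$ for $x\le\varepsilon_0$ and that the next inequality applies, the displayed inequality upgrades, for all $s\ge s_0$ and $t\in(0,1]$, to a De~Giorgi-type inequality $t^n g(s+t)\le B\,g(s)^2$; a standard bootstrap iteration (cf. \cite{GZ17}) then forces $g(s_0+1)=0$.

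It remains to convert this into a pointwise bound. Since $\capa(\{u<-(s_0+1)\})=0$ and $u$ is upper semicontinuous, the open set $\{u<-(s_0+1)\}$ has zero capacity, hence is empty because $\capa(U)\ge\vol(U)>0$ for every nonempty open $U$; thus $u\ge-(s_0+1)$ on $X$, and with $\sup_X u=0$ this yields $\Osc_X u\le s_0+1=:C$ with the stated dependence. The only genuinely non-formal ingredient — and the step I expect to be the main obstacle in a self-contained treatment — is the volume--capacity comparison: the comparison-principle estimate and the De~Giorgi iteration are soft, whereas quantifying that the Monge--Amp\`ere capacity controls $L^p$ densities of Monge--Amp\`ere measures is precisely the content of the uniform Skoda integrability theorem.
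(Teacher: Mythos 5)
Your argument is correct and is essentially the standard capacity proof of Ko{\l}odziej's estimate (comparison principle on sublevel sets, volume--capacity domination coming from the uniform Skoda theorem, then a De~Giorgi-type iteration), which the paper does not reprove but simply quotes from \cite{Kolo} (see also \cite{GZ17}). The one step you leave implicit --- passing from the Chebyshev volume decay to the capacity decay $g(s)\le C'e^{-c(s-1)/q}$ --- is fine, since it just reuses your comparison-principle inequality with $t=1$, as the shift by $1$ and the exponent $1/q$ indicate.
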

\noindent Here $L^p:=L^p\left(\omega_0^{n} \right)$. In the following we specify the reference volume form in the notation of the $L^p$ norms only if is different from the standard one.

At last, we recall \cite[Theorem 3.3]{DDL4}, that can be viewed as a generalization of Ko{\l}odziej's theorem:
\begin{theorem}\label{thm: uniform estimate} 
	Fix $a\in [0,1),A>0$, $\chi \in \PSH(X,\omega_0)$ and  $0\leq f \in L^p$ for some $p>1$. Assume that  $u\in \PSH(X,\omega_0)$, normalized by $\sup_X u=0$, satisfies 
	\begin{equation*}
		\label{eq: volume cap domination 0}
		\omega_u^{n}  \leq f\omega_0^{n}  + a\omega_{\chi}^{n} .
	\end{equation*}
	Assume also that  
	\begin{equation}
		\label{eq: volume cap domination}
		\int_E f\omega_0^{n}  \leq A [\capa_{\chi}(E)]^2,
	\end{equation}
	for every Borel subset $E\subset X$. If $P[u]$ is less singular than $\chi$ (i.e. $\chi \leq P[u]+C$, for some $C>0$) then 
$$\chi -\sup_X \chi- C\Big(\|f\|_{L^p},p,(1-a)^{-1},A\Big) \leq u.$$
\end{theorem}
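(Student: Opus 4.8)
The plan is to run Ko\l odziej's capacity method, in the relative (prescribed‑singularity) form developed in \cite{DDL4}. Since replacing $\chi$ by $\chi-\sup_X\chi$ changes neither $\capa_\chi$ nor the hypothesis that $P[u]$ is less singular than $\chi$, I may assume $\sup_X\chi=0$; writing $\omega_v:=\omega_0+dd^c v$ for $v\in\PSH(X,\omega_0)$, the statement becomes: $\{u<\chi-s\}$ is empty for $s$ above an explicit constant. As $u$ and $\chi-s$ are both $\omega_0$‑psh, it suffices to prove $u\ge\chi-s$ Lebesgue‑a.e., and hence — using that a set of zero $\capa_\chi$ is Lebesgue‑negligible, a point which here relies on the hypothesis on $P[u]$ — it is enough to show that the non‑increasing function
$$g(s):=\capa_\chi\big(\{u<\chi-s\}\big)$$
vanishes for $s\ge C$, where $C$ may depend on $\|f\|_{L^p},p,(1-a)^{-1},A$ (and on the background data $n,\omega_0$).

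The heart of the argument is a self‑improving inequality for $g$. Fix $t\in(0,1)$ and a competitor $v\in\PSH(X,\omega_0)$ with $\chi\le v\le\chi+1$, and put $w:=(1-t)\chi+tv-s$. From $\chi\le v\le\chi+1$ one gets
$$\{u<\chi-s-t\}\subseteq\{u<w\}\subseteq\{u<\chi-s+t\},$$
and $w$ has the same singularity type as $\chi$; this is precisely the situation where the hypothesis that $P[u]$ is less singular than $\chi$ lets one apply the comparison principle of relative pluripotential theory to the pair $(u,w)$, giving $\int_{\{u<w\}}\omega_w^n\le\int_{\{u<w\}}\omega_u^n$. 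Now $\omega_w=(1-t)\omega_\chi+t\omega_v$, so $\omega_w^n\ge(1-t)^n\omega_\chi^n+t^n\omega_v^n$; bounding the right side above via $\omega_u^n\le f\omega_0^n+a\omega_\chi^n$ and — crucially — choosing $t<1-a^{1/n}$ so that $(1-t)^n>a$, the two $\omega_\chi^n$‑terms cancel (this is where $a<1$ is used), and $\int_Ef\,\omega_0^n\le A\capa_\chi(E)^2$ together with monotonicity of $\capa_\chi$ leaves $t^n\int_{\{u<w\}}\omega_v^n\le A\,g(s-t)^2$. Taking the supremum over admissible $v$ and using the first inclusion gives, for a fixed $t_0=t_0(a,n)\in(0,1)$ and all $t\in(0,t_0)$,
$$g(s+t)\le \frac{A'}{t^{n}}\,g(s)^2,\qquad A'=A'(A,n).$$

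To launch the iteration one needs some crude decay $g(s)\to 0$; this follows from Skoda's uniform integrability (Theorem \ref{thm: Skoda}) and H\"older's inequality applied to $f\in L^p$, together with the continuity of $\capa_\chi$ along decreasing families of sublevel sets. One then runs the De Giorgi–Ko\l odziej lemma with \emph{shrinking} steps: fix $s_1$ with $2A'g(s_1)\le t_0^n$ and set inductively $t_k:=(2A'g(s_k))^{1/n}\in(0,t_0]$, $s_{k+1}:=s_k+t_k$. The displayed inequality forces $g(s_{k+1})\le\tfrac12 g(s_k)$, hence $g(s_k)\le 2^{-(k-1)}g(s_1)$, so $t_k\le t_0\,2^{-(k-1)/n}$ and $\sum_k t_k\le t_0/(1-2^{-1/n})<\infty$. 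Therefore $s_k\uparrow s_\infty\le s_1+t_0/(1-2^{-1/n})=:C$ and, since $g$ is non‑increasing with $g(s_k)\to0$, one gets $g(s)=0$ for all $s\ge C$; tracking the constants gives $C=C(\|f\|_{L^p},p,(1-a)^{-1},A)$, and the reduction of the first paragraph finishes the proof.

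I expect the main obstacle to be the comparison inequality $\int_{\{u<w\}}\omega_w^n\le\int_{\{u<w\}}\omega_u^n$ in the second paragraph: justifying it when the reference $\chi$, and hence $w$, is genuinely singular requires the full relative pluripotential machinery of \cite{DDL4}, and this is exactly where the hypothesis that $P[u]$ be less singular than $\chi$ enters. The two other delicate points are the absorption of the term $a\omega_\chi^n$ through the choice $t<1-a^{1/n}$ (responsible for the $(1-a)^{-1}$‑dependence of $C$) and the uniform control of the non‑pluripolar mass $\int_{\{u<\chi-s\}}\omega_\chi^n$ of deep sublevel sets, needed both for the crude initial decay of $g$ and for passing at the end from vanishing of $g$ to emptiness of the sublevel set.
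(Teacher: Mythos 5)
The paper itself offers no proof of Theorem \ref{thm: uniform estimate}: it is recalled from \cite{DDL4}, to which the authors explicitly refer for the (more general) proof. Your proposal reconstructs exactly the Ko{\l}odziej--type capacity iteration on which that proof is based, and the core of it is sound: the inclusion trick with $w=(1-t)\chi+tv-s$, the expansion $\omega_w^n\geq(1-t)^n\omega_\chi^n+t^n\omega_v^n$, the absorption of the term $a\omega_\chi^n$ by choosing $t<1-a^{1/n}$ (which is indeed the source of the $(1-a)^{-1}$ dependence), the inequality $g(s+t)\leq A' t^{-n}g(s)^2$, and the shrinking-step De Giorgi iteration are all correct, modulo harmless bookkeeping ($g(s-t)$ versus $g(s)$, the normalization of the competitors). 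You also locate correctly where the hypothesis $\chi\leq P[u]+C$ enters the comparison step: since $w\leq\chi+1\leq P[u]+C+1$, one has $u\leq\max(u,w)\leq P[u]+C+1$, hence $\int_X\omega_{\max(u,w)}^n=\int_X\omega_{P[u]}^n=\int_X\omega_u^n$, which is what makes $\int_{\{u<w\}}\omega_w^n\leq\int_{\{u<w\}}\omega_u^n$ legitimate for the singular reference $w$.

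The one step that is wrong as stated is the reduction in your first paragraph: for a genuinely singular $\chi$, $\capa_\chi(E)=0$ does \emph{not} imply that $E$ is Lebesgue negligible, and the hypothesis on $P[u]$ (which concerns $u$, not $\chi$) does not repair this: all competitors $v$ share the singularity type of $\chi$, so their Monge--Amp\`ere measures can be small, or even identically zero, on open sets, and $\capa_\chi$ need not dominate any multiple of the volume. For the same reason the ``crude decay'' of $g$ cannot come from Skoda and H\"older alone; it comes from the very same comparison inequality, before inserting \eqref{eq: volume cap domination}, namely $t^n g(s+t)\leq\int_{\{u<\chi-s+t\}}f\omega_0^{n}\leq\|f\|_{L^p}\,{\rm vol}(\{u<t-s\})^{1/p'}$, to which Skoda (Theorem \ref{thm: Skoda}) is then applied. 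The correct way to close the argument is also through $\omega_u^n$ rather than through the volume: once $g(s)=0$ for $s\geq C$, the hypotheses give $\omega_u^n(\{u<\chi-s\})\leq\int_{\{u<\chi-s\}}f\omega_0^n+a\,\capa_\chi(\{u<\chi-s\})=0$, and the relative domination principle of \cite{DDL4} (applicable because $\chi-s\leq P[u]$ for $s\geq C$, and $u$ has full mass relative to $P[u]$) yields $u\geq\chi-s$ everywhere; this is the second, decisive, place where the hypothesis on $P[u]$ enters. Alternatively, in the only situation where the present paper uses the theorem, $\chi=\delta\psi$ with $\psi$ bounded, so that $\capa_\chi\geq\capa_{(1-\delta)\omega_0}\geq(1-\delta)^n{\rm vol}$ (cf.\ \cite{DL14}) and your original reduction does go through. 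With that correction, your proposal is a faithful reconstruction of the cited proof.
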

It is worth it to mention that such a result is stated and proved in a much more general version in \cite{DDL4} to which we refer for a proof. Here $\capa_{\chi}(E)$ is the $\chi$-relative capacity of $E$ and it is defined as 
\begin{equation*}\label{eq: capphi_def}
 \textup{Cap}_\chi(E) := \sup \{ 
 \int_E \omega_{u}^{n}  \; :\; u \in \PSH(X,\omega_0),\, \chi-1\leq u \leq \chi 
\} 
\end{equation*}
and 
$$P[u] = \left(\sup \{v \in 
\textup{PSH}(X,\omega_0), \ v \leq 0 \textup{ and } v \leq u+C,   \textup{ for some }  C>0 \} \right)^*, $$
where $*$ denotes the upper semi-continuous regularization.
For later purposes we mention that $P[u]=0$ if and only if $u$ is such that $\int_X \omega_u^n =V$ (\cite[Theorem 1.3]{DDL2}).

\section{Weighted cscK equation as a system of elliptic PDEs}{\label{s:wsystem}}
The goal of this section is to write the weighted cscK equation \eqref{wcsck} as a system of two elliptic PDEs. We start by showing the identity \eqref{def:scalv2} for the weighted scalar curvature $\Scal_\v(\omega_\varphi)$ defined in \cite{Lah19} by
\begin{equation}{\label{def:scalv}}
     \Scal_\v(\omega_\varphi)= \vv \Scal(\omega_\varphi) - 2 \Delta_\varphi \vv + \sum_{a,b=1}^r \v_{,ab}(\mu_\varphi) g_\varphi(\xi_a, \xi_b).
\end{equation}

We develop the second term from the RHS of \eqref{def:scalv}:

\begin{equation}{\label{comp:1}}
\begin{split}
    \Delta_\varphi \v(\mu_\varphi) =& \Lambda_\varphi d d^c \v(\mu_\varphi) \\
    =& \Lambda_\varphi d \left( \sum_{a=1}^r \v_{,a}(\mu_\varphi) d^c \mu_\varphi^a \right) \\
    =& \Lambda_\varphi \left( \sum_{a,b=1}^r \v_{,a}(\mu_\varphi) dd^c \mu_\varphi^a + \v_{,ab}(\mu_\varphi) d \mu_\varphi^a \wedge d^c \mu_\varphi^b\right) \\
      =& \sum_{a,b=1}^r \v_{,a}(\mu_\varphi) \Delta_\varphi \mu_\varphi^a  -\v_{,ab}(\mu_\varphi) \Lambda_\varphi \left(   \omega_\varphi( \xi_a, \cdot ) \wedge   \omega_\varphi( \xi_b, J \cdot ) \right),
     \end{split}
     \end{equation}
where $\mu_\varphi^a:= \langle \mu_\varphi,\xi_a \rangle$  and we use that $\omega_{\varphi}(\xi_a, \cdot)= -d\mu^a_\varphi$ and $\omega_{\varphi}(\xi_b, J \cdot)= d^c\mu^b_\varphi$ for the last equality. We continue the computation     
\begin{equation*}
\begin{split}
    \Delta_\varphi \v(\mu_\varphi)    
     =&\sum_{a,b=1}^r \v_{,a}(\mu_\varphi) \Delta_\varphi \mu_\varphi^a  - \v_{,ab}(\mu_\varphi) \Lambda_\varphi \left(   g_\varphi( J \xi_a,  \cdot ) \wedge   g_\varphi( \xi_b,  \cdot ) \right)  \\
     =&\sum_{a,b=1}^r \left(\v_{,a}(\mu_\varphi) \Delta_\varphi \mu_\varphi^a  - \v_{,ab}(\mu_\varphi) \sum_{k=1}^{2n}   g_\varphi( J \xi_a,  e_k )    g_\varphi( \xi_b,  Je_k )\right) \\
     =&\sum_{a,b=1}^r \v_{,a} (\mu_\varphi) \Delta_\varphi \mu_\varphi^a  + \v_{,ab}(\mu_\varphi)    g_\varphi(  \xi_a,  \xi_b ),
\end{split}
\end{equation*}
\noindent where $(e_k)^{2n}_{k=1}$ is a local orthonormal frame w.r.t. $g_\varphi$. We showed

\begin{equation}\label{comp:deltav}
    - \Delta_\varphi \v(\mu_\varphi) = - \sum_{a,b=1}^r \v_{,a}(\mu_\varphi) \Delta_\varphi \mu_\varphi^a  - \v_{,ab}(\mu_\varphi)    g_\varphi(  \xi_a,  \xi_b ).
\end{equation}
For later purposes, let us observe that the first 4th lines of the above computation give
\begin{eqnarray}\label{eq: lap v}
 \nonumber \Delta_0 \v(\mu_\varphi)&=& \sum_{a,b=1}^r \v_{,a}(\mu_\varphi) \Delta_0 \mu_\varphi^a  +\v_{,ab}(\mu_\varphi) \Lambda_0 \left(   d \mu_\varphi^a \wedge d^c \mu_\varphi^b \right)\\
 &=& \sum_{a,b=1}^r \v_{,a}(\mu_\varphi) \Delta_0 \mu_\varphi^a  +\v_{,ab}(\mu_\varphi) g_0(d \mu_\varphi^a, d \mu_\varphi^b),
 \end{eqnarray}
where the last identity follows from a general fact, i.e. that for smooth functions $f_1, f_2$,
\begin{equation}\label{eq:standard}
g_0 \left(   d f_1, d f_2 \right) = \Lambda_0 \left(   d f_1 \wedge d^c f_2 \right).
\end{equation}

\begin{lemma}
The scalar curvature has the alternative expression

    \begin{equation}{\label{formul:scal1}}
\begin{split}
        \Scal(\omega_\varphi)=& 2 \Lambda_{\varphi}(\Ric(\omega_0)) -  \Delta_\varphi F +  \frac{1}{\vv}\left(  \sum_{a,b=1}^r \v_{,a}(\mu_\varphi) \Delta_\varphi \mu_\varphi^a + \v_{,ab}(\mu_\varphi)    g_\varphi(  \xi_a,  \xi_b ) \right)  \\
        & - \frac{1}{\vv^2} \left( \sum_{a,b=1}^r\v_{,a}(\mu_\varphi) \v_{,b}(\mu_\varphi) g_\varphi( \xi_a, \xi_b) \right),
\end{split}        
\end{equation}

\noindent where $F:=\log\left( \v(\mu_\varphi) \frac{\omega_\varphi^n}{\omega^n_0}\right)$.
\end{lemma}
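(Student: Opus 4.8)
The plan is to start from the transformation rule for the Ricci form under a change of K\"ahler potential recalled in the introduction, namely $\Ric(\omega_\varphi)=\Ric(\omega_0)-\frac12 dd^c\log\frac{\omega_\varphi^n}{\omega_0^n}$, and take the $\omega_\varphi$-trace. Since $\Scal(\omega_\varphi)=2\Lambda_\varphi(\Ric(\omega_\varphi))$ and $\Lambda_\varphi dd^c g=\Delta_\varphi g$ for any smooth function $g$ (the same identity used in the first line of \eqref{comp:1}), this yields
\begin{equation*}
\Scal(\omega_\varphi)=2\Lambda_\varphi(\Ric(\omega_0))-\Delta_\varphi\log\frac{\omega_\varphi^n}{\omega_0^n}.
\end{equation*}
Then I would substitute $\log\frac{\omega_\varphi^n}{\omega_0^n}=F-\log\v(\mu_\varphi)$, which is precisely the definition of $F$, to get
\begin{equation*}
\Scal(\omega_\varphi)=2\Lambda_\varphi(\Ric(\omega_0))-\Delta_\varphi F+\Delta_\varphi\log\v(\mu_\varphi).
\end{equation*}

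It then remains to expand the last term. The chain-rule computation carried out in \eqref{comp:1}--\eqref{comp:deltav} for $\v$ applies verbatim with any smooth function $H\colon P\to\R$ in place of $\v$, giving
\begin{equation*}
\Delta_\varphi H(\mu_\varphi)=\sum_{a,b=1}^r H_{,a}(\mu_\varphi)\,\Delta_\varphi\mu_\varphi^a+H_{,ab}(\mu_\varphi)\,g_\varphi(\xi_a,\xi_b).
\end{equation*}
Applying this to $H=\log\v$, with $(\log\v)_{,a}=\v_{,a}/\v$ and $(\log\v)_{,ab}=\v_{,ab}/\v-\v_{,a}\v_{,b}/\v^2$, and then separating the $\v^{-1}$ and $\v^{-2}$ contributions, produces exactly the right-hand side of \eqref{formul:scal1}. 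Note that the terms $\Delta_\varphi\mu_\varphi^a$ are kept unexpanded, as in \eqref{comp:deltav}.

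There is no essential obstacle here beyond careful bookkeeping. The two points deserving attention are the normalization conventions — that $\Scal=2\Lambda\Ric$ and the factor $\frac12$ appears in the Ricci transformation law, both tied to the choice $dd^c=2i\partial\overline\partial$ — and the correct handling of the chain rule for the composition $\log\v\circ\mu_\varphi$, for which the relevant inner functions are the moment map components $\mu_\varphi^a=\langle\mu_\varphi,\xi_a\rangle$. Once these are in place the identity \eqref{formul:scal1} follows by direct substitution.
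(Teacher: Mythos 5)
Your proposal is correct and follows essentially the same route as the paper: start from $\Scal(\omega_\varphi)=2\Lambda_\varphi(\Ric(\omega_0))-\Delta_\varphi\log\frac{\omega_\varphi^n}{\omega_0^n}$, substitute $\log\frac{\omega_\varphi^n}{\omega_0^n}=F-\log\v(\mu_\varphi)$, and expand $\Delta_\varphi\log\v(\mu_\varphi)$ via the moment-map chain rule of \eqref{comp:1}--\eqref{comp:deltav}. The only cosmetic difference is that you apply that chain rule directly to $H=\log\v$, whereas the paper first writes $\Delta_\varphi\log\v=\frac{1}{\vv}\Delta_\varphi\v(\mu_\varphi)-\frac{1}{\vv^2}\Lambda_\varphi(d\vv\wedge d^c\vv)$ and then expands each piece; the two computations are identical after the split into the $\v^{-1}$ and $\v^{-2}$ terms.
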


\begin{proof}
By definition     
\begin{equation}\label{comp:scal:1}
    \begin{split}
        \Scal(\omega_\varphi)= 2 \Lambda_{\varphi}(\Ric(\omega_0)) -  \Delta_\varphi \log \frac{\omega_\varphi^n}{\omega^n_0}.
    \end{split}
\end{equation}
Observe that there is a factor $2$ since, with our notations the scalar curvature is twice the trace. Also, by definition of $F$

\begin{equation*}
     \Delta_\varphi \log \frac{\omega^n}{\omega^n_0} = \Delta_\varphi F - \Delta_\varphi \log \v(\mu_\varphi).
\end{equation*}
We develop 

\begin{equation*}
    \begin{split}
        \Delta_\varphi \log \v(\mu_\varphi) =& \Lambda_\varphi d d^c \log \v(\mu_\varphi) \\
          =& \frac{1}{\vv} \Delta_\varphi \v(\mu_\varphi) - \Lambda_\varphi  \left( \frac{1}{\vv^2} d \vv \wedge d^c
         \vv \right) \\
          =&  \frac{1}{\vv}\Delta_\varphi \v(\mu_\varphi) - \frac{1}{\vv^2} \Lambda_\varphi  \left( \sum_{a,b=1}^r\v_{,a} (\mu_\varphi) \v_{,b} (\mu_\varphi)d\mu_\varphi^a \wedge d^c \mu_\varphi^b \right) \\
          =&  \frac{1}{\vv}\Delta_\varphi \v(\mu_\varphi) - \frac{1}{\vv^2} \left( \sum_{a,b=1}^r\v_{,a} (\mu_\varphi) \v_{,b} (\mu_\varphi)g_\varphi( \xi_a, \xi_b) \right).
        \end{split}
\end{equation*}
The last equality follows from the same computation as in \eqref{comp:1}. Then, using \eqref{comp:deltav},

\begin{equation*}
\begin{split}
     \Delta_\varphi \log \frac{\omega_{\varphi}^n}{\omega^n_0}=& \Delta_\varphi F -  \frac{1}{\vv}\Delta_\varphi \v(\mu_\varphi) + \frac{1}{\vv^2} \left( \sum_{a,b=1}^r\v_{,a}(\mu_\varphi) \v_{,b}(\mu_\varphi) g_\varphi( \xi_a, \xi_b) \right) \\
     =& \Delta_\varphi F -  \frac{1}{\vv} \left(  \sum_{a,b=1}^r \v_{,a}(\mu_\varphi) \Delta_\varphi \mu_\varphi^a + \v_{,ab}(\mu_\varphi)    g_\varphi(  \xi_a,  \xi_b ) \right)  \\
     &+ \frac{1}{\vv^2} \left( \sum_{a,b=1}^r\v_{,a}(\mu_\varphi) \v_{,b}(\mu_\varphi) g_\varphi( \xi_a, \xi_b) \right).
\end{split}
\end{equation*}
Injecting in \eqref{comp:scal:1}, we get the result.
\end{proof}

\begin{lemma}{\label{l:scalv}}
    The weighted scalar curvature has the alternative expression
\begin{equation*}
    \Scal_\v(\omega_\varphi) = 2\v(\mu_\varphi) \Lambda_{\varphi,\v}(\Ric_\v(\omega_\varphi))
\end{equation*}

\noindent where  $\Lambda_{\varphi,\v}$ is defined in \eqref{def:weighted:trace} and $\Ric_\v$ is the $\v$-weighted Ricci from, defined in \eqref{v:ricci}.
\end{lemma}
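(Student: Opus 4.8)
The plan is to unravel both sides of the claimed identity in terms of the quantities already computed in the previous lemma, namely the expression \eqref{formul:scal1} for $\Scal(\omega_\varphi)$, and to match them using the definition \eqref{def:weighted:trace} of the weighted trace $\Lambda_{\varphi,\v}$ together with the definition \eqref{v:ricci} of $\Ric_\v$. Concretely, one starts from the definition \eqref{def:scalv} of $\Scal_\v(\omega_\varphi)$ and substitutes the formula \eqref{formul:scal1} for the ordinary scalar curvature $\Scal(\omega_\varphi)$, as well as the identity \eqref{comp:deltav} for $-\Delta_\varphi \v(\mu_\varphi)$. After this substitution the left-hand side becomes an explicit combination of $\v(\mu_\varphi)\Lambda_\varphi(\Ric(\omega_0))$, $\v(\mu_\varphi)\Delta_\varphi F$, terms of the form $\v_{,a}(\mu_\varphi)\Delta_\varphi \mu_\varphi^a$, and the Hessian-type terms $\v_{,ab}(\mu_\varphi) g_\varphi(\xi_a,\xi_b)$ and $\frac{1}{\v(\mu_\varphi)}\v_{,a}(\mu_\varphi)\v_{,b}(\mu_\varphi) g_\varphi(\xi_a,\xi_b)$.

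Next I would expand the right-hand side $2\v(\mu_\varphi)\Lambda_{\varphi,\v}(\Ric_\v(\omega_\varphi))$. Using $\Ric_\v(\omega_\varphi) = \Ric(\omega_\varphi) - \tfrac12 dd^c\log\v(\mu_\varphi)$ and the fact (from Appendix \ref{a:weighted:trace}) that $\Lambda_{\varphi,\v}(\alpha) = \Lambda_\varphi(\alpha) + \tfrac{1}{\v(\mu_\varphi)}(\text{correction involving } d\v(\mu_\varphi))$ applied to a $(1,1)$-form $\alpha$, one rewrites $2\v(\mu_\varphi)\Lambda_{\varphi,\v}(\Ric_\v(\omega_\varphi))$ as $2\v(\mu_\varphi)\Lambda_\varphi(\Ric(\omega_\varphi)) - \v(\mu_\varphi)\Delta_\varphi\log\v(\mu_\varphi)$ plus the weighted-trace correction terms. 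The first summand is simply $\v(\mu_\varphi)\Scal(\omega_\varphi)$ by the definition of scalar curvature recalled in the Notations, and for $\Delta_\varphi\log\v(\mu_\varphi)$ I would reuse verbatim the intermediate computation in the proof of the previous lemma, which expresses it as $\tfrac{1}{\v(\mu_\varphi)}\Delta_\varphi\v(\mu_\varphi) - \tfrac{1}{\v(\mu_\varphi)^2}\sum_{a,b}\v_{,a}(\mu_\varphi)\v_{,b}(\mu_\varphi)g_\varphi(\xi_a,\xi_b)$. The point is that essentially all the pieces have already been assembled in \eqref{comp:1}--\eqref{formul:scal1}; what remains is bookkeeping.

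Then I would compare the two expansions term by term. The $\v(\mu_\varphi)\Lambda_\varphi(\Ric(\omega_0))$ and $\v(\mu_\varphi)\Delta_\varphi F$ contributions match directly against \eqref{formul:scal1}; the pure-Laplacian-of-moment-map terms $\v_{,a}(\mu_\varphi)\Delta_\varphi\mu_\varphi^a$ and the curvature-of-torus terms $\v_{,ab}(\mu_\varphi)g_\varphi(\xi_a,\xi_b)$ coming from $-\Delta_\varphi\v(\mu_\varphi)$ via \eqref{comp:deltav} must cancel against the identical-looking terms in \eqref{formul:scal1}, and the $\tfrac{1}{\v(\mu_\varphi)^2}\v_{,a}\v_{,b}g_\varphi(\xi_a,\xi_b)$ terms must likewise reconcile. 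The only genuine subtlety — and the step I expect to be the main obstacle — is keeping track of the weighted-trace correction term in $\Lambda_{\varphi,\v}$ and checking that the factor $2\v(\mu_\varphi)$ distributes so that precisely the $d\v(\mu_\varphi)$-type cross terms are produced that make both sides agree; this requires care with the $dd^c = 2i\partial\bar\partial$ normalization and the factor of $\tfrac12$ in \eqref{v:ricci}. Once the correction term in $\Lambda_{\varphi,\v}$ is written out explicitly and matched, the identity follows. Since the statement is an algebraic rearrangement of already-established formulas, no additional analytic input is needed.
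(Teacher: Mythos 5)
Your overall route is viable and genuinely different from the paper's: the paper starts from \eqref{def:scalv}, converts the terms $\sum_a\v_{,a}(\mu_\varphi)\Delta_\varphi\mu_\varphi^a$ into base-metric quantities through the identity \eqref{comp:dcf} (i.e.\ \eqref{comp:vdeltamu}), and then recognizes the result as $\v(\mu_\varphi)\left(-\Delta_{\varphi,\v}F+2\Lambda_{\varphi,\v}\Ric(\omega_0)\right)$, whereas you propose to expand the right-hand side directly through $\Ric_\v(\omega_\varphi)=\Ric(\omega_\varphi)-\tfrac12 dd^c\log\v(\mu_\varphi)$ and compare with \eqref{def:scalv}. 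However, there is a genuine gap exactly at the step you defer. The correction in \eqref{def:weighted:trace} is not ``a term involving $d\v(\mu_\varphi)$'': it is $\langle d\log\v(\mu_\varphi),\mu_\theta\rangle$, so to compute $\Lambda_{\varphi,\v}(\Ric_\v(\omega_\varphi))$ you must first identify the \emph{normalized moment map} of the form being traced, and this identification is where the content of the lemma lies. Concretely you need $\mu^a_{\Ric(\omega_\varphi)}=-\tfrac12\Delta_\varphi\mu_\varphi^a$ (the $\omega_\varphi$-version of the fact $\mu_{\Ric(\omega_0)}=-\tfrac12\Delta_0\mu_0$ quoted from \cite{Lah19}; the paper uses it in the equivalent form \eqref{comp:dcf}) together with $\mu^{\xi_a}_{dd^c\log\v(\mu_\varphi)}=d^c\log\v(\mu_\varphi)(\xi_a)$ from the appendix normalization. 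Moreover, since \eqref{def:weighted:trace} depends on the choice of normalization of $\mu_\theta$, you must also check that the normalization your decomposition induces on $\Ric_\v(\omega_\varphi)$ coincides with the one used later, via $\Ric_\v(\omega_\varphi)=\Ric(\omega_0)-\tfrac12 dd^cF$, in Proposition \ref{p:weighted:syst}; that consistency is again precisely \eqref{comp:dcf}. By contrast, the $dd^c=2i\partial\overline{\partial}$ convention and the factor $\tfrac12$ in \eqref{v:ricci}, which you single out as the main danger, are not where the difficulty sits.

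Once this ingredient is supplied, your computation does close, and in fact more directly than you outline: \eqref{formul:scal1} is not needed at all. Using only $2\Lambda_\varphi(\Ric(\omega_\varphi))=\Scal(\omega_\varphi)$, $\Lambda_\varphi dd^c=\Delta_\varphi$, and the moment maps above, one gets
\begin{align*}
2\v(\mu_\varphi)\,\Lambda_{\varphi,\v}(\Ric_\v(\omega_\varphi))
&= \v(\mu_\varphi)\Scal(\omega_\varphi)-\v(\mu_\varphi)\Delta_\varphi\log\v(\mu_\varphi)\\
&\quad-\sum_{a=1}^r\v_{,a}(\mu_\varphi)\Delta_\varphi\mu_\varphi^a
-\frac{1}{\v(\mu_\varphi)}\sum_{a,b=1}^r\v_{,a}(\mu_\varphi)\v_{,b}(\mu_\varphi)\,g_\varphi(\xi_a,\xi_b),
\end{align*}
and then the identity $\v(\mu_\varphi)\Delta_\varphi\log\v(\mu_\varphi)=\Delta_\varphi\v(\mu_\varphi)-\frac{1}{\v(\mu_\varphi)}\sum_{a,b=1}^r\v_{,a}(\mu_\varphi)\v_{,b}(\mu_\varphi)g_\varphi(\xi_a,\xi_b)$ (the intermediate step you quote from the previous proof) together with \eqref{comp:deltav} reduces the right-hand side to $\v(\mu_\varphi)\Scal(\omega_\varphi)-2\Delta_\varphi\v(\mu_\varphi)+\sum_{a,b=1}^r\v_{,ab}(\mu_\varphi)g_\varphi(\xi_a,\xi_b)$, which is exactly \eqref{def:scalv}. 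So the plan is sound, but as written it leaves the decisive identification of the moment maps entering the weighted trace unproved, and that is the one nontrivial input of the lemma.
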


\begin{proof}

 We first re-write the term $\sum_{a=1}^r\v_{,a}(\mu_\varphi) \Delta_\varphi \mu_\varphi^a$. For $\xi_b \in \mathfrak{t}$, we have

\begin{equation}{\label{comp:dcpsi}}
    \begin{split}
d^c F (\xi_b) = & \frac{d^c\vv(\xi_b)}{\vv} + d^c \log \left( \frac{\omega_\varphi^n}{\omega_0^n} \right)(\xi_b)  \\
=& \frac{\sum_{a=1}^r \v_{,a}(\mu_\varphi)d^c \mu_\varphi^a (\xi_b)}{\vv} + d^c \log \left( \frac{\omega_\varphi^n}{\omega_0^n} \right)(\xi_b) \\
=& \frac{\sum_{a=1}^r \v_{,a}(\mu_\varphi)g_\varphi(\xi_a,\xi_b)}{\vv} + d^c \log \left( \frac{\omega_\varphi^n}{\omega_0^n} \right)(\xi_b). \\
    \end{split}
\end{equation}
Also, using Cartan's formula and the fact that $\xi_b\lrcorner\omega_\varphi=-d\mu_\varphi^b$ we deduce $\mathcal{L}_{J\xi_b}\omega_\varphi=-dd^c \mu_\varphi^b$, where $\mathcal{L}_{J\xi_b}$ denotes the Lie derivative in the direction $J\xi_b$. And the same holds for $\omega_0$. It follows from arguments in the proof of \cite[Lemma 5]{Lah19}
\begin{equation}{\label{comp:dcf}}
d^c \log\left( \frac{\omega_\varphi^n}{\omega^n_0} \right)(\xi_b) =  \Delta_\varphi \mu_\varphi^b - \Delta_0 \mu_0^b ,
\end{equation}

\noindent Multiplying \eqref{comp:dcf} by  $\v_{,b}(\mu_\varphi)$, summing over $b=1,\cdots,r$ and using \eqref{comp:dcpsi} we obtain

\begin{equation}{\label{comp:vdeltamu}}
    \begin{split}
\sum_{b=1}^r\v_{,b}(\mu_\varphi) \Delta_\varphi \mu_\varphi^b = &\sum_{b=1}^r\v_{,b}(\mu_\varphi) \Delta_0 \mu_0^b + \v_{,b}(\mu_\varphi)  d^c \log\left( \frac{\omega_\varphi^n}{\omega^n_0} \right)(\xi_b) \\
=& \sum_{b=1}^r\v_{,b}(\mu_\varphi) \Delta_0 \mu_0^b + \v_{,b}(\mu_\varphi) d^c F(\xi_b) - \sum_{a,b=1}^r   \frac{ \v_{,a}(\mu_\varphi) \v_{,b}(\mu_\varphi)g_\varphi(\xi_a,\xi_b)}{\vv}.
    \end{split}
\end{equation}

\noindent Injecting in \eqref{formul:scal1}, we find 

\begin{eqnarray}{\label{comp:scal2}}
\nonumber \Scal(\omega_\varphi)&=& 2 \Lambda_{\varphi}(\Ric(\omega_0)) -  \Delta_\varphi F + \frac{1}{\vv}\sum_{a,b=1}^r  \v_{,ab}(\mu_\varphi)    g_\varphi(  \xi_a,  \xi_b )\\
 \nonumber       &&- \frac{1}{\vv^2} \left(\sum_{a,b=1}^r\v_{,a}(\mu_\varphi) \v_{,b}(\mu_\varphi) g_\varphi( \xi_a, \xi_b) \right)  \\
   \nonumber        &&+  \frac{1}{\vv}\left(  \sum_{b=1}^r\v_{,b}(\mu_\varphi) \Delta_0 \mu_0^b + \v_{,b}(\mu_\varphi) d^c F(\xi_b) - \sum_{a,b=1}^r   \frac{ \v_{,a}(\mu_\varphi) \v_{,b}(\mu_\varphi)g_\varphi(\xi_a,\xi_b)}{\vv} \right) \\
  &=& 2 \Lambda_{\varphi}(\Ric(\omega_0)) -  \Delta_\varphi F - \frac{2}{\vv^2} \left( \sum_{a,b=1}^r\v_{,a}(\mu_\varphi) \v_{,b}(\mu_\varphi) g_\varphi( \xi_a, \xi_b) \right)  \\
  \nonumber&&  +   \frac{1}{\vv}\left( \sum_{b=1}^r\v_{,b}(\mu_\varphi) \Delta_0 \mu_0^b + \v_{,b}(\mu_\varphi) d^c F(\xi_b) + \sum_{a,b=1}^r  \v_{,ab}(\mu_\varphi)    g_\varphi(  \xi_a,  \xi_b) \right) 
\end{eqnarray}

We conclude by injecting \eqref{comp:deltav} and \eqref{comp:scal2} in \eqref{def:scalv}:

\begin{eqnarray}{\label{comp:scalv}}
\nonumber     \Scal_\v(\omega_\varphi)&=&
   2 \vv \Lambda_{\varphi}(\Ric(\omega_0)) - \vv \Delta_\varphi F   + \sum_{b=1}^r\v_{,b}(\mu_\varphi) \Delta_0 \mu_0^b + \v_{,b}(\mu_\varphi) d^c F(\xi_b) \\
  \nonumber   &&+ \sum_{a,b=1}^r  \v_{,ab}(\mu_\varphi)    g_\varphi(  \xi_a,  \xi_b) - \frac{2}{\vv} \left( \sum_{a,b=1}^r\v_{,a}(\mu_\varphi) \v_{,b}(\mu_\varphi) g_\varphi( \xi_a, \xi_b) \right)  \\
   \nonumber   && -  2 \sum_{a,b=1}^r \v_{,a}(\mu_\varphi) \Delta_\varphi \mu_\varphi^a  - 2\sum_{a,b=1}^r \v_{,ab}(\mu_\varphi)    g_\varphi(  \xi_a,  \xi_b ) + \sum_{a,b=1}^r \v_{,ab}(\mu_\varphi) g_\varphi(\xi_a, \xi_b)   \\
\nonumber   &=& 2 \vv  \Lambda_{\varphi}(\Ric(\omega_0)) - \vv \Delta_\varphi F   + \sum_{b=1}^r\v_{,b}(\mu_\varphi) \Delta_0 \mu_0^b + \v_{,b}(\mu_\varphi) d^c F(\xi_b) \\
 \nonumber     && -  2 \sum_{a=1}^r \v_{,a}(\mu_\varphi) \Delta_\varphi \mu_\varphi^a - \frac{2}{\vv} \left( \sum_{a,b=1}^r\v_{,a}(\mu_\varphi) \v_{,b}(\mu_\varphi) g_\varphi( \xi_a, \xi_b) \right),
 \end{eqnarray}
 where in the last equality we used \eqref{comp:vdeltamu}. We continue the computation:

 \begin{eqnarray}
 \nonumber   \Scal_\v(\omega_\varphi)   &=& \vv 2 \Lambda_{\varphi}(\Ric(\omega_0)) -   \vv \Delta_\varphi F  - \sum_{b=1}^r \v_{,b}(\mu_\varphi) d^c F(\xi_b) - \v_{,b}(\mu_\varphi) \Delta_0 \mu_0^b \\  
  \nonumber   & =& \vv 2 \Lambda_{\varphi}(\Ric(\omega_0)) +   d^*(\vv dF) - \sum_{b=1}^r\v_{,b}(\mu_\varphi) \Delta_0 \mu_0^b \\
    \nonumber    &=&  \v(\mu_\varphi)\left(- \Delta_{\varphi,\v} F + 2 \Lambda_{\varphi,\v}  \Ric(\omega_0 )\right) \\
 \nonumber       &=&2\v(\mu_\varphi) \Lambda_{\varphi,\v}(\Ric_\v(\omega_\varphi)),
\end{eqnarray}  
 In the last line we use \eqref{weighted trace} for the expression of $\Lambda_{\varphi,\v}  \Ric(\omega_0 )$, the definition of the $\v$-weighted Laplacian in \eqref{def weighted lapl} and that $\Delta_{\varphi,\v}f=\Lambda_{\varphi, \v}dd^c f$ (see Appendix \ref{a:weighted:lap}). 
\end{proof}


We then conclude this section:

\begin{prop}{\label{p:weighted:syst}}
A $\T$-invariant K\"ahler metric is a weighted cscK metric if and only if it is solution of the following system of elliptic PDEs
 
 \begin{equation}\label{weighted system}
     \begin{cases}
     F &= \log\left( \v(\mu_\varphi) \frac{\omega_\varphi^n}{\omega^n_0}\right) \\
    \Delta_{\varphi,\v} F  &=  - \frac{\w ( \mu_\varphi)}{\v(\mu_\varphi)} + 2 \Lambda_{\varphi,\v} ( \Ric(\omega_0 )),
     \end{cases}
\end{equation}\noindent where $\Delta_{\varphi,\v}$ and $\Lambda_{\varphi,\v}$ are introduced in Appendix \ref{s:appen}.
\end{prop}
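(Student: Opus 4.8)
The plan is to obtain the equivalence as an essentially formal consequence of the reformulation of the weighted scalar curvature contained in Lemma \ref{l:scalv}, combined with the standard transformation rule for the Ricci form under a change of K\"ahler potential; no computation beyond what has already appeared in this section is needed.

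First, given a $\T$-invariant K\"ahler potential $\varphi$, I would simply \emph{define} $F:=\log\big(\v(\mu_\varphi)\,\omega_\varphi^n/\omega_0^n\big)$, which is exactly the first line of \eqref{weighted system}; note that $F$ is $\T$-invariant because $\varphi$, and hence $\mu_\varphi$, is. Using the normalization $\Ric(\omega_\varphi)=\Ric(\omega_0)-\tfrac12 dd^c\log(\omega_\varphi^n/\omega_0^n)$ together with the definition \eqref{v:ricci} of the weighted Ricci form, one rewrites
\[
\Ric_\v(\omega_\varphi)=\Ric(\omega_0)-\tfrac12 dd^c\log\!\Big(\v(\mu_\varphi)\tfrac{\omega_\varphi^n}{\omega_0^n}\Big)=\Ric(\omega_0)-\tfrac12 dd^c F .
\]
Applying the $\v$-weighted trace $\Lambda_{\varphi,\v}$, using its linearity and the identity $\Delta_{\varphi,\v}f=\Lambda_{\varphi,\v}dd^cf$ from Appendix \ref{a:weighted:lap}, and inserting the result into the definition \eqref{def:scalv2}, that is $\Scal_\v(\omega_\varphi)=2\v(\mu_\varphi)\Lambda_{\varphi,\v}(\Ric_\v(\omega_\varphi))$, I obtain
\[
\Scal_\v(\omega_\varphi)=2\v(\mu_\varphi)\Lambda_{\varphi,\v}(\Ric(\omega_0))-\v(\mu_\varphi)\,\Delta_{\varphi,\v}F ,
\]
which is precisely the third-to-last displayed equation in the proof of Lemma \ref{l:scalv}.

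Next I would compare this with the weighted cscK equation \eqref{wcsck}. Since $\v>0$ on $P$ and $\mu_\varphi(X)=P$ by the normalization \eqref{normalizing-moment-map}, the function $x\mapsto\v(\mu_\varphi(x))$ is strictly positive on $X$, so one may divide by it: $\Scal_\v(\omega_\varphi)=\w(\mu_\varphi)$ holds if and only if
\[
\Delta_{\varphi,\v}F=-\frac{\w(\mu_\varphi)}{\v(\mu_\varphi)}+2\Lambda_{\varphi,\v}(\Ric(\omega_0)),
\]
which is exactly the second line of \eqref{weighted system}. This gives both implications simultaneously. To finish, I would record that \eqref{weighted system} is indeed a system of elliptic PDEs: read as an equation for $\varphi$, the first line is of complex Monge--Amp\`ere type, while, read as an equation for $F$, the second line is linear with principal operator $\Delta_{\varphi,\v}$, which is elliptic by Lemma \ref{lemma weighted Laplacian}.

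I do not expect a genuine obstacle: the argument is short and the only point requiring care is bookkeeping, namely keeping the weighted operators and the curvature normalizations consistent. In particular one must track the factor $2$ coming from our convention that $\Scal$ is twice the trace of the Ricci form, the passage from $\Ric(\omega_\varphi)$ to $\Ric_\v(\omega_\varphi)$ via \eqref{v:ricci}, and the identity $\Delta_{\varphi,\v}f=\Lambda_{\varphi,\v}dd^cf$ together with the definition of $\Lambda_{\varphi,\v}$ recalled in the Appendix. Once these conventions are in place the equivalence is immediate.
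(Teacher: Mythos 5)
Your proof is correct and takes essentially the same route as the paper: both reduce the statement to the identity $\Scal_\v(\omega_\varphi)=\v(\mu_\varphi)\bigl(-\Delta_{\varphi,\v}F+2\Lambda_{\varphi,\v}(\Ric(\omega_0))\bigr)$ --- which the paper simply quotes from the end of the proof of Lemma \ref{l:scalv} --- and then divide by $\v(\mu_\varphi)>0$. The only point worth flagging is that the ``linearity'' of $\Lambda_{\varphi,\v}$ you invoke makes sense only after the moment maps of $\Ric(\omega_0)$ and $dd^cF$ are normalized compatibly (as fixed in Appendix \ref{s:appen}, since the weighted trace depends on this choice); with those conventions your formal unwrapping of \eqref{def:scalv2} reproduces exactly the displayed identity in the proof of Lemma \ref{l:scalv}, so the argument is sound.
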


\begin{proof}
From the end of the proof of Lemma \ref{l:scalv} we have that $\frac{1}{\v(\mu_\varphi)}\Scal_\v(\omega_\varphi)=- \Delta_{\varphi,\v} F + 2 \Lambda_{\varphi,\v}( \Ric(\omega_0))$. Then $\Scal_\v(\omega_\varphi)=\w(\mu_\varphi)$ if and only if the second equation in \eqref{weighted system} is satisfied.
\end{proof}

\section{$\mathcal{C}^0$-estimates}{\label{s:c0-estimates}}
The goal of this section is to prove a priori $C^0$-estimates for $\varphi$ and $F$, solutions of \eqref{weighted system} where $\varphi$ is normalized such that $\sup_X \varphi=0$. 

First, observe that since $P$ is compact and $\v$, $\w$ are smooth, there exist positive constants $\eta$, $L$, $\nu$, $M$ such that

\begin{equation}{\label{bound:v}}
\begin{split}
&0 <\eta \leq \v \leq 1 \leq L<+\infty \text{, }  \quad \quad -\nu \leq \w \leq M, \\
&-\eta \leq \v_{,j} \leq L<+\infty,\quad   \quad -\nu \leq \w_{,j}\leq M.
\end{split}
\end{equation}
Let $\psi$ be the unique solution of the Monge-Amp\`ere equation
$$\omega_\psi^{[n]}= b^{-1} \sqrt{F^2+1}  \omega_\varphi^{[n]}, \quad \sup_X \psi = 0, $$
\noindent $b:= \int_X \sqrt{F^2+1}\,\omega_\varphi^{[n]}$. We normalize $\omega_0$ such that 

\begin{equation}{\label{norma-omega0}}
    \int_X \omega_0^{[n]}=1,
\end{equation}
where we recall that $\omega_0^{[k]}:= \frac{\omega_0^k}{k!}$, for $1 \leq k \leq n$. Set $m_\v:=\int_X e^F \omega_0^{[n]}= \int_X \v(\mu_\varphi) \omega_\varphi^{[n]}$, so that $\frac{\v(\mu_\varphi) \omega_\varphi^{[n]}}{m_\v}$ is a probability measure. \\
Observe that, since $F^2+1\leq 2 F^2$ on $\{ F\geq 1\}$,

\begin{equation*}
\begin{split}
0<b= &\int_{\{F<1\}} \v(\mu_\varphi)^{-1} e^F\sqrt{F^2+1}\,\omega_0^{[n]} + \int_{\{F\geq 1\}}\v(\mu_\varphi )^{-1}  e^F\sqrt{F^2+1}\,\omega_0^{[n]} \\\leq &\sqrt{2}  \eta^{-1}(e + \Ent_{\v}(\varphi))
\end{split}
\end{equation*}
where $\Ent_\v(\varphi):= \int_X \log\left( \frac{\v(\mu_\varphi) \omega_\varphi^n}{m_\v \omega_0^n} \right) \frac{\v(\mu_\varphi) \omega_\varphi^{[n]}}{{m_\v}}$ denotes the \emph{weighted entropy}, between the probability measures $\frac{\v(\mu_\varphi) \omega_\varphi^n}{m_\v}$ and $\omega_0^n$. By Jensen inequality $\Ent_\v(\varphi) \geq 0$.\\
We observe that the latter is comparable to the classical entropy $\Ent(\varphi):=\Ent_1(\varphi)$:

\begin{lemma}
 For any weight $\v>0$, the weighted entropy $\Ent_\v$ is bounded if and only if the entropy $\Ent$ is:
 \begin{equation*}
     \Ent_\v(\varphi) < \infty \hspace*{0.3cm} \Leftrightarrow  \hspace*{0.3cm} \Ent(\varphi) < \infty,
 \end{equation*}
for any $\varphi \in \mathcal{K}(X,\omega_0)^\T$.
 \end{lemma}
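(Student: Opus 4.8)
The plan is to reduce the statement to the elementary fact that $\v(\mu_\varphi)$ is bounded from above and below by positive constants. Indeed, $\v$ is smooth (in particular continuous) and strictly positive on the compact polytope $P$, and $\mu_\varphi(X)=P$, so with $\eta:=\min_P\v>0$ and $L:=\max_P\v<\infty$ we have $\eta\le\v(\mu_\varphi)\le L$ pointwise on $X$. Since $\int_X\omega_\varphi^{[n]}=\int_X\omega_0^{[n]}=1$ by \eqref{norma-omega0}, this gives $\eta\le m_\v\le L$, and both $|\log\v(\mu_\varphi)|$ and $|\log m_\v|$ are bounded by $\max(|\log\eta|,|\log L|)$.

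Next I would expand the weighted entropy. Abbreviating $h:=\log\frac{\omega_\varphi^n}{\omega_0^n}$, a direct computation using $\int_X\v(\mu_\varphi)\,\omega_\varphi^{[n]}=m_\v$ gives
\[
\Ent_\v(\varphi)=\frac{1}{m_\v}\int_X\v(\mu_\varphi)\log\v(\mu_\varphi)\,\omega_\varphi^{[n]}+\frac{1}{m_\v}\int_X\v(\mu_\varphi)\,h\,\omega_\varphi^{[n]}-\log m_\v .
\]
The first term is the integral of a bounded function (note $t\log t$ is bounded on $[\eta,L]$) against the probability measure $\omega_\varphi^{[n]}$, hence finite, and the last term is finite as well. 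Therefore $\Ent_\v(\varphi)<\infty$ if and only if $\int_X\v(\mu_\varphi)\,h\,\omega_\varphi^{[n]}$ is finite; and likewise, taking $\v\equiv1$ (for which the analogous identity reads $\Ent(\varphi)=\int_Xh\,\omega_\varphi^{[n]}$), $\Ent(\varphi)<\infty$ if and only if $\int_Xh\,\omega_\varphi^{[n]}$ is finite.

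Then I would control the negative parts of these integrals uniformly, so that finiteness is governed only by the positive part $\int_Xh_+\,\omega_\varphi^{[n]}$. Since $\int_Xe^h\,\omega_0^{[n]}=\int_X\omega_\varphi^{[n]}=1$ and the function $x\mapsto(-x)e^x$ satisfies $0\le(-x)e^x\le 1/e$ on $(-\infty,0]$, we get $\int_Xh_-\,\omega_\varphi^{[n]}=\int_{\{h<0\}}(-h)e^h\,\omega_0^{[n]}\le 1/e$, and consequently $0\le\int_X\v(\mu_\varphi)\,h_-\,\omega_\varphi^{[n]}\le L/e$. Hence both $\int_X\v(\mu_\varphi)\,h\,\omega_\varphi^{[n]}$ and $\int_Xh\,\omega_\varphi^{[n]}$ are unambiguously defined in $(-\infty,+\infty]$ and finite precisely when $\int_Xh_+\,\omega_\varphi^{[n]}<\infty$, because $\eta\,h_+\le\v(\mu_\varphi)\,h_+\le L\,h_+$. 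Combining with the previous paragraph yields $\Ent_\v(\varphi)<\infty \Leftrightarrow \int_Xh_+\,\omega_\varphi^{[n]}<\infty \Leftrightarrow \Ent(\varphi)<\infty$.

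The argument is pure bookkeeping with elementary inequalities, so there is no genuine obstacle; the only point needing a bit of care is checking that all the auxiliary quantities — $\int_X\v(\mu_\varphi)\log\v(\mu_\varphi)\,\omega_\varphi^{[n]}$, $\log m_\v$, and the negative part $\int_Xh_-\,\omega_\varphi^{[n]}$ — are automatically finite, which is exactly what lets one conclude that the finiteness of each entropy is detected by the single quantity $\int_Xh_+\,\omega_\varphi^{[n]}$. The same estimates in fact give a two-sided bound relating $\Ent_\v(\varphi)$ and $\Ent(\varphi)$ up to additive and multiplicative constants depending only on $\eta$ and $L$.
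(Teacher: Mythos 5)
Your proposal is correct and follows essentially the same route as the paper: the same expansion of $\Ent_\v$ into the weight term, the term $\int_X \v(\mu_\varphi)\log\frac{\omega_\varphi^n}{\omega_0^n}\,\omega_\varphi^{[n]}$ and $-\log m_\v$, combined with the uniform bounds $\eta\le\v\le L$, $\eta\le m_\v\le L$ and the elementary bound $x\log x\ge -e^{-1}$ on $[0,1]$ (your $(-x)e^x\le e^{-1}$ for $x\le 0$). The only difference is cosmetic: by reducing everything to the finiteness of $\int_X h_+\,\omega_\varphi^{[n]}$ you get both implications simultaneously, whereas the paper proves one direction and notes the converse is similar.
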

\begin{proof}
Suppose first that the entropy $\Ent(\varphi)$ is bounded. For simplicity, we write $g:=\e^F$ and $f:=\frac{g}{\v(\mu_\varphi)}$. Then $g=\frac{\v(\mu_\varphi)\omega_\varphi^{[n]}}{\omega_0^{[n]}}$. Since $\v \log \v \leq L \log L$ and $- e^{-1}\leq x\log x \leq 0$ for any $x\in [0, 1]$, we find
\begin{equation*}
    \begin{split}
        \Ent_\v(\varphi)= &\frac{1}{m_\v}\int_X g \log g \,\omega_0^{[n]} -\log m_\v\\
        = & \frac{1}{m_\v} \int_X \v(\mu_\varphi) f \log (\v(\mu_\varphi) f) \,\omega_0^{[n]} -\log m_\v\\
        =& \frac{1}{m_\v} \int_X \v(\mu_\varphi) \log (\v(\mu_\varphi) ) f \,\omega_0^{[n]} +\frac{1}{m_\v} \int_X \v(\mu_\varphi) f \log f \,\omega_0^{[n]} -\log m_\v\\
        \leq & \frac{L \log L}{m_\v}  \int_X  \omega_\varphi^{[n]} + \frac{\eta}{m_\v}\int_{\{f \leq 1\}} f \log f \,\omega_0^{[n]}+ \frac{L}{m_\v}\int_{\{f> 1\}} f \log f \,\omega_0^{[n]}  -\log m_\v\\
        \leq  & \frac{L \log L}{m_\v} + \frac{L}{m_\v} \int_{X} f \log f \,\omega_0^{[n]} + \frac{L}{m_\v}\int_{\{f\leq 1\}} f (-\log f ) \,\omega_0^{[n]}-\log m_\v \\
        \leq  & \frac{L \log L}{m_\v} + \frac{L}{m_\v} \Ent(\varphi) + \frac{L}{m_\v} e^{-1}-\log m_\v 
        \end{split}
\end{equation*}
where we use \eqref{norma-omega0} to ensure that $\int_X  \omega_\varphi^{[n]} =1$ and $L$ is defined in \eqref{bound:v}. The converse is obtain via similar computations.
\end{proof}

Therefore, if $\Ent(\varphi)$ is uniformly bounded, so is $b$.\\
 
Let $A_0>0$ such that $-A_0 \omega_0 \leq \Ric(\omega_0) \leq A_0 \omega_0 $. We start with the following key observation:

\begin{equation}{\label{bound-trace-weight}}
\begin{split}
|\Lambda_{\v, \varphi} \Ric(\omega_0)|&=\left|\Lambda_{\varphi}(\Ric(\omega_0)) -\frac{1}{2\v(\mu_\varphi)}\sum_{a=1}^r \v_{,a}(\mu_\varphi) \Delta_0 \mu_0^a \right|\\
 &\leq A_0 \Lambda_{ \varphi} (\omega_0) +\sup_X \left| \frac{1}{2\v(\mu_\varphi)}\sum_{a=1}^r \v_{,a}(\mu_\varphi) \Delta_0 \mu_0^a \right|\\
 &\leq A_0 \Lambda_{ \varphi} (\omega_0) +C.
\end{split}
\end{equation}
It is important to note that the constant $C$ above does not depend on $\varphi$ by \eqref{bound:v}. \\
Following \cite{DD21} we show:

\begin{theorem}\label{step 1 C0 weighted}
Given $\varepsilon\in(0,1)$, there exists $C=C(\varepsilon, n, \omega_0, P, \w, \v, b)$ such that
$$F+\varepsilon \psi- A\varphi\leq C,$$
where $A>0$ is a uniform constant depending only on the lower bound of the Ricci curvature.
\end{theorem}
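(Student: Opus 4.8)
The plan is to apply Theorem \ref{thm: uniform estimate} (the generalized Kołodziej estimate) to the potential $\psi$, which solves the auxiliary Monge--Ampère equation $\omega_\psi^{[n]}= b^{-1}\sqrt{F^2+1}\,\omega_\varphi^{[n]}$, and then feed the resulting bound on $\psi$ back into an estimate for the quantity $F+\varepsilon\psi - A\varphi$. The starting point is to observe that $F + \varepsilon\psi - A\varphi$ attains its maximum at some point $x_0 \in X$, and to try to control $F(x_0)$ there. To do this I would work with the function $u := F + \varepsilon\psi - A\varphi$ itself, or more precisely exploit that since $\sup_X\varphi=0$ and $\sup_X\psi=0$, bounding $F$ from above pointwise by a uniform constant plus $A\varphi - \varepsilon\psi$ reduces everything to (i) a uniform upper bound for $F$ in terms of $\varphi,\psi$ and (ii) a uniform lower bound for $\psi$.

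For step (ii), the key is that the density $b^{-1}\sqrt{F^2+1}$ of $\omega_\psi^{[n]}$ with respect to $\omega_\varphi^{[n]}$ is not obviously $L^p$ against $\omega_0^{[n]}$ — but we can rewrite $\omega_\psi^{[n]} = b^{-1}\sqrt{F^2+1}\, e^{F}\v(\mu_\varphi)^{-1}\omega_0^{[n]}$, and since $x\mapsto \sqrt{x^2+1}\,e^{x}$ has at most exponential growth and $\v^{-1}\le \eta^{-1}$ by \eqref{bound:v}, the density is dominated by $C\, e^{2F}\,\omega_0^{[n]}$ on $\{F\ge 1\}$ and is bounded on $\{F<1\}$. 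So the density lies in every $L^p(\omega_0^{[n]})$ once we know $\int_X e^{pF}\omega_0^{[n]}<\infty$ — but that is exactly what the weighted entropy bound does \emph{not} directly give. The honest route is instead to use the capacity estimate \eqref{eq: volume cap domination}: one shows $\int_E b^{-1}\sqrt{F^2+1}\,\omega_\varphi^{[n]} \le A\,[\capa_\chi(E)]^2$ for a suitable $\chi$, using the entropy bound to control the measure $\v(\mu_\varphi)\omega_\varphi^{[n]}$ via a standard ``entropy dominates capacity'' argument (Hölder plus Skoda, Theorem \ref{thm: Skoda}), together with the fact that $b$ is bounded below and $\sqrt{F^2+1}$ is integrable. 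Since here $\int_X\omega_\psi^{[n]} = V$, we have $P[\psi]=0$, so Theorem \ref{thm: uniform estimate} applies with $\chi=0$ and yields $\psi \ge -C$ for a uniform $C$.

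For step (i), the upper bound on $F$: from the first equation of \eqref{weighted system}, $F = \log(\v(\mu_\varphi)\omega_\varphi^n/\omega_0^n)$, so controlling $F$ from above amounts to controlling $\omega_\varphi^n/\omega_0^n$ from above, equivalently to an $L^\infty$ bound on $\varphi$ via Theorem \ref{thm: Kol} — but that is circular. Instead, following \cite{DD21}, I would run a maximum-principle / integration-by-parts argument directly on the PDE for $F$: at the maximum point $x_0$ of $F+\varepsilon\psi-A\varphi$, one has $dd^c(F+\varepsilon\psi-A\varphi)\le 0$, hence $\Lambda_{\varphi,\v}$ of this is $\le 0$; using the second equation of \eqref{weighted system} to substitute $\Delta_{\varphi,\v}F = -\w(\mu_\varphi)/\v(\mu_\varphi) + 2\Lambda_{\varphi,\v}\Ric(\omega_0)$, the bound \eqref{bound-trace-weight} on $|\Lambda_{\varphi,\v}\Ric(\omega_0)|$ in terms of $\Lambda_\varphi(\omega_0)$, and the comparison $\varepsilon\Lambda_{\varphi,\v}(dd^c\psi) = \varepsilon\Lambda_{\varphi,\v}(\omega_\psi - \omega_0) \ge -\varepsilon\Lambda_{\varphi,\v}\omega_0 + \varepsilon\,n\,(\det)^{1/n}$-type inequalities (AM--GM on the weighted trace against $\omega_\psi^{[n]}/\omega_\varphi^{[n]} = b^{-1}\sqrt{F^2+1}$), one extracts at $x_0$ an inequality of the form $e^{F/n}\,(\text{something}) \le C(1+\Lambda_\varphi\omega_0)$ that, combined with $\Lambda_\varphi(\omega_0)\cdot(\omega_\varphi^n/\omega_0^n)^{1/n} \gtrsim$ const, closes into $F(x_0)\le C$. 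Then at any point, $F \le F(x_0) + \varepsilon\psi(x_0) - A\varphi(x_0) - \varepsilon\psi + A\varphi \le C - \varepsilon\psi + A\varphi$, and using $\psi\ge -C$ and $\varphi\le 0$ gives the claimed bound $F+\varepsilon\psi - A\varphi\le C$.

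The main obstacle I expect is step (ii) done correctly: verifying the capacity-domination hypothesis \eqref{eq: volume cap domination} for the density $b^{-1}\sqrt{F^2+1}\,e^F\v(\mu_\varphi)^{-1}$ using only the entropy bound, since the natural density $e^F$ is merely in $L^1\log L$ and not a priori in any $L^p$; this is precisely where the pluripotential-theoretic input of \cite{DD21} (entropy compactness / the $\capa_\chi$-based uniform estimate of \cite{DDL4}) is essential, and where the weight $\v$ has to be carried along carefully — though by \eqref{bound:v} it only ever contributes harmless multiplicative constants $\eta^{-1}, L$. A secondary subtlety is making the maximum-principle computation in step (i) genuinely uniform: all the weighted-operator error terms (the $\v_{,a}, \v_{,ab}$ contributions appearing in \eqref{formul:scal1}, \eqref{comp:scal2}) must be absorbed using compactness of $P$ and the bounds \eqref{bound:v}, so that the final constant $C$ depends only on $\varepsilon, n, \omega_0, P, \v, \w, b$ as asserted.
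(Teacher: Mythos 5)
Your step (i) (the maximum-principle computation on $H=F+\varepsilon\psi-A\varphi$ using the second equation of \eqref{weighted system}, the bound \eqref{bound-trace-weight} and the mixed Monge--Amp\`ere inequality $\omega_\psi\wedge\omega_\varphi^{[n-1]}\geq b^{-1/n}(F^2+1)^{1/2n}\omega_\varphi^{[n]}$) is essentially the paper's argument and yields $F(x_0)\leq C_0$ at the maximum point. The pluripotential part, however, has a genuine gap. You propose to apply Theorem \ref{thm: uniform estimate} to $u=\psi$ with $\chi=0$ in order to get a \emph{uniform} lower bound $\psi\geq -C$ at this stage, verifying the hypotheses ``from the entropy bound via H\"older plus Skoda''. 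This cannot work as stated: the density of $\omega_\psi^{[n]}$ with respect to $\omega_0^{[n]}$ is $b^{-1}\v(\mu_\varphi)^{-1}e^F\sqrt{F^2+1}$, which at this point is controlled only in an $L\log L$ sense, while Theorem \ref{thm: uniform estimate} explicitly requires $f\in L^p$ for some $p>1$ \emph{and} the domination $\int_E f\omega_0^{n}\leq A[\capa_\chi(E)]^2$. Entropy does not give either: combining an entropy-type tail estimate with the volume--capacity inequality $\vol_{\omega_0}(E)\leq \exp(-C/\capa_{\omega_0}(E)^{1/n})$ only produces a bound of order $\capa_{\omega_0}(E)^{1/n}$, which is much weaker than $\capa_{\omega_0}(E)^2$ for small capacity. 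Indeed, in this scheme the uniform bound on $\psi$ is a \emph{consequence} of the theorem (it appears in Corollary \ref{coro-covid-19}, after $F\leq C-\varepsilon\psi$ combined with Skoda gives $e^F\sqrt{F^2+1}\in L^2$), so assuming it here is circular. The paper's route deliberately avoids bounding $\psi$: it applies Theorem \ref{thm: uniform estimate} to $u=\varphi$ with $\chi=\delta\psi$, after splitting $\omega_\varphi^{n}\leq L\,a\,\omega_{\delta\psi}^{n}+L e^{b/(a\delta^n)}\omega_0^{n}$, so that the density $f$ is a \emph{constant} (trivially in every $L^p$, with the capacity domination coming from the standard $\omega_0$-volume--capacity inequality); this yields $\varphi\geq\delta\psi-C_4$, hence $\varepsilon\psi-A\varphi\leq C_3$ once $\delta$ is chosen with $\varepsilon-A\delta\geq 0$.

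There is a second, related gap in your final assembly. From $H(x)\leq H(x_0)$ and $F(x_0)\leq C_0$ you still need to control $\varepsilon\psi(x_0)-A\varphi(x_0)$; the term $\varepsilon\psi(x_0)\leq 0$ is harmless, but $-A\varphi(x_0)\geq 0$ is a priori unbounded, and neither ``$\psi\geq-C$'' nor ``$\varphi\leq 0$'' gives an upper bound for it. What is needed is precisely a lower bound for $\varphi$ relative to $\psi$ (the inequality $\varphi\geq\delta\psi-C_4$ above), or a uniform lower bound for $\varphi$ -- which again would require $e^F\in L^p$, not yet available. So the missing ingredient in your proposal is exactly the paper's claim $\varepsilon\psi-A\varphi\leq C_3$, proved by the splitting trick; without it the maximum-principle step alone does not close.
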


\begin{proof}
We let $H:=F+\varepsilon \psi-A \varphi$ with $A=2A_0+1$. 
Then
\begin{equation*}
\begin{split}
\Delta_{\varphi, \v} H=&\Delta_{\varphi, \v} F+\varepsilon \Delta_{\varphi, \v} \psi   - A  \Delta_{\varphi, \v} \varphi \\
  =&-\frac{\w\left(\mu_\varphi\right)}{\v(\mu_\varphi)}+2\Lambda_{ \varphi, \v}\Ric(\omega_0) +\varepsilon  \Delta_{\varphi} \psi-A  \Delta_{\varphi} \varphi \\
  &+\frac{1}{\v(\mu_\varphi)}\sum_{a=1}^r \v_{,a}\left(\mu_\varphi\right)\left(\varepsilon d^c(\psi)\left(\xi_a\right)-A d^c \varphi\left(\xi_a\right)\right) \\
=&-\frac{\w\left(\mu_\varphi\right)}{\v(\mu_\varphi)}+2\Lambda_{ \varphi, \v}\Ric(\omega_0) +\varepsilon  \frac{d d^c \psi \wedge \omega_{\varphi}^{[n-1]}}{\omega_\varphi^{[n]}} -  A  \frac{d d^c \varphi \wedge \omega_\varphi^{[n-1]}}{\omega_\varphi^{[n]}} \\ & +\frac{1}{\v(\mu_\varphi)}\sum_{a=1}^r  \v_{,a}\left(\mu_\varphi\right)\left(\varepsilon d^c \psi\left(\xi_a\right)-A d^c \varphi\left(\xi_a\right)\right) \\
\geq &-\frac{\w\left(\mu_\varphi\right)}{\v(\mu_\varphi)}- 2 A_0 \Lambda_{\varphi}(\omega_0) -C +\varepsilon  \frac{d d^c \psi \wedge \omega_{\varphi}^{[n-1]}}{\omega_\varphi^{[n]}}  -  A  \frac{d d^c \varphi \wedge \omega_\varphi^{[n-1]}}{\omega_\varphi^{[n]}}  \\ &  +\sum_{a=1}^r \frac{1}{\v(\mu_\varphi)} \v_{,a}\left(\mu_\varphi\right)\left(\varepsilon d^c \psi\left(\xi_a\right)-A d^c \varphi\left(\xi_a\right)\right).
\end{split}
\end{equation*}

For the inequality, we use \eqref{bound-trace-weight}. We continue the computation
\begin{equation*}
\begin{split}
\Delta_{\varphi, \v} H \geq & -\frac{\w\left(\mu_\varphi\right)}{\v(\mu_\varphi)} -  2 A_0 \Lambda_\varphi (\omega_0)-C  + \varepsilon \frac{\left(\omega_\psi-\omega_0\right) \wedge \omega_{\varphi}^{[n-1]}}{\omega_\varphi^{[n]}} \\
&- A  \frac{\left(\omega_{\varphi}-\omega_0\right) \wedge \omega_{\varphi}^{[n-1]}}{\omega_\varphi^{[n]}}  +\frac{1}{\v(\mu_\varphi)}\sum_{a=1}^r  \v_{,a}\left(\mu_\varphi\right)\left(\varepsilon d^c \psi\left(\xi_a\right)-A d^c \varphi\left(\xi_a\right)\right) \\  
\geq & -\frac{M}{L} - A n - C  + (A - 2A_0 - \varepsilon)\Lambda_\varphi (\omega_0) +\varepsilon  \frac{\omega_\psi \wedge \omega_\varphi^{[n-1]}}{\omega_\varphi^{[n]}}   \\ 
&  +\frac{1}{\v(\mu_\varphi)}\sum_{a=1}^r  \v_{,a}\left(\mu_\varphi\right)\left(\varepsilon d^c \psi\left(\xi_a\right)-A d^c \varphi\left(\xi_a\right)\right) \\
 \geqslant& -C_1 + n b^{-1/n} \varepsilon\left(F^2+1\right)^{\frac{1}{2 n}} + \frac{1}{\v(\mu_\varphi)} \sum_{a=1}^r \v_{,a} \left(\mu_\varphi\right)\left(\varepsilon\left(\mu_\psi^a-\mu_0^a\right)-A\left(\mu_\varphi^a-\mu_0^a\right)\right)\\
  \geqslant& - C_1 + C_2 +n b^{-1/n}\varepsilon\left(F^2+1\right)^{\frac{1}{2 n}} 
\end{split}
\end{equation*}
where $C_1$ depends on $\eqref{bound-trace-weight}$, $C_2:=\inf_X \sum_{a=1}^r \frac{\v_{,a}\left(\mu_\varphi\right)}{\v(\mu_\varphi)} \left(\varepsilon\left(\mu_\psi^a-\mu_0^a\right)-A\left(\mu_\varphi^a-\mu_0^a\right)\right)$. Observe that the constant $C_2$ is indeed independent of $\varphi$ and $\psi$ since the image of $\mu_\varphi$ is the moment polytope $P$ for any $\T$-invariant K\"ahler potential in $[\omega_0]$ (see the introduction and \cite[Lemma 1]{Lah19} for details).

From the second inequality to the third, we use the mixed Monge-Amp\`ere inequality (see \cite[Proposition 1.11]{BEGZ}) ensuring that $\omega_\psi \wedge \omega_\varphi^{n-1} \geq b^{-1/n} (\sqrt{F^2+1})^{1/n}  \,\omega_\varphi^n$. We also used that the moment map $\mu_\varphi$ of a K\"ahler potential $\varphi$ satisfies $\mu_\varphi^a = \mu_0^a - d^c\varphi(\xi^a)$ for any $\xi^a \in \mathfrak{t}$.

By the maximum principle, applied to $H$, we can then infer that at a maximum point $x_0$ we have
$$n b^{-1/n}\varepsilon (F^2+1)^{1/2n} (x_0)\leq C . $$ Thus $F(x_0)\leq C_0$, $C_0=C_0(\varepsilon, A_0, \omega_0, b, \v, \w)$.

\smallskip
We then claim that $$\varepsilon \psi- A\varphi \leq C_3,$$
where $C_3>0$ depends on $\varepsilon$, $A$ and $b$. Let us now prove the claim. First of all we observe that, for any $a, \delta\in(0,1)$ we have either $\sqrt{F^2+1} \geq b/ (a\delta^n)$ or $F\leq \sqrt{F^2+1} \leq b/ (a\delta^n)$; thus

\begin{equation*}
    \begin{split}
\omega_\varphi^{n}=&\v(\mu_\varphi) e^F \omega_0^{n} \leq  L a \delta^n b^{-1}e^F \sqrt{F^2+1}\, \omega_0^{n} + L e^{\frac{b}{a\delta^n}}\omega_0^{n} \\
=& La \delta^n \omega_\psi^{n} + Le^{\frac{b}{a\delta^n}}\omega_0^{n} \\
\leq& L a\omega_{\delta \psi}^{n} + L e^{\frac{b}{a\delta^n}}\omega_0^{n}.        
    \end{split}
\end{equation*}
We are going to apply Theorem \ref{thm: uniform estimate} with $u=\varphi$, $\chi=\delta \psi$ and $f=Le^{b/(a\delta^n)}$. In fact, we have that $e^{b/(a\delta^n)} \in L^p$, for any $p\geq 1$ and, since $\int_X \omega_\varphi^{[n]}=1$, \cite[Theorem 1.3]{DDL2} implies that $P[\varphi]=0\geq \delta \psi$. In particular $P[\varphi]$ is less singular than $\delta \psi$. Moreover, the assumption in \eqref{eq: volume cap domination} is satisfied. Indeed, for any Borel set $E\subset X$
$$\int_E f\omega_0^{n}= Le^{\frac{b}{a\delta^n}} \vol_{\omega_0}(E) \leq L e^{\frac{b}{a\delta^n}} \exp\left( \frac{-C_4}{\capa_{\omega_0}(E)^{1/n}}\right) \leq L e^{\frac{b}{a\delta^n}} C_4 \capa_{\omega_0}(E)^{2}.$$
The inequality
$$ \vol_{\omega_0}(E) \leq \exp\left( \frac{-C_4}{\capa_{\omega_0}(E)^{1/n}}\right) $$where $C_4>0$ depends on $n$ and $\omega_0$, follows from \cite[eq. (12.1.3) and Lemma 12.2]{GZ17} (see also \cite[Proposition 2.10]{DDL1}). Using that
$\capa_{\omega_0}\leq (1-\delta)^{-n}\capa_{(1-\delta)\omega} $ and that $\capa_{(1-\delta)\omega_0} \leq \capa_{\delta \psi}$ (see \cite[Lemma 2.7]{DL14}) we get that
$$ \int_E f\omega_0^{n} \leq C_4 (1-\delta)^{-2n} \capa_{\delta \psi}(E)^2.$$
We can then infer that $\varphi \geq \delta \psi - C_4((1-a)^{-1}, e^{b/a\delta^n}, (1-\delta)^{-2n})$. Choosing $\delta$ small enough so that $\varepsilon-A \delta \geq0$ we obtain the claim with $C_3= A\,C_4$. 

It then follows that for any $x\in X$
$$H(x)\leq H(x_0)\leq C_0+C_3,$$
which concludes the proof.
\end{proof}

\begin{corollary}\label{coro-covid-19}
The functions $\psi, \varphi, F$ are uniformly bounded by a constant that only depends on $n,\omega_0$, $\v$, $\w$ and $\Ent(\varphi)$.
\end{corollary}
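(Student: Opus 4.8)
The plan is to upgrade the one-sided estimate of Theorem \ref{step 1 C0 weighted} into genuine two-sided $C^0$-bounds, by first controlling $\psi$, then $F$ from above, then $\varphi$, and finally $F$ from below. Throughout I use that $\varphi$ and $\psi$ are normalised by $\sup_X\varphi=\sup_X\psi=0$, so $\varphi\le0$ and $\psi\le0$, that $1\le b\le C$ with $C$ depending only on $n,\omega_0,\v$ and $\Ent(\varphi)$ (by the estimate on $b$ preceding Theorem \ref{step 1 C0 weighted}, together with the comparison of $\Ent_\v(\varphi)$ and $\Ent(\varphi)$), and the bounds \eqref{bound:v} and \eqref{bound-trace-weight}.

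First I would fix $\varepsilon\in(0,1)$ small, depending only on $\nu_{\omega_0}$, so that $2\varepsilon p<2\nu_{\omega_0}^{-1}$ for some $p>1$. Theorem \ref{step 1 C0 weighted} gives $F+\varepsilon\psi-A\varphi\le C$, and since $-A\varphi\ge0$ this already yields $F+\varepsilon\psi\le C$, i.e.\ $e^F\le Ce^{-\varepsilon\psi}$. Inserting this into $\omega_\psi^{[n]}=b^{-1}\sqrt{F^2+1}\,\v(\mu_\varphi)\,e^F\,\omega_0^{[n]}$ and absorbing the polynomial factor $\sqrt{F^2+1}$ into the exponential (on $\{F\ge0\}$ one has $\sqrt{F^2+1}\le F+1\le C-\varepsilon\psi$, while on $\{F<0\}$ the product $\sqrt{F^2+1}\,e^F$ is bounded), using also $b^{-1}\le1$ and $\v\le1$, I obtain $\omega_\psi^{[n]}\le Ce^{-2\varepsilon\psi}\,\omega_0^{[n]}$. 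The uniform Skoda estimate (Theorem \ref{thm: Skoda}) then shows that $Ce^{-2\varepsilon\psi}$ has $L^p$-norm bounded independently of $\varphi$, so Ko{\l}odziej's theorem (Theorem \ref{thm: Kol}) gives $\Osc_X\psi\le C$, hence $-C\le\psi\le0$. Plugging this back into $F+\varepsilon\psi\le C$ yields the upper bound $F\le C$; consequently $\omega_\varphi^{[n]}=\v(\mu_\varphi)\,e^F\,\omega_0^{[n]}\le e^C\,\omega_0^{[n]}$, and a second application of Theorem \ref{thm: Kol} gives $\Osc_X\varphi\le C$, hence $-C\le\varphi\le0$.

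It remains to bound $F$ from below, for which I would apply the maximum principle to $H:=-F-\lambda\varphi$ with $\lambda:=2A_0+1$, where $-A_0\omega_0\le\Ric(\omega_0)\le A_0\omega_0$; note $-\lambda\varphi$ is already uniformly bounded by the previous step. Using the second equation of \eqref{weighted system} for $\Delta_{\varphi,\v}(-F)$, the estimate \eqref{bound-trace-weight}, the bounds \eqref{bound:v}, the identity $\Delta_\varphi\varphi=n-\Lambda_\varphi(\omega_0)$, and the fact that $\sum_a\frac{\v_{,a}(\mu_\varphi)}{\v(\mu_\varphi)}d^c\varphi(\xi_a)$ is uniformly bounded since $d^c\varphi(\xi_a)=\mu_\varphi^a-\mu_0^a$ stays in the fixed polytope $P$, one computes $\Delta_{\varphi,\v}H\ge(\lambda-2A_0)\Lambda_\varphi(\omega_0)-C=\Lambda_\varphi(\omega_0)-C$. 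By the arithmetic-geometric mean inequality $\Lambda_\varphi(\omega_0)\ge n\big(\omega_\varphi^n/\omega_0^n\big)^{-1/n}=n\big(\v(\mu_\varphi)e^F\big)^{-1/n}\ge n\,e^{-F/n}$, so $\Delta_{\varphi,\v}H\ge n\,e^{-F/n}-C$. At a point $x_0$ where $H$ attains its maximum one has $\Delta_{\varphi,\v}H(x_0)\le0$ (recall $\Delta_{\varphi,\v}f=\Lambda_{\varphi,\v}(dd^c f)$ and $\Lambda_{\varphi,\v}$ is a positive operator, cf.\ Lemma \ref{lemma weighted Laplacian}), which forces $F(x_0)\ge-C$; therefore $\max_X H\le C$, and since $-\lambda\varphi\ge0$ this gives $F\ge-C$ on all of $X$. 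Tracking the dependencies, all the constants involved depend only on $n,\omega_0,\v,\w$ and $\Ent(\varphi)$.

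The step I expect to be the main obstacle is the passage through $\psi$: one must fix $\varepsilon$ compatibly with the Skoda threshold $2\nu_{\omega_0}^{-1}$ from the outset, and then absorb the $\sqrt{F^2+1}$ factor carefully enough that the Monge-Amp\`ere density governing $\psi$ has a right-hand side whose $L^p$-norm is uniformly controlled; only then do Theorems \ref{thm: Skoda} and \ref{thm: Kol} close the loop from $\psi$ to $F$ to $\varphi$, after which the lower bound on $F$ is a routine maximum-principle argument built on the second equation of the system.
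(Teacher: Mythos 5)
Your proposal is correct and follows essentially the same route as the paper: the upper bounds come from Theorem \ref{step 1 C0 weighted} combined with Skoda (Theorem \ref{thm: Skoda}) and Ko{\l}odziej (Theorem \ref{thm: Kol}) applied to the Monge--Amp\`ere equations for $\psi$ and $\varphi$, and the lower bound on $F$ is exactly the paper's minimum-principle argument applied to $F+(2A_0+1)\varphi$ (your $H=-F-\lambda\varphi$ is the same computation with the sign flipped), the only differences being the harmless reordering ($\psi$, then $F$, then $\varphi$, versus the paper's $\varphi$, $\psi$, $F$). One small correction: since $F=\log\bigl(\v(\mu_\varphi)\,\omega_\varphi^n/\omega_0^n\bigr)$, the densities are $\omega_\varphi^{[n]}=\frac{e^F}{\v(\mu_\varphi)}\,\omega_0^{[n]}$ and $\omega_\psi^{[n]}=b^{-1}\sqrt{F^2+1}\,\frac{e^F}{\v(\mu_\varphi)}\,\omega_0^{[n]}$, not $\v(\mu_\varphi)e^F$ as you wrote, but this does not affect your estimates since $\eta\leq\v\leq L$ by \eqref{bound:v}.
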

\begin{proof}
From Theorem \ref{step 1 C0 weighted} we know that $F\leq C-\varepsilon\psi+A\varphi \leq C-\varepsilon \psi$,
since $\sup_X \varphi=0$. Therefore
$$\int_X e^{2F} \omega_0^{n} \leq \tilde{C} \int_X e^{-2\varepsilon\psi} \omega_0^{n}.$$ 

Choosing $\varepsilon < \nu_{\omega_0}^{-1}$, by Theorem \ref{thm: Skoda} we get a uniform bound for $\|e^F\|_{L^2}$. It follows from Ko{\l}odziej uniform estimates (Theorem \ref{thm: Kol}), applied to the equation $\omega_\varphi^{n}=\frac{1}{\vv} e^F\omega_0^{n}$, that $\varphi \geq -C(\v,\|e^F\|_{L^2},\omega_0)$. In particular, since $\sup_X\varphi=0$ we do get a uniform control on $\|\varphi\|_{L^{\infty}}.$ Also,
$$ \int_X e^{2F} (F^2+1) \,\omega_0^{n} \leq  \int_X e^{4F}  \,\omega_0^{n}  \leq C' \int_X e^{-4\varepsilon\psi} \omega_0^{n}.$$ Once again, thanks to Theorem \ref{thm: Skoda}, choosing $\varepsilon \leq (2\nu_{\omega_0})^{-1}$ we get a uniform bound for $\|e^F\sqrt{F^2+1}\|_{L^2}$. Theorem \ref{thm: Kol} then gives a uniform control for $\|\psi\|_{L^{\infty}}.$ \\
We can then conclude from Theorem \ref{step 1 C0 weighted} together with the arguments above that $$F\leq C-\varepsilon\psi +A\varphi\leq -\varepsilon\inf_X \psi \leq C_4$$ for some uniform positive constant $C_4$. \\
It remains to prove a uniform lower bound for $F$. For this purpose we apply the minimum principle to $F+t\varphi$, with $t=2A_0+1$ where we recall that $A_0>0$ is such that $\Ric(\omega_0 )\leq A_0\omega_0 $. Using Definition \eqref{weighted trace}, Lemma \ref{lemma weighted Laplacian} and \eqref{bound-trace-weight}, we find

\begin{equation*}
    \begin{split}
        \Delta_{ \varphi, \v} (F + t\varphi) =& - \frac{\w(\mu_\varphi)}{\v(\mu_\varphi)} + 2 \Lambda_{\varphi, \v}(\Ric(\omega_0)) + t\Delta_\varphi \varphi + \frac{t}{\v(\mu_\varphi)}\sum_{a=1}^r \v_{,a}(\mu_\varphi)d^c\varphi(\xi_a)  \\   
       \leq & - \frac{\w(\mu_\varphi)}{\v(\mu_\varphi)} + 2   \Lambda_{\varphi}(\Ric(\omega_0)) + \tilde{C} + t ( n -  \Lambda_\varphi(\omega_0)) 
       + \frac{t}{\v(\mu_\varphi)} \sum_{a=1}^r \v_{,a}(\mu_\varphi)(\mu_\varphi^a-\mu_0^a) \\
       \leq & \frac{\nu}{\eta} + t n +(2A_0- t)\Lambda_\varphi (\omega_0) + \tilde{C}
       + \frac{t}{\v(\mu_\varphi)} \sum_{a=1}^r \v_{,a}(\mu_\varphi)(\mu_\varphi^a-\mu_0^a) \\
       \leq &  C -  \Lambda_\varphi (\omega_0)
               \\
           \leq  &  C -  \eta^{\frac{1}{n}}n e^{-F/n}, 
    \end{split}
\end{equation*}

\noindent where $C = \sup_X\frac{1}{\v(\mu_\varphi)}\left(  t \sum_{a=1}^r \v_{,a}(\mu_\varphi)(\mu_\varphi^a-\mu_0^a)\right)+\nu + t  n + \tilde{C}$. It is worth to note again that the constant $C$ is indeed independent of $\varphi$ since the image of $\mu_\varphi$ is the moment polytope $P$ for any $\T$-invariant K\"ahler potential in $\varphi \in \mathcal{K}(X,\omega_0)^\T$. \\
The last inequality follows from the fact that, given two positive $(1,1)$-forms $\alpha, \beta$ we have $\Lambda_\beta(\alpha) \geq n \big(\frac{\alpha^n}{\beta^n}\big)^{1/n}$. In particular 

\begin{equation}{\label{bound:trace}}
\Lambda_\varphi(\omega_0) \geq n \v(\mu_\varphi)^{1/n} e^{-F/n}.
\end{equation}
Now, let $x_0$ be a minimum point of the function $F+t\varphi$, then $0\leq C-\eta^{\frac{1}{n}}\, n e^{-F(x_0)/n}$, or equivalently $F(x_0) \geq -n \log(C \eta^{-\frac{1}{n}}/ n)$. For any $x\in X$, $F(x)+t\varphi(x) \geq F(x_0)+t\varphi(x_0)$, hence $F\geq -n \log(C\eta^{-\frac{1}{n}}/ n) -t \|\varphi\|_{L^{\infty}}.$
\end{proof}

\section{Integral $C^2$-estimates}{\label{s:integral:c2:esti}}
The goal is this section is to show the theorem below.

\begin{theorem}\label{thm:integral}
Assume $ \v$ is log-concave. Let $\varphi$ be a solution of \eqref{weighted system}. Then for any $p\geq 1$; there exists a constant $C>0$, depending on $p$, n, $\v$, $\|\varphi\|_{C^0}$, $\|F\|_{C^0}$, an upper bound on the Ricci form of $\omega_0$ and a lower bound of the holomorphic bisectional curvature of $\omega_0$ so that

\begin{equation*}
    \| \Lambda_0 (\omega_\varphi)\|_{L^p} \leq C.
\end{equation*}
\end{theorem}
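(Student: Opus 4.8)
\textbf{Proof strategy for Theorem \ref{thm:integral}.}

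The plan is to run a Moser-type iteration on a power of the trace $\Lambda_0(\omega_\varphi)$, using the weighted Laplacian $\Delta_{\varphi,\v}$ (which is elliptic and for which integration by parts against $\v(\mu_\varphi)\omega_\varphi^{[n]}$ is available) as the natural operator, and feeding in the $C^0$-bounds on $\varphi$ and $F$ already obtained in Corollary \ref{coro-covid-19}. First I would record the Chern--Lu / Aubin--Yau type inequality: there is a constant $B>0$, depending only on a lower bound of the holomorphic bisectional curvature of $\omega_0$, such that
\begin{equation*}
\Delta_{\varphi}\log \Lambda_0(\omega_\varphi) \geq -B\,\Lambda_\varphi(\omega_0) - \frac{\Lambda_0(\Ric(\omega_\varphi))}{\Lambda_0(\omega_\varphi)} .
\end{equation*}
Rewriting $\Ric(\omega_\varphi)$ via the first equation of \eqref{weighted system} (i.e. $\Ric(\omega_\varphi)=\Ric(\omega_0)-\tfrac12 dd^c F + \tfrac12 dd^c\log\v(\mu_\varphi)$) and passing to the weighted Laplacian, the terms involving $F$ and $\mu_\varphi$ are either controlled by $\|F\|_{C^0}$ or, after one integration by parts, by $\int_X |dF|_\varphi^2\,\omega_\varphi^n$, which in turn is controlled because $\int_X \Delta_{\varphi,\v} F \cdot F\,\v(\mu_\varphi)\omega_\varphi^{[n]}$ is bounded by the $C^0$ data through the second equation of \eqref{weighted system} and \eqref{bound-trace-weight}. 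The log-concavity of $\v$ is what I expect to be needed to handle the Hessian-of-$\log\v(\mu_\varphi)$ contribution with the correct sign (the term $\sum \v_{,ab}g_\varphi(\xi_a,\xi_b)$ enters with a favourable sign precisely when $\log\v$ is concave), exactly as flagged after Theorem \hyperref[thm:intro:c2]{B}.

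Next I would set $u = (\Lambda_0(\omega_\varphi) + 1)$ (or $e^{-C\varphi}\Lambda_0(\omega_\varphi)$ with a well-chosen constant $C$, the shift by $-C\varphi$ absorbing the bad $-B\Lambda_\varphi(\omega_0)$ term since $\Delta_{\varphi,\v}\varphi = n - \Lambda_\varphi(\omega_0) + (\text{lower order in } \mu_\varphi)$ and $-\Lambda_\varphi(\omega_0) \le -n\,\v(\mu_\varphi)^{1/n}e^{-F/n}$ is already used in Corollary \ref{coro-covid-19}). Multiplying the differential inequality for $\log u$ by $u^{p}\v(\mu_\varphi)$, integrating over $X$, and integrating by parts yields, after Cauchy--Schwarz, a Sobolev-ready inequality of the shape
\begin{equation*}
\int_X |d(u^{(p+1)/2})|_\varphi^2 \,\v(\mu_\varphi)\,\omega_\varphi^{[n]} \leq C(p+1)\int_X u^{p+1}\,\omega_0^{[n]} + C(p+1)\int_X u^{p}\,(\text{controlled data})\,\omega_0^{[n]}.
\end{equation*}
Here I would use that $\omega_\varphi^n = \v(\mu_\varphi)^{-1}e^F\omega_0^n$ with $\|F\|_{C^0}, \|\varphi\|_{C^0}$ bounded to convert freely between $\omega_\varphi^{[n]}$, $\v(\mu_\varphi)\omega_\varphi^{[n]}$ and $\omega_0^{[n]}$, and the uniform Sobolev inequality for the fixed metric $\omega_0$ (valid on the compact $X$) together with $\Lambda_0(\omega_\varphi)\,\omega_\varphi^{[n]} \ge c\,\omega_0 \wedge \omega_\varphi^{[n-1]}$ to compare $|d\cdot|_\varphi^2\,\omega_\varphi^{[n]}$ with $|d\cdot|_0^2\,\omega_0^{[n]}$ up to a factor of $\Lambda_0(\omega_\varphi)$ — this is the standard device that makes the iteration close: each step gains integrability at the cost of one extra power, which is exactly what a trace estimate can afford. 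Iterating $p \mapsto \kappa p + $const with the Sobolev constant $\kappa = n/(n-1)$ gives $\|\Lambda_0(\omega_\varphi)\|_{L^p} \le C(p)$ for every $p<\infty$, starting from $p=1$ where $\int_X \Lambda_0(\omega_\varphi)\,\omega_0^{[n]} = n + \int_X \Delta_0\varphi\,\omega_0^{[n]} = n$ by Stokes.

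\textbf{Main obstacle.} The delicate point is not the iteration machinery but getting a clean differential inequality for $\Delta_{\varphi,\v}$ applied to (a modification of) $\log\Lambda_0(\omega_\varphi)$ in which every error term is either pointwise bounded by the $C^0$ data or integrates (after one by-parts) against a quantity we already control — and doing so while keeping the $\sum_{a,b}\v_{,ab}(\mu_\varphi)g_\varphi(\xi_a,\xi_b)$ and $\sum_a \v_{,a}(\mu_\varphi)\Delta_\varphi\mu_\varphi^a$ contributions (which appear through $dd^c\log\v(\mu_\varphi)$ and through the definition of $\Delta_{\varphi,\v}$) on the good side of the inequality. The log-concavity hypothesis on $\v$ is precisely the structural input that rescues the sign of the troublesome Hessian term; the $\v_{,a}\Delta_\varphi\mu_\varphi^a$ term I would re-express using \eqref{comp:dcf}--\eqref{comp:vdeltamu} to trade it for $d^cF(\xi_a)$-type expressions plus curvature of $\omega_0$, both of which are then absorbed. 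Once the differential inequality is in this normal form, the Moser iteration is routine given Kołodziej's estimate and the $C^0$ bounds.
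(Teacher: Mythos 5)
Your opening moves are essentially the paper's: the weighted Chen--Lu/CGP-type inequality for $\Delta_{\varphi,\v}\log\Lambda_0(\omega_\varphi)$ with log-concavity of $\v$ used to discard the Hessian-of-$\log\v$ term (this is exactly Lemma \ref{lemma: CGP}), the exponential twist by $\varphi$ (and $F$) to absorb the $-B\Lambda_\varphi(\omega_0)$ term via $\Delta_{\varphi,\v}\varphi=n-\Lambda_\varphi(\omega_0)+\dots$, and integration by parts against $\v(\mu_\varphi)\omega_\varphi^{[n]}$ using self-adjointness of $\Delta_{\varphi,\v}$. The gap is in how you close the estimate. Your mechanism of gain is the $\omega_0$-Sobolev inequality, but the differential inequality only controls $\int_X |d(u^{(p+1)/2})|_\varphi^2\,\v(\mu_\varphi)\omega_\varphi^{[n]}$, and passing to the $\omega_0$-gradient costs a factor of the trace \emph{inside} the gradient integral: pointwise $|dw|_0^2\le \Lambda_0(\omega_\varphi)\,|dw|_\varphi^2$, so $\int|dw|_0^2\,\omega_0^{[n]}\le C\int \Lambda_0(\omega_\varphi)\,|dw|_\varphi^2\,\omega_\varphi^{[n]}$, and this extra $\Lambda_0(\omega_\varphi)$ cannot be moved onto the zeroth-order terms by H\"older without exactly the integrability of $\Lambda_0(\omega_\varphi)$ you are trying to prove (or an uncontrolled re-weighting of the integration by parts). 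The assertion that ``each step gains integrability at the cost of one extra power'' is therefore circular at this stage; this absorption is legitimate later, in the $C^2$ estimate, precisely \emph{because} Theorem \ref{thm:integral} is already available there.

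Moreover, the weaker lower bound you invoke, $\Lambda_\varphi(\omega_0)\ge n\v(\mu_\varphi)^{1/n}e^{-F/n}\ge c_0>0$, is not enough to make even a Sobolev-free argument close: with it, the ``good'' term produced by the twist has the same power $u^{2p+1}$ as the bad terms, and the bad constant cannot be beaten by taking $\lambda$ large, since the bound $\Delta_{\varphi,\v}(F+\lambda\varphi)\le C_3+(2A_0-\lambda)\Lambda_\varphi(\omega_0)$ has $C_3$ growing linearly in $\lambda$. What the paper uses instead is the mixed Monge--Amp\`ere/trace inequality $\Lambda_\varphi(\omega_0)\,\Lambda_0(\omega_\varphi)\ge c\,e^{-F/(n-1)}\Lambda_0(\omega_\varphi)^{1+\frac{1}{n-1}}$ (a consequence of \eqref{weighted system} and \eqref{bound:trace}), which makes the good term carry a strictly larger exponent; the remaining $\Delta_0F$-term is handled by the substitution $G=(1-\gamma)F-\gamma\lambda\varphi$, one integration by parts and Young's inequality (producing only $|du|_0^2$-terms with favourable sign and a $\Lambda_0(\omega_\varphi)$-term of the same power $u^{2p+1}$), and then, with $\gamma\sim p$, the inequality \eqref{big-variant} closes for each fixed $p$ by a single H\"older estimate against the bounded measure $\omega_0^{[n]}$ --- no Sobolev inequality and no iteration. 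To repair your argument you should replace the Sobolev/Moser step by this power-gain plus H\"older mechanism (or find an independent way to control the extra $\Lambda_0(\omega_\varphi)$ factor in the gradient comparison, which is the actual crux).
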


We start with some lemmas true for any weight $\v>0$ which will be useful in the following.

\begin{lemma}{\label{l:trace}}

Let $\varphi$ be solution of $\eqref{weighted system}$. Then 

\begin{equation}{\label{ineq:trace:0phi}}
    \Lambda_0(\omega_\varphi) \leq C \Lambda_\varphi (\omega_0)^{n-1}, \qquad   \Lambda_\varphi(\omega_0) \leq C \Lambda_0(\omega_\varphi)^{n-1},
\end{equation}

\noindent where $C$ is a positive constant depending on $\|F\|_{C^0}$, $\v$ and $n$. Moreover, 

\begin{equation}\label{ineq:trace1}
    \Lambda_0(\omega_\varphi) C_1 \geq 1  \hspace*{2cm}    \Lambda_\varphi(\omega_0) C_2 \geq 1,
\end{equation}

\noindent for positive constants $C_1$, $C_2$ depending on $\|F\|_{C^0}$, $\v$, and $n$.
\end{lemma}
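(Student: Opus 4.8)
The plan is to work pointwise and reduce both pairs of inequalities to elementary symmetric-function estimates for the eigenvalues of $\omega_\varphi$ relative to $\omega_0$, using only the Monge--Amp\`ere density coming from the first equation of \eqref{weighted system} together with the bounds \eqref{bound:v}. Fix $x\in X$ and choose local coordinates in which $\omega_0$ is the standard Hermitian form and $\omega_\varphi$ is diagonal with eigenvalues $\lambda_1,\dots,\lambda_n>0$. Then
$$
\Lambda_0(\omega_\varphi)=\sum_{i=1}^n \lambda_i, \qquad \Lambda_\varphi(\omega_0)=\sum_{i=1}^n \lambda_i^{-1}, \qquad \frac{\omega_\varphi^n}{\omega_0^n}=\prod_{i=1}^n \lambda_i .
$$

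First I would record the uniform control of the density: the first line of \eqref{weighted system} reads $\omega_\varphi^n=\v(\mu_\varphi)^{-1}e^F\omega_0^n$, so by \eqref{bound:v} and the $C^0$-bound on $F$ one gets
$$
e^{-\|F\|_{C^0}}\ \leq\ \prod_{i=1}^n \lambda_i\ \leq\ \eta^{-1}e^{\|F\|_{C^0}}=:D .
$$
Next, for \eqref{ineq:trace:0phi}, write $\lambda_i=\big(\prod_j \lambda_j\big)\prod_{j\neq i}\lambda_j^{-1}$ and bound each of the $n-1$ factors $\lambda_j^{-1}\leq \Lambda_\varphi(\omega_0)$ to obtain $\lambda_i\leq D\,\Lambda_\varphi(\omega_0)^{n-1}$; summing over $i$ gives the first inequality with $C=nD$. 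The second inequality is symmetric: $\lambda_i^{-1}=\big(\prod_j\lambda_j\big)^{-1}\prod_{j\neq i}\lambda_j\leq e^{\|F\|_{C^0}}\,\Lambda_0(\omega_\varphi)^{n-1}$, and summing yields $\Lambda_\varphi(\omega_0)\leq n\,e^{\|F\|_{C^0}}\,\Lambda_0(\omega_\varphi)^{n-1}$.

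Finally, \eqref{ineq:trace1} follows from AM--GM applied to the eigenvalues together with the density bound: $\Lambda_0(\omega_\varphi)=\sum_i\lambda_i\geq n\big(\prod_i\lambda_i\big)^{1/n}\geq n\,e^{-\|F\|_{C^0}/n}$, and likewise $\Lambda_\varphi(\omega_0)=\sum_i\lambda_i^{-1}\geq n\big(\prod_i\lambda_i\big)^{-1/n}\geq n\,D^{-1/n}$; one then takes $C_1,C_2$ to be the reciprocals of these lower bounds. I do not expect any genuine obstacle here: the whole argument is a pointwise linear-algebra computation, and the only analytic input is the uniform bound on $\omega_\varphi^n/\omega_0^n$ provided by the $C^0$-estimates of Section \ref{s:c0-estimates} and the bounds \eqref{bound:v}; in particular log-concavity of $\v$ plays no role in this lemma.
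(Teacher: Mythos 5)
Your argument is correct and is essentially the paper's proof: both reduce the claim to the uniform pointwise control of the density $\omega_\varphi^n/\omega_0^n=e^F/\v(\mu_\varphi)$ coming from the first equation of the system together with the bounds on $\v$ and $\|F\|_{C^0}$, combined with the elementary inequalities $n\big(\tfrac{\alpha^n}{\beta^n}\big)^{1/n}\le \Lambda_\beta(\alpha)\le n\tfrac{\alpha^n}{\beta^n}\Lambda_\alpha(\beta)^{n-1}$. The only difference is presentational: the paper quotes these standard trace--determinant inequalities, whereas you reprove them on the spot by diagonalizing $\omega_\varphi$ against $\omega_0$ and manipulating the eigenvalues.
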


\begin{proof}
Recall that for any smooth $(1,1)$-form $\alpha$, $\beta$

\begin{equation*}
  n\left(\frac{\alpha^n}{\beta^n}\right)^{1/n} \leq  \Lambda_\beta(\alpha) \leq n \frac{\alpha^n}{\beta^n}\Lambda_\alpha(\beta)^{n-1}.
\end{equation*}
By \eqref{weighted system} and the second inequality above, we infer 
$$\Lambda_0(\omega_\varphi) \leq n \frac{e^F}{\v(\mu_\varphi)}  \Lambda_\varphi (\omega_0)^{n-1}. $$
Similarly $\Lambda_\varphi(\omega_0) \leq n \v(\mu_\varphi) {e^{-F}}  \Lambda_0 (\omega_\varphi)^{n-1}. $\\
On the other side, $ \Lambda_0(\omega_\varphi) \geq n \left( \frac{e^F}{\v(\mu_\varphi) } \right)^{1/n}$ and $ \Lambda_\varphi(\omega_0) \geq n \left( {\v(\mu_\varphi)  e^{-F}} \right)^{1/n}$.
\end{proof}

The following lemma is well-known; however, as it will be used very often, we give a statement below without proof.

\begin{lemma}\label{lemma_id grad} 
Let $S$  be a covariant $2$-tensor $S$ and $f_1, f_2$ two smooth functions on $X$. Then
\begin{eqnarray*}
 g_\varphi\big( df_1,g_\varphi(S,df_2)\big) &=& S(\nabla^\varphi  f_1, \nabla^\varphi f_2)
 \end{eqnarray*}
 and 
\begin{equation}\label{eq.grad 2}
 g_\varphi\big(S,  df_1 \otimes df_2\big)= S (\nabla^\varphi f_1, \nabla^\varphi f_2).
  \end{equation}
 In particular, when $S= \nabla^\varphi d h$, for some smooth function $h:X\rightarrow \R$ we have
 \begin{eqnarray*}
 g_\varphi\big( df_1,g_\varphi(\nabla^\varphi  dh,df_2)\big) &=& \nabla^\varphi dh (\nabla^\varphi  f_1, \nabla^\varphi f_2).
 \end{eqnarray*}
\end{lemma}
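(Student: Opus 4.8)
The statement in question is Lemma~\ref{lemma_id grad}, a purely tensorial identity about covariant 2-tensors and gradients. The paper explicitly says it will be stated \emph{without proof} because it is well-known, so writing a full proof goes against the authors' intent — but I can still sketch the natural argument.

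\medskip

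The plan is to verify the identity by unwinding the definitions of the metric pairing $g_\varphi$ on $1$-forms and on $2$-tensors in a local $g_\varphi$-orthonormal frame. First I would recall that $g_\varphi$ induces an inner product on $T^*X$ via the musical isomorphism, so that for a $1$-form $\alpha$ one has $\alpha^\sharp$ defined by $g_\varphi(\alpha^\sharp,\cdot)=\alpha$, and in particular $df^\sharp=\nabla^\varphi f$ for a smooth function $f$. Then for a covariant $2$-tensor $S$, the contraction $g_\varphi(S,df_2)$ appearing in the statement is the $1$-form $Y\mapsto S(Y,\nabla^\varphi f_2)$ (raising the second index). Pairing this $1$-form with $df_1$ via $g_\varphi$ and using $df_1^\sharp=\nabla^\varphi f_1$ gives exactly $S(\nabla^\varphi f_1,\nabla^\varphi f_2)$, which is the first identity. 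Concretely, in an orthonormal frame $(e_k)$ with dual coframe $(e^k)$, write $df_i=\sum_k (e_k f_i)\, e^k$ and $S=\sum_{k,l} S_{kl}\, e^k\otimes e^l$; then $g_\varphi(S,df_2)=\sum_{k,l} S_{kl}(e_l f_2)\, e^k$ and $g_\varphi(df_1, g_\varphi(S,df_2))=\sum_{k,l} S_{kl}(e_k f_1)(e_l f_2)=S(\nabla^\varphi f_1,\nabla^\varphi f_2)$.

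\medskip

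The second identity, $g_\varphi(S, df_1\otimes df_2)=S(\nabla^\varphi f_1,\nabla^\varphi f_2)$, is immediate from the same frame computation: $g_\varphi$ on $2$-tensors is the natural extension $g_\varphi(e^k\otimes e^l, e^p\otimes e^q)=\delta^{kp}\delta^{lq}$ in an orthonormal coframe, so pairing $S=\sum S_{kl} e^k\otimes e^l$ with $df_1\otimes df_2=\sum_{p,q}(e_p f_1)(e_q f_2)\, e^p\otimes e^q$ yields $\sum_{k,l} S_{kl}(e_k f_1)(e_l f_2)$, the same expression. The final ``in particular'' statement is just the case $S=\nabla^\varphi dh$, obtained by substitution with no extra work.

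\medskip

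There is essentially no obstacle here: the only point requiring a little care is keeping track of \emph{which} index of $S$ is raised in the expression $g_\varphi(S,df_2)$, i.e. making sure the convention is consistent between the two displayed formulas (it is — both contract the second slot of $S$ with $\nabla^\varphi f_2$ and the first with $\nabla^\varphi f_1$), and noting that when $S$ is symmetric, as it will be in the applications where $S=\nabla^\varphi dh$, the choice is irrelevant anyway. Hence I would simply record the frame computation above, or, in keeping with the paper's style, omit the proof entirely as the authors have chosen to do.
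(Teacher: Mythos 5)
Your computation is correct: raising the second index of $S$ against $\nabla^\varphi f_2$ and pairing with $df_1$ (equivalently, the orthonormal-frame contraction $\sum_{k,l}S_{kl}(e_kf_1)(e_lf_2)$) gives both displayed identities, and the case $S=\nabla^\varphi dh$ is mere substitution. The paper itself offers no proof — it declares the lemma well known and states it without one — so your routine frame verification is precisely the standard argument the authors chose to omit, and there is nothing to compare beyond that.
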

Let us stress that $ g_\varphi(S,df_2)$ has to be understood as the $1$-form: $ V \rightarrow g_\varphi(S(V,\cdot),df_2)$. 
    

\begin{lemma}{\label{l:diff:trace}}
For any smooth $(1,1)$-form $\beta$ we have the following identities between $1$-forms
  \begin{equation}{\label{diff:of:trace1}} 
   d\Lambda_\varphi(\beta)= \Lambda_\varphi(\nabla^0_{\cdot} \beta) - g_\varphi( \beta, \nabla^0_{\cdot} \omega_\varphi),\qquad d^c\Lambda_\varphi(\beta)= -\Lambda_\varphi(\nabla^0_{J\cdot} \beta) + g_\varphi( \beta, \nabla^0_{J\cdot} \omega_\varphi),\
   \end{equation}

\noindent where $g_\varphi( \beta, \nabla^0_\cdot \omega_\varphi)$ is the $1$-form defined as $V\rightarrow g_\varphi( \beta, \nabla^0_V \omega_\varphi)$ and $\Lambda_\varphi(\nabla^0_\cdot \beta)$ is the $1$-form defined as $V\rightarrow \Lambda_\varphi(\nabla^0_V \beta)$.

\end{lemma}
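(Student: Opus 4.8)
The identities in Lemma \ref{l:diff:trace} are local, pointwise statements about the behaviour of the trace operator $\Lambda_\varphi(\beta) = g_\varphi^{j\bar k}\beta_{j\bar k}$ (in suitable indices) as one differentiates, so the natural approach is to work at an arbitrary point $x_0\in X$ and differentiate the expression $\Lambda_\varphi(\beta) = \langle \omega_\varphi^{-1}, \beta\rangle$ there, using the reference connection $\nabla^0$ of $\omega_0$ as the differentiation tool. The key algebraic fact is that for the metric contraction we have a Leibniz-type rule: if we write $\Lambda_\varphi(\beta)$ as a full contraction of the inverse metric $g_\varphi^{-1}$ against $\beta$, then $\nabla^0_V\big(\Lambda_\varphi(\beta)\big)$ splits into the term where $\nabla^0_V$ hits $\beta$ — giving $\Lambda_\varphi(\nabla^0_V\beta)$ — and the term where it hits $g_\varphi^{-1}$. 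For the second term I would use $\nabla^0_V(g_\varphi^{-1}) = -\,g_\varphi^{-1}\big(\nabla^0_V g_\varphi\big)g_\varphi^{-1}$, so that contracting against $\beta$ produces exactly $-\,g_\varphi(\beta, \nabla^0_V\omega_\varphi)$ in the bilinear-pairing notation spelled out in the statement (here one uses that $\omega_\varphi$ and $g_\varphi$ differ only by the fixed almost complex structure $J$, which is $\nabla^0$-parallel is \emph{not} needed — one just carries $J$ along, or equivalently works with the Hermitian metric directly). This gives the first identity; the $d^c$ version follows by the same computation applied to $d^c = i(\bar\partial - \partial)$, equivalently by replacing $V$ with $JV$ and tracking the sign coming from $d^c f(V) = -df(JV)$, which accounts for the overall minus sign and the $J\cdot$ in the stated formula.

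\textbf{Order of steps.} First I would fix notation: choose $x_0$, and recall that $\Lambda_\varphi(\beta)$ is a smooth function whose differential is computed by feeding tangent vectors into $\nabla^0$. Second, establish the abstract Leibniz rule for differentiating a contraction of $\beta$ against the inverse of the (varying) metric, treating $\beta$, $\omega_\varphi$, $\omega_0$ as the data and $\nabla^0$ as the connection; here it is cleanest to write $\Lambda_\varphi(\beta)$ invariantly as the trace of the endomorphism $\omega_0^{-1}\circ(\text{stuff})$, but in practice a local $\omega_0$-unitary frame makes the two terms transparent. Third, identify the ``inverse-metric'' term with $-g_\varphi(\beta,\nabla^0_\cdot\omega_\varphi)$ by the variation-of-inverse formula. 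Fourth, package the two resulting terms into $1$-forms as in the statement, obtaining the first identity. Fifth, repeat verbatim for $d^c$, or deduce it formally from the first by the substitution $\cdot \mapsto J\cdot$ together with $d^c\Lambda_\varphi(\beta) = -d\Lambda_\varphi(\beta)\circ J$, being careful that the same substitution applied inside $\Lambda_\varphi(\nabla^0_\cdot\beta)$ and $g_\varphi(\beta,\nabla^0_\cdot\omega_\varphi)$ yields precisely the terms written with $\nabla^0_{J\cdot}$.

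\textbf{Main obstacle.} The computation itself is routine; the only real care is bookkeeping — making sure the pairing notation $g_\varphi(\beta,\nabla^0_V\omega_\varphi)$ is interpreted consistently (as emphasised in the remark following Lemma \ref{lemma_id grad}, one must be careful whether $g_\varphi(S,\cdot)$ denotes a $1$-form or a scalar), and making sure the signs in the $d^c$ identity come out right given the paper's convention $d^c = i(\bar\partial - \partial)$ and the resulting sign in $d^cf(V) = -df(JV)$. A secondary subtlety is that $\nabla^0$ is the Levi-Civita connection of $g_0$, not a complex connection, so $\nabla^0_V\omega_\varphi$ need not be of type $(1,1)$; however, since $\beta$ is $(1,1)$ and the trace only sees the $(1,1)$-part, and since $\nabla^0 J = 0$ (the almost complex structure of a Kähler manifold is parallel for the Levi-Civita connection of \emph{any} Kähler metric in particular $g_0$), everything is compatible and no type issues arise. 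I therefore expect the proof to be short, with the bulk of the text devoted to fixing the pairing conventions and carrying out the $d^c$ sign check.
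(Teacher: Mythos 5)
Your proof is correct, but it follows a genuinely different route from the paper's. You differentiate the contraction $\Lambda_\varphi(\beta)=\langle g_\varphi^{-1},\beta\rangle$ directly, splitting $\nabla^0_V$ by the Leibniz rule into the term hitting $\beta$ (giving $\Lambda_\varphi(\nabla^0_V\beta)$) and the term hitting the inverse metric, which you convert via $\nabla^0_V(g_\varphi^{-1})=-g_\varphi^{-1}(\nabla^0_V g_\varphi)g_\varphi^{-1}$ into $-g_\varphi(\beta,\nabla^0_V\omega_\varphi)$; the paper instead differentiates the defining relation $\beta\wedge\omega_\varphi^{[n-1]}=\Lambda_\varphi(\beta)\,\omega_\varphi^{[n]}$ by $\nabla^0_V$ on both sides and then eliminates the wedge term $\beta\wedge\nabla^0_V\omega_\varphi\wedge\omega_\varphi^{[n-2]}$ using the pointwise identity $\Lambda_\varphi(\beta)\Lambda_\varphi(\alpha)\,\omega_\varphi^{[n]}=g_\varphi(\beta,\alpha)\,\omega_\varphi^{[n]}+\beta\wedge\alpha\wedge\omega_\varphi^{[n-2]}$ from \cite[eq.\ (1.12.5)]{Gau}. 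Your version is the more elementary tensor-calculus argument and makes the origin of the $-g_\varphi(\beta,\nabla^0_\cdot\omega_\varphi)$ term completely transparent (it is just the variation of the inverse metric), at the cost of some index bookkeeping; the paper's version stays entirely in the language of wedge products and traces. Both handle the $d^c$ identity identically, via $d^cf=-df(J\cdot)$. One small internal inconsistency in your write-up: you first assert that $\nabla^0$-parallelism of $J$ is ``not needed,'' but then invoke $\nabla^0J=0$ to rule out type issues for $\nabla^0_V\omega_\varphi$; the latter is the honest statement (and it does hold, since $\nabla^0$ is the Levi-Civita connection of the K\"ahler metric $g_0$), so you should simply keep that and drop the earlier parenthetical.
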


\begin{proof}
By definition of the trace

\begin{equation}{\label{tracbeta}}
    \beta \wedge \omega_\varphi^{[n-1]}= \Lambda_\varphi(\beta) \omega_\varphi^{[n]}.
\end{equation}    
Let $V$ be a vector field. Differentiating the LHS of the above equality by $\nabla^0_V$ we get

\begin{equation*}
  \begin{split}
    \nabla^0_V\left(\beta \wedge \omega_\varphi^{[n-1]}\right)
      =& \nabla^0_V \beta  \wedge \omega_\varphi^{[n-1]} + \beta \wedge \nabla^0_V (\omega_\varphi) \wedge \omega_\varphi^{[n-2]} \\
      =&   \Lambda_\varphi(\nabla^0_V \beta) \omega_\varphi^{[n]} +  \beta \wedge \nabla^0_V (\omega_\varphi) \wedge \omega_\varphi^{[n-2]}.
  \end{split}  
\end{equation*}

Differentiating the RHS of \eqref{tracbeta} by $\nabla^0_V$ and re-arranging the terms, we get

\begin{equation*}
    \begin{split}
\nabla^0_V\left(  \Lambda_\varphi \beta\right)\,  \omega_\varphi^{[n]}=&  \Lambda_\varphi(\nabla^0_V \beta) \omega_\varphi^{[n]} + \beta \wedge \nabla^0_V (\omega_\varphi) \wedge \omega_\varphi^{[n-2]} -   \Lambda_\varphi(\beta) \nabla^0_V(\omega_\varphi)\wedge \omega_\varphi^{[n-1]} \\
=&\Lambda_\varphi\left(\nabla^0_V\beta\right) \omega_\varphi^{[n]} + \beta \wedge \nabla^0_V (\omega_\varphi) \wedge \omega_\varphi^{[n-2]} -   \Lambda_\varphi(\beta) \Lambda_\varphi(\nabla^0_V(\omega_\varphi))\omega_\varphi^{[n]} \\
=&\Lambda_\varphi\left(\nabla^0_V\beta\right) \omega_\varphi^{[n]} - g_\varphi( \beta, \nabla^0_V \omega_\varphi)\omega_\varphi^{[n]},
    \end{split}
\end{equation*}
where to pass to the last line we use the well know identity $ \Lambda_\varphi(\beta) \Lambda_\varphi(\alpha)\omega_\varphi^{[n]}=  g_\varphi( \beta, \alpha)\omega_\varphi^{[n]} + \beta\wedge \alpha \wedge \omega_\varphi^{[n-2]}$  (see e.g. \cite[eq. (1.12.5)]{Gau}), for any $(1,1)$-forms $\alpha, \beta$. \\
This conclude the proof of the first identity, since $d  \Lambda_\varphi(\beta)(V)=\nabla^0_V\left(\Lambda_\varphi(\beta)\right)$. The second one simply follows from the first one and the  that $d^cf =-df(J\cdot) $ for any smooth function $f$.
\end{proof}

The following lemma is a generalization of Yau inequality \cite[(2.10)]{Yau78} for $F$ solution of the weighted Monge-Ampère equation \eqref{weighted system} and for the weighted Laplacian.

\begin{lemma}{\label{l:yau}}
The following holds true:
\begin{equation}{\label{yau:estim3}}
\begin{split}
    \Delta_{\varphi, \v} \Lambda_{0}(\omega_\varphi) \geq& \Delta_{0} F  - \langle \hess( \log \v )(\mu_\varphi), g_0(d\mu_\varphi,d\mu_\varphi)\rangle
     \\
   &  +  |\nabla^0\omega_\varphi|^2_{g_0\otimes g_\varphi}- C \Lambda_0(\omega_\varphi)\Lambda_\varphi(\omega_0) -  C,
\end{split}    
\end{equation}
where $C$ depends on $\omega_0$, $\|F\|_{C^0}$, $\v$  and a lower bound for the holomorphic bisectional curvature of $\omega_0$. In particular, 
\begin{equation}{\label{yau:estim2}}
    \Delta_{\varphi,\v} \Lambda_{0}(\omega_\varphi) \geq \Delta_{0} F +  |\nabla^0\omega_\varphi|^2_{g_0\otimes g_\varphi} - C_1 \Lambda_0(\omega_\varphi)^{n}- C_2.
\end{equation}

\noindent Moreover, if $\v$ is log-concave, then 

\begin{equation}{\label{yau:estim}}
    \Delta_{\varphi, \v} \Lambda_{0}(\omega_\varphi) \geq \Delta_{0} F  
    +  |\nabla^0\omega_\varphi|^2_{g_0\otimes g_\varphi}- C \Lambda_0(\omega_\varphi)\Lambda_\varphi(\omega_0) -  C,   
\end{equation}

\end{lemma}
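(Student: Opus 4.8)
The plan is to run the classical Bochner--Yau computation for $\Delta_\varphi \Lambda_0(\omega_\varphi)$ and then correct it by the extra first-order term coming from the weighted Laplacian $\Delta_{\varphi,\v} = \Delta_\varphi + \v(\mu_\varphi)^{-1}\sum_a \v_{,a}(\mu_\varphi)\, d^c(\cdot)(\xi_a)$. First I would recall the well-known Chern--Lu / Yau-type inequality in the form
\[
\Delta_\varphi \Lambda_0(\omega_\varphi) \geq \Delta_0 F + |\nabla^0\omega_\varphi|^2_{g_0\otimes g_\varphi} - C\,\Lambda_0(\omega_\varphi)\Lambda_\varphi(\omega_0) - C,
\]
using that along the system \eqref{weighted system} one has $\Delta_\varphi \log(\omega_\varphi^n/\omega_0^n) = \Delta_0 F - \Delta_\varphi\log\v(\mu_\varphi)$ plus terms controlled via the bound on $\Ric(\omega_0)$ and the lower bound on the holomorphic bisectional curvature of $\omega_0$ (as in \cite{Yau78,Au76}); here $C$ depends only on $\omega_0$, $\|F\|_{C^0}$, $\v$ and that curvature lower bound. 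The $\log\v(\mu_\varphi)$ contribution has to be unwound carefully: expanding $\Delta_\varphi\log\v(\mu_\varphi)$ as in the computations around \eqref{comp:1}--\eqref{comp:deltav} produces exactly a $\langle \hess(\log\v)(\mu_\varphi), g_0(d\mu_\varphi,d\mu_\varphi)\rangle$-type term (modulo swapping $g_\varphi$ for $g_0$, which costs only a factor comparable to $\Lambda_0(\omega_\varphi)\Lambda_\varphi(\omega_0)$ and is absorbed into $C$) together with a square term $\v^{-2}|d\v(\mu_\varphi)|^2 \geq 0$ that can simply be dropped.

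Next I would compute the first-order correction $\v(\mu_\varphi)^{-1}\sum_a \v_{,a}(\mu_\varphi)\, d^c\big(\Lambda_0(\omega_\varphi)\big)(\xi_a)$. Using Lemma \ref{l:diff:trace} with $\beta = \omega_0$, this equals $\v(\mu_\varphi)^{-1}\sum_a \v_{,a}(\mu_\varphi)\big(-\Lambda_\varphi(\nabla^0_{J\xi_a}\omega_0) + g_\varphi(\omega_0,\nabla^0_{J\xi_a}\omega_\varphi)\big)$. The first piece is bounded by $C\Lambda_\varphi(\omega_0)$ since $\nabla^0\omega_0$ and $\xi_a$ are fixed data, hence absorbed. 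The second piece is the genuinely dangerous term: it contains $\nabla^0\omega_\varphi$ linearly, and must be absorbed into the good term $|\nabla^0\omega_\varphi|^2_{g_0\otimes g_\varphi}$. Pointwise Cauchy--Schwarz gives $|g_\varphi(\omega_0,\nabla^0_{J\xi_a}\omega_\varphi)| \leq |\nabla^0\omega_\varphi|_{g_0\otimes g_\varphi}\cdot C\,\Lambda_\varphi(\omega_0)^{1/2}\,|\xi_a|$ (the $\Lambda_\varphi(\omega_0)^{1/2}$ coming from measuring $\omega_0$ and $\xi_a$ in $g_\varphi$), so by Young's inequality this is $\leq \tfrac12 |\nabla^0\omega_\varphi|^2_{g_0\otimes g_\varphi} + C\,\Lambda_\varphi(\omega_0)$; but we only get to keep the full $|\nabla^0\omega_\varphi|^2$ term if the Bochner term came with a coefficient $\geq 1$, which it does here. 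After these absorptions I land on \eqref{yau:estim3}.

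From \eqref{yau:estim3}, \eqref{yau:estim2} follows by invoking Lemma \ref{l:trace}: $\Lambda_0(\omega_\varphi)\Lambda_\varphi(\omega_0) \leq C\,\Lambda_0(\omega_\varphi)\cdot\Lambda_0(\omega_\varphi)^{n-1} = C\,\Lambda_0(\omega_\varphi)^n$, and the Hessian term is bounded below by $-C\,|d\mu_\varphi|^2_{g_0} \geq -C\,\Lambda_0(\omega_\varphi)$ using that $\mu_\varphi$ takes values in the fixed compact polytope $P$ and $d^c\varphi(\xi_a) = \mu_0^a - \mu_\varphi^a$ so $|d\mu_\varphi|_{g_0}$ is controlled by $\Lambda_0(\omega_\varphi)^{1/2}$ times a constant; then $\Lambda_0(\omega_\varphi) \leq \tfrac12\Lambda_0(\omega_\varphi)^n + C$. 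Finally \eqref{yau:estim} is just \eqref{yau:estim3} with the Hessian term dropped: log-concavity of $\v$ means $\hess(\log\v)\leq 0$, so $\langle \hess(\log\v)(\mu_\varphi), g_0(d\mu_\varphi,d\mu_\varphi)\rangle \leq 0$ since $g_0(d\mu_\varphi,d\mu_\varphi)$ is a positive semidefinite symmetric matrix in the $\mathfrak t$-indices, hence $-\langle\hess(\log\v),\cdot\rangle \geq 0$ and can be discarded. The main obstacle I anticipate is the bookkeeping in the weighted Bochner identity: keeping precise track of which terms are $O(\Lambda_0\Lambda_\varphi)$, which are $O(\Lambda_\varphi)$, and which are the absorbable cross terms with $\nabla^0\omega_\varphi$, while making sure the coefficient in front of $|\nabla^0\omega_\varphi|^2_{g_0\otimes g_\varphi}$ survives as exactly $1$ after all the Young-inequality absorptions.
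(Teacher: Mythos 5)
Your overall skeleton (Yau's identity for $\Delta_\varphi\Lambda_0(\omega_\varphi)$ plus the first--order correction coming from $\Delta_{\varphi,\v}$) is the paper's, but you miss the one idea the proof actually turns on, and the absorptions you propose in its place do not close. The term produced by Yau's formula is $\Delta_0\log(\omega_\varphi^n/\omega_0^n)=\Delta_0F-\Delta_0\log\v(\mu_\varphi)$ (your identity ``$\Delta_\varphi\log(\omega_\varphi^n/\omega_0^n)=\Delta_0F-\Delta_\varphi\log\v(\mu_\varphi)$'' mixes the two Laplacians), and the dangerous piece $-\sum_a(\log\v)_{,a}(\mu_\varphi)\Delta_0\mu_\varphi^a$ contained in $-\Delta_0\log\v(\mu_\varphi)$ is cancelled \emph{exactly} by the weighted correction: since $d\Lambda_0(\omega_\varphi)=\Lambda_0(\nabla^0\omega_\varphi)$ and $\nabla^0_{J\xi_a}\omega_\varphi=\mathcal{L}_{J\xi_a}\omega_\varphi-2(\nabla^0 J\xi_a\lrcorner\,\omega_\varphi)^{\rm skw}$, one has $g_\varphi\big(d\log\v(\mu_\varphi),d\Lambda_0(\omega_\varphi)\big)=\sum_a(\log\v)_{,a}(\mu_\varphi)\Delta_0\mu_\varphi^a+O(\Lambda_0(\omega_\varphi))$, which by \eqref{eq: lap v} reproduces $\Delta_0\log\v(\mu_\varphi)-\langle\hess(\log\v)(\mu_\varphi),g_0(d\mu_\varphi,d\mu_\varphi)\rangle$; after the cancellation only the $g_0$-Hessian term and an error $\leq C\Lambda_0(\omega_\varphi)\leq C\Lambda_0(\omega_\varphi)\Lambda_\varphi(\omega_0)$ survive, which is \eqref{yau:estim3}. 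You instead (a) absorb the whole correction term into $\tfrac12|\nabla^0\omega_\varphi|^2_{g_0\otimes g_\varphi}$ by Cauchy--Schwarz/Young, and (b) claim that expanding the $\log\v$ part of Yau's term yields only a Hessian (with the wrong metric, ``swapped at cost $C\Lambda_0\Lambda_\varphi$'') plus a droppable square. Both steps fail: the Cauchy--Schwarz constant is not $\Lambda_\varphi(\omega_0)^{1/2}$ but of order $\Lambda_\varphi(\omega_0)$ (or $\Lambda_0(\omega_\varphi)$ if one measures the contraction against $g_0$), so Young leaves an error of order $\Lambda_\varphi(\omega_0)^2$ or $\Lambda_0(\omega_\varphi)^2$, and neither is $\leq C\Lambda_0(\omega_\varphi)\Lambda_\varphi(\omega_0)+C$ (there is no pointwise bound $\Lambda_\varphi(\omega_0)\leq C\Lambda_0(\omega_\varphi)$ or vice versa, only the $(n-1)$-power bounds of Lemma \ref{l:trace}); this budget is not cosmetic, since Lemma \ref{lemma: CGP} and Theorem \ref{thm:integral} require that, after dividing by $\Lambda_0(\omega_\varphi)$, the error be of size $\Lambda_\varphi(\omega_0)$. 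Moreover the chain-rule expansion of $\Delta\log\v(\mu_\varphi)$ (with either Laplacian) also contains $\sum_a(\log\v)_{,a}\Delta\mu_\varphi^a$, a term linear in second derivatives of $\omega_\varphi$ that your proposal never addresses --- it is exactly the term that must be cancelled, not estimated. Finally, for \eqref{yau:estim} the Hessian must be contracted with $g_0(d\mu_\varphi,d\mu_\varphi)$ on the nose: an indefinite error created by swapping $g_\varphi$ for $g_0$ cannot be discarded by log-concavity.

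Two further technical points. Lemma \ref{l:diff:trace} applied with $\beta=\omega_0$ computes $d^c\Lambda_\varphi(\omega_0)$, not the quantity you need, $d^c\Lambda_0(\omega_\varphi)$; the correct identity is the one with the roles of the two metrics exchanged, $d\Lambda_0(\omega_\varphi)=\Lambda_0(\nabla^0\omega_\varphi)$, the other term vanishing because $\nabla^0\omega_0=0$. Also, in your passage to \eqref{yau:estim2}, $|d\mu^a_\varphi|_{g_0}=|\xi_a\lrcorner\,\omega_\varphi|_{g_0}$ is of order $\Lambda_0(\omega_\varphi)$, not $\Lambda_0(\omega_\varphi)^{1/2}$ (the polytope bounds $\mu_\varphi$, not $d\mu_\varphi$); the correct bound $-C\Lambda_0(\omega_\varphi)^2\geq-C\Lambda_0(\omega_\varphi)^n$ still gives \eqref{yau:estim2}, and your deduction of \eqref{yau:estim} from \eqref{yau:estim3} via $\hess(\log\v)\leq 0$ is correct and is the paper's argument.
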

In the above Lemma, 
$$ |\nabla^0\omega_\varphi|^2_{g_0\otimes g_\varphi}= \sum_{k=1}^{n}|\nabla^0_{e_k} \omega_\varphi|^2_{g_\varphi}+\sum_{k=1}^{n}|\nabla^0_{Je_k} \omega_\varphi|^2_{g_\varphi}$$
where $(e_k,Je_k)_{k=1}^{n}$ in a orthonormal frame with respect to $g_0$.



\begin{proof}
Following Yau's computations \cite[eq. (2.7) and (2.9)]{Yau78}, the Laplacian with respect to $\omega_\varphi$ of the trace $\Lambda_0(\omega_\varphi)$ is given by 
\begin{align}\label{eq:Delta-tr0}
 \begin{split}   \Delta_{\varphi}(\Lambda_0 (\omega_\varphi))
    =& \Delta_0\log\left(\frac{\omega_\varphi^n}{\omega_0^n}\right) + |\nabla^0\omega_\varphi|_{g_0\otimes g_\varphi}^2\\
    &+ \sum_{i, j=1}^n \Big(R^0(e_i,e_j,e_j,e_i)+R^0(Je_i,e_j,e_j,Je_i)\Big)\frac{|e_j|_\varphi^2-|e_i|_\varphi^2}{|e_i|_\varphi^2}\\
    =&\Delta_0 F-\Delta_0 \log \v(\mu_\varphi) + |\nabla^0\omega_\varphi|_{g_0\otimes g_\varphi}^2\\
   & + \sum_{i, j=1}^n \Big(R^0(e_i,e_j,e_j,e_i)+R^0(Je_i,e_j,e_j,Je_i)\Big)\frac{|e_j|_\varphi^2-|e_i|_\varphi^2}{|e_i|_\varphi^2}
    \end{split}
\end{align}

where $R^0$ is the curvature tensor of the reference metric $g_0$. The $\v$-weighted Laplacian of $\Lambda_0 (\omega_\varphi)$ is given by
\begin{equation}
\begin{split}\label{eq:Delta_v_0}
    \Delta_{\varphi,\v}(\Lambda_0 (\omega_\varphi))=&  \Delta_\varphi(\Lambda_0 (\omega_\varphi))+ g_\varphi(d(\log\v(\mu_\varphi)),d\Lambda_0(\omega_\varphi))\\
    =&\Delta_\varphi(\Lambda_0 (\omega_\varphi))+g_\varphi \left(d(\log\v(\mu_\varphi)),\Lambda_0(\nabla^0\omega_\varphi \right))
\end{split}
\end{equation}
where the second equality follows from Lemma \ref{l:diff:trace}: 
\[
d\Lambda_0(\omega_\varphi)=\Lambda_0(\nabla^0\omega_\varphi)-g_0(\omega_\varphi,\nabla^0\omega_0)=\Lambda_0(\nabla^0\omega_\varphi).
\]
In a basis $(\xi_a)_{a=1,\cdots, r}$ of $\tor$ the second term in \eqref{eq:Delta_v_0} is given by:
\[
\begin{split}
    g_\varphi \left(d(\log\v(\mu_\varphi)),\Lambda_0(\nabla^0\omega_\varphi) \right)=&\sum_{a=1}^r(\log\v)_{,a}(\mu_\varphi)g_\varphi(d\mu_\varphi^a,\Lambda_0(\nabla^0\omega_\varphi))\\
    =&-\sum_{a=1}^r(\log\v)_{,a}(\mu_\varphi) \Lambda_0(\nabla^0_{J\xi_a}\omega_\varphi)
\end{split}
\]
For arbitrary vector fields $U,V,W$ we compute
\[
\begin{split}
    (\nabla^0_U\omega_\varphi)(V,W)=&\mathcal{L}_U(\omega_\varphi(V,W))-\omega_\varphi(\nabla^0_UV,W)-\omega_\varphi(V,\nabla^0_UW)\\
    =&(\mathcal{L}_U\omega_\varphi)(V,W)+\omega_\varphi([U,V],W)+\omega_\varphi(V,[U,W])\\
    &-\omega_\varphi(\nabla^0_UV,W)-\omega_\varphi(V,\nabla^0_UW)\\
    =&(\mathcal{L}_U\omega_\varphi)(V,W) -\omega_\varphi(\nabla^0_VU,W)-\omega_\varphi(V,\nabla^0_WU)\\
    =&(\mathcal{L}_U\omega_\varphi)(V,W) -[\omega_\varphi(\nabla^0_VU,W)-\omega_\varphi(\nabla^0_WU,V)]\\
    =&\left(\mathcal{L}_U\omega_\varphi-2(\nabla^0 U\lrcorner\omega_\varphi)^{\rm skw}\right)(V,W)
\end{split}
\]
where $(\nabla^0 U\lrcorner\omega_\varphi)^{\rm skw}$ is the skew-symmetric part of the bilinear form $(V,W)\mapsto \omega_\varphi(\nabla^0_V U,W)$. Plugging $U=J\xi_a$ in the above equation, using $\mathcal{L}_{J\xi_a}\omega =dd^c \mu_\varphi^a$ and taking the trace relative to $\omega_0$ we obtain
\[
\begin{split}
\Lambda_0 \nabla^0_{J\xi_a}\omega_\varphi=&-\Delta_0\mu_\varphi^a+2\Lambda_0\left( g_\varphi ( \nabla^0\xi_a, \cdot) \right)^{\rm skw}.
\end{split}
\]
Substituting back we get
\begin{equation*}
\begin{split}\label{eq:g_nabla}
  & g_\varphi \left(d(\log\v(\mu_\varphi)),\Lambda_0\nabla^0\omega_\varphi \right)\\
    &= -\sum_{a=1}^r (\log\v)_{,a}(\mu_\varphi) \big(-\Delta_0\mu_\varphi^a+2\Lambda_0\left( g_\varphi ( \nabla^0\xi_a, \cdot) \right)^{\rm skw}\big)\\
    &= \frac{1}{\v(\mu_\varphi)} \sum_{i=1}^r \v_{,a}(\mu_\varphi)  \Delta_0 \mu_\varphi^a  -2\sum_{a=1}^r (\log\v)_{,a}(\mu_\varphi) \Lambda_0\left( g_\varphi ( \nabla^0\xi_a, \cdot) \right)^{\rm skw}\\
&= \frac{1}{\v(\mu_\varphi)} \Delta_0 \v(\mu_\varphi) -   \frac{1}{\v(\mu_\varphi)}\sum_{a,b=1}^r \v_{,ab}(\mu_\varphi) g_0(d\mu^a_\varphi, d \mu^b_\varphi)\\
&\quad -2\sum_{a=1}^r (\log\v)_{,a}(\mu_\varphi) \Lambda_0\left( g_\varphi ( \nabla^0\xi_a, \cdot) \right)^{\rm skw},
\end{split}
\end{equation*}

where we use \eqref{eq: lap v} for the last equality. We continue the computation

\begin{equation}
\begin{split}
g_\varphi \left(d(\log\v(\mu_\varphi)),\Lambda_0\nabla^0\omega_\varphi \right) &=  \Delta_0 \log\v(\mu_\varphi)   + \frac{1}{\v(\mu_\varphi)^2} \sum_{a,b=1}^r \v_{,a}(\mu_\varphi)\v_{,b}(\mu_\varphi) g_0(d\mu_\varphi^a, d\mu_\varphi^b) \\
&\quad -   \frac{1}{\v(\mu_\varphi)}  \sum_{a,b=1}^r\v_{,ab}(\mu_\varphi)  g_0(d\mu^a_\varphi, d \mu^b_\varphi) \\
&\quad -2\sum_{a=1}^r(\log\v)_{,a}(\mu_\varphi) \Lambda_0\left( g_\varphi ( \nabla^0\xi_a, \cdot) \right)^{\rm skw}\\
&=\Delta_0 \log\v(\mu_\varphi) - \langle \hess( \log\circ \v)(\mu_\varphi), g_0(d\mu_\varphi,d\mu_\varphi)\rangle\\
&\quad -2\sum_{a=1}^r(\log\v)_{,a}(\mu_\varphi) \Lambda_0\left( g_\varphi ( \nabla^0\xi_a, \cdot) \right)^{\rm skw}
\end{split}
\end{equation}
Substituting \eqref{eq:g_nabla} and \eqref{eq:Delta_v_0} back into \eqref{eq:Delta-tr0} we obtain
\[
\begin{split}
    \Delta_{\varphi,\v}(\Lambda_0 (\omega_\varphi))
    =& \Delta_0 F- \langle \hess( \log\circ \v)(\mu_\varphi), g_0(d\mu_\varphi,d\mu_\varphi)\rangle\\
    &-2\sum_{a=1}^r(\log \v)_{,a}(\mu_\varphi) \Lambda_0\left( g_\varphi ( \nabla^0\xi_a, \cdot) \right)^{\rm skw} + |\nabla^0\omega_\varphi|^2_{g_0\otimes g_\varphi}\\
    &+ \sum_{i, j=1}^n \Big(R^0(e_i,e_j,e_j,e_i)+R^0(Je_i,e_j,e_j,Je_i)\Big)\frac{|e_j|_\varphi^2-|e_i|_\varphi^2}{|e_i|_\varphi^2}
\end{split}
\] 

We now want a lower bound for $ \Delta_{\varphi,\v}(\Lambda_0 (\omega_\varphi))$.  We do have
\[
\begin{split}
 &\Big(R^0(e_i,e_j,e_j,e_i)+R^0(Je_i,e_j,e_j,Je_i)\Big)\frac{|e_j|_\varphi^2-|e_i|_\varphi^2}{|e_i|_\varphi^2} \\
& \geq     \Big(R^0(e_i,e_j,e_j,e_i)+R^0(Je_i,e_j,e_j,Je_i)\Big)\frac{|e_j|_\varphi^2}{|e_i|_\varphi^2}   -C, \\
    & \geq- C\sum_{i\neq j}^n  \frac{|e_j|_\varphi^2}{|e_i|_\varphi^2}-C \\
   & \geq  - C\sum_{i=1}^n  \frac{1}{|e_i|_\varphi^2}\sum_{j=1}^n |e_j|_\varphi^2 -C\\
    &=  -C \Lambda_\varphi (\omega_0) \Lambda_0 (\omega_\varphi)  -C
       \end{split}
       \]
       
  Moreover, since by definition $\Lambda_0(\theta)= g_0(\omega_0, \theta)$ for any $(1,1)$-form $\theta$, it follows from Cauchy-Schwartz that for any $a=1, \cdots, r$
  \[
  \begin{split}
    |\Lambda_0\left( g_\varphi ( \nabla^0\xi_a, \cdot) \right)^{\rm skw}|&=\left|g_0\left(\omega_0,\left( g_\varphi ( \nabla^0\xi_a, \cdot) \right)^{\rm skw}\right)\right|\\
    &\leq |\omega_0|_{g_0}\left|\left( g_\varphi ( \nabla^0\xi_a, \cdot) \right)^{\rm skw}\right|_{g_0}\\
    &=|\omega_0|_{g_0}\left|\left( \omega_\varphi ( J\nabla^0\xi_a, \cdot) \right)^{\rm skw}\right|_{g_0}\\
    &\leq C |\omega_0|_{g_0}|\omega_\varphi|_{g_0}\\
    &\leq C\Lambda_0(\omega_\varphi),
\end{split}
\]
where in the forth inequality the constant $C>0$ is such that $|J\nabla^0 \xi_i |_{g_0}\leq C$.
Hence,
     \begin{eqnarray*}
  -2\sum_{a=1}^r(\log \v)_{,a}(\mu_\varphi) \Lambda_0\left( g_\varphi ( \nabla^0\xi_a, \cdot) \right)^{\rm skw}& \geq &   -C  \Lambda_0(\omega_\varphi) \geq -C \Lambda_0(\omega_\varphi)\Lambda_\varphi(\omega_0), 
    \end{eqnarray*}
for $C$ positive constant independent of $\varphi$. We also use Lemma \ref{l:trace} for the last inequality. \\
We have then showed \eqref{yau:estim3} and \eqref{yau:estim}. For \eqref{yau:estim2}, we observe that 

\begin{equation}\label{bound_hess}
    \begin{split}
  - \langle \hess( \log\circ \v)(\mu_\varphi), g_0(d\mu_\varphi,d\mu_\varphi)\rangle \geq& -C\sum_{a,b=1}^r g_0(d\mu^a_\varphi,d\mu^b_\varphi) \\
  \geq &-C\sum_{a,b=1}^r |d\mu_\varphi^a|_0|d\mu_\varphi^b|_0 \\
  =&-C\sum_{a,b=1}^r |\omega_\varphi(\xi^a, \cdot )|_0|\omega_\varphi(\xi^b, \cdot )|_0 \\
  \geq& - C_1 \Lambda_0(\omega_\varphi)^2 \\
  \geq &- C_2\Lambda_0(\omega_\varphi)^n,
    \end{split}
\end{equation}
where for the last inequality we use Lemma \ref{l:trace} and that $n \geq2$.
\end{proof}

We now present a generalization of \cite[Lemma 2.2]{CGP} to the weighted setting:

\begin{lemma}\label{lemma: CGP}
    The following inequality holds true:
  \begin{equation*}
        \Delta_{\varphi,\v}  \log \Lambda_0(\omega_\varphi )\geq \frac{1}{\Lambda_0(\omega_\varphi)} \Delta_{0}F - B \Lambda_\varphi(\omega_0) -\frac{1}{\Lambda_0(\omega_\varphi)}\langle \hess( \log\circ \v)(\mu_\varphi), g_0(d\mu_\varphi,d\mu_\varphi)\rangle,
    \end{equation*}
In particular, using \eqref{bound_hess} we get $$\Delta_{\varphi,\v}  \log \Lambda_0(\omega_\varphi )\geq \frac{1}{\Lambda_0(\omega_\varphi)} \Delta_{0}F - B \Lambda_\varphi(\omega_0) -{C}{\Lambda_0(\omega_\varphi)},$$
   where $C>0$ depends only on $\v$, and $B>0$ depends on $\omega_0, \v,n, \|F\|_{C^0}$ and a lower bound for the holomorphic bisectional curvature of $\omega_0$. Moreover, if $\v$ is log-concave we get
  \begin{equation*}
        \Delta_{\varphi,\v}  \log \Lambda_0(\omega_\varphi )\geq \frac{1}{\Lambda_0(\omega_\varphi)} \Delta_{0}F - B \Lambda_\varphi(\omega_0).
    \end{equation*}
   
\end{lemma}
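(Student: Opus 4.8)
The plan is to deduce this from the previous Yau-type inequality \eqref{yau:estim3} by the standard trick relating $\Delta_{\varphi,\v}\log f$ to $\frac{1}{f}\Delta_{\varphi,\v}f$. Writing $f:=\Lambda_0(\omega_\varphi)$, which is positive by \eqref{ineq:trace1}, we have the pointwise identity
\[
\Delta_{\varphi,\v}\log f=\frac{1}{f}\Delta_{\varphi,\v}f-\frac{1}{f^2}|df|_\varphi^2,
\]
valid because $\Delta_{\varphi,\v}$ differs from $\Delta_\varphi$ only by a first-order term $g_\varphi(d\log\v(\mu_\varphi),d\cdot)$, and both $\Delta_\varphi$ and that first-order term obey the usual chain rule, so the extra ``gradient-squared'' correction is exactly the one coming from $\Delta_\varphi$, namely $-\frac{1}{f^2}|df|_\varphi^2=-\frac{1}{f^2}|d\Lambda_0(\omega_\varphi)|_\varphi^2$. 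Substituting \eqref{yau:estim3} for $\frac{1}{f}\Delta_{\varphi,\v}f$ gives
\[
\Delta_{\varphi,\v}\log\Lambda_0(\omega_\varphi)\geq\frac{1}{\Lambda_0(\omega_\varphi)}\Delta_0 F-\frac{1}{\Lambda_0(\omega_\varphi)}\langle\hess(\log\v)(\mu_\varphi),g_0(d\mu_\varphi,d\mu_\varphi)\rangle+\frac{1}{\Lambda_0(\omega_\varphi)}|\nabla^0\omega_\varphi|^2_{g_0\otimes g_\varphi}-C\Lambda_\varphi(\omega_0)-\frac{C}{\Lambda_0(\omega_\varphi)}-\frac{1}{\Lambda_0(\omega_\varphi)^2}|d\Lambda_0(\omega_\varphi)|_\varphi^2.
\]
So the whole point reduces to absorbing the bad term $-\frac{1}{\Lambda_0(\omega_\varphi)^2}|d\Lambda_0(\omega_\varphi)|_\varphi^2$ into the good term $\frac{1}{\Lambda_0(\omega_\varphi)}|\nabla^0\omega_\varphi|^2_{g_0\otimes g_\varphi}$, which is precisely the content of \cite[Lemma 2.2]{CGP} in the unweighted case.

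The key step, and the main obstacle, is therefore the Cauchy--Schwarz estimate
\[
|d\Lambda_0(\omega_\varphi)|_\varphi^2\leq\Lambda_0(\omega_\varphi)\,|\nabla^0\omega_\varphi|^2_{g_0\otimes g_\varphi}.
\]
To prove it I would use Lemma \ref{l:diff:trace} with $\beta=\omega_\varphi$: since $\nabla^0\omega_0=0$ one has $g_0(\omega_\varphi,\nabla^0_V\omega_0)=0$, hence $d\Lambda_0(\omega_\varphi)(V)=\Lambda_0(\nabla^0_V\omega_\varphi)=g_0(\omega_0,\nabla^0_V\omega_\varphi)$. Thus for a unit (w.r.t. $g_\varphi$) vector $V$, writing things in a frame that simultaneously diagonalizes $g_0$ and $g_\varphi$ — say $g_0$-orthonormal $(e_i,Je_i)$ with $|e_i|_\varphi^2=\lambda_i$ — one gets $d\Lambda_0(\omega_\varphi)(V)=\sum_i(\nabla^0_V\omega_\varphi)(e_i,Je_i)$ and by Cauchy--Schwarz (summing over $i$ with weights $\lambda_i^{-1}$ and $\lambda_i$, noting that $(\nabla^0\omega_\varphi)$ contracted in the $g_\varphi$-metric introduces exactly the factors $\lambda_i^{-1}$) one bounds $|d\Lambda_0(\omega_\varphi)(V)|^2\leq\big(\sum_i\lambda_i^{-1}\big)\big(\sum_i\lambda_i|(\nabla^0_V\omega_\varphi)(e_i,Je_i)|^2\big)\leq\Lambda_\varphi(\omega_0)\cdot|\nabla^0_V\omega_\varphi|_{g_\varphi}^2\cdot(\text{const})$; summing over a $g_\varphi$-orthonormal basis of $V$'s then yields $|d\Lambda_0(\omega_\varphi)|_\varphi^2\leq\Lambda_0(\omega_\varphi)|\nabla^0\omega_\varphi|^2_{g_0\otimes g_\varphi}$ after one reorganizes the indices — the subtle point being to match the metric contractions on each slot correctly, exactly as in \cite{CGP}, \cite{Yau78}. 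This is the only genuinely computational part; it is unaffected by the weight since $\beta=\omega_\varphi$ and the identity $\nabla^0\omega_0=0$ are weight-independent.

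Once the Cauchy--Schwarz bound is in place, $-\frac{1}{\Lambda_0(\omega_\varphi)^2}|d\Lambda_0(\omega_\varphi)|_\varphi^2+\frac{1}{\Lambda_0(\omega_\varphi)}|\nabla^0\omega_\varphi|^2_{g_0\otimes g_\varphi}\geq0$, so these two terms drop out and we are left with
\[
\Delta_{\varphi,\v}\log\Lambda_0(\omega_\varphi)\geq\frac{1}{\Lambda_0(\omega_\varphi)}\Delta_0 F-C\Lambda_\varphi(\omega_0)-\frac{1}{\Lambda_0(\omega_\varphi)}\langle\hess(\log\v)(\mu_\varphi),g_0(d\mu_\varphi,d\mu_\varphi)\rangle-\frac{C}{\Lambda_0(\omega_\varphi)}.
\]
Finally, by \eqref{ineq:trace1} we have $\Lambda_0(\omega_\varphi)\geq C_1^{-1}>0$, so $\frac{C}{\Lambda_0(\omega_\varphi)}\leq CC_1\leq C'\Lambda_\varphi(\omega_0)$ using \eqref{ineq:trace1} again (or simply absorbing a constant into $B\Lambda_\varphi(\omega_0)$ since $\Lambda_\varphi(\omega_0)\geq C_2^{-1}$); renaming constants gives the stated first inequality with $B$ depending on $\omega_0,\v,n,\|F\|_{C^0}$ and a lower bound for the holomorphic bisectional curvature of $\omega_0$. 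The ``In particular'' statement is then immediate by plugging in \eqref{bound_hess}, i.e. $-\frac{1}{\Lambda_0(\omega_\varphi)}\langle\hess(\log\v)(\mu_\varphi),g_0(d\mu_\varphi,d\mu_\varphi)\rangle\geq-C_2\Lambda_0(\omega_\varphi)^n/\Lambda_0(\omega_\varphi)\geq-C\Lambda_0(\omega_\varphi)$ after using $n\geq 2$ and \eqref{ineq:trace:0phi}; and the log-concave case is immediate since then $\hess(\log\v)\leq0$, so the Hessian term has the right sign and can simply be dropped.
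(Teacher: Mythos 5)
Your route is the same as the paper's: write $\Delta_{\varphi,\v}\log\Lambda_0(\omega_\varphi)=\frac{1}{\Lambda_0(\omega_\varphi)}\Delta_{\varphi,\v}\Lambda_0(\omega_\varphi)-\frac{1}{\Lambda_0(\omega_\varphi)^2}|d\Lambda_0(\omega_\varphi)|^2_\varphi$ (correct, since the weighted Laplacian differs from $\Delta_\varphi$ by a first-order term), identify $d\Lambda_0(\omega_\varphi)$ with $\Lambda_0(\nabla^0\omega_\varphi)$ via Lemma \ref{l:diff:trace}, absorb the gradient term into $|\nabla^0\omega_\varphi|^2_{g_0\otimes g_\varphi}$ through the Cauchy--Schwarz inequality $|d\Lambda_0(\omega_\varphi)|^2_\varphi\le\Lambda_0(\omega_\varphi)\,|\nabla^0\omega_\varphi|^2_{g_0\otimes g_\varphi}$, plug in \eqref{yau:estim3}, and absorb $C/\Lambda_0(\omega_\varphi)$ into $B\Lambda_\varphi(\omega_0)$ via \eqref{ineq:trace1}; the paper does exactly this, quoting the Cauchy--Schwarz inequality from \cite{Siu}.

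Two details in your write-up need repair. First, your sketch of the Cauchy--Schwarz step does not, as written, yield the inequality you state: your weighted splitting produces the factor $\sum_i\lambda_i^{-1}=\Lambda_\varphi(\omega_0)$ instead of $\Lambda_0(\omega_\varphi)=\sum_i\lambda_i$; the second factor $\sum_i\lambda_i\,|(\nabla^0_V\omega_\varphi)(e_i,Je_i)|^2$ is not controlled by $|\nabla^0_V\omega_\varphi|^2_{g_\varphi}$ up to a uniform constant (the diagonal components of that norm carry weights $\lambda_i^{-2}$, not $\lambda_i$); and summing over a $g_\varphi$-orthonormal family of directions $V$ does not produce $|\nabla^0\omega_\varphi|^2_{g_0\otimes g_\varphi}$, which is built from a $g_0$-orthonormal frame in the derivative slot. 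The correct argument writes $d\Lambda_0(\omega_\varphi)(e_k)=\sum_i\sqrt{\lambda_i}\cdot\lambda_i^{-1/2}(\nabla^0_{e_k}\omega_\varphi)(e_i,Je_i)$ so that Cauchy--Schwarz gives the factor $\Lambda_0(\omega_\varphi)$, and then uses the symmetry of $\nabla^0\omega_\varphi$ (coming from $d\omega_\varphi=0$ and torsion-freeness of $\nabla^0$) to recognize the remaining weighted sum as part of $|\nabla^0\omega_\varphi|^2_{g_0\otimes g_\varphi}$; this is precisely Siu's computation, which you could simply cite as the paper does. Second, in the ``in particular'' step you divide the final bound $-C\Lambda_0(\omega_\varphi)^n$ of \eqref{bound_hess} by $\Lambda_0(\omega_\varphi)$, which only gives $-C\Lambda_0(\omega_\varphi)^{n-1}$ and is weaker than the stated $-C\Lambda_0(\omega_\varphi)$ once $n\ge 3$; you should instead use the intermediate bound $-C_1\Lambda_0(\omega_\varphi)^2$ appearing inside \eqref{bound_hess}, which after division gives exactly the claimed $-C\Lambda_0(\omega_\varphi)$. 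With these two repairs your argument coincides with the paper's proof, including the treatment of the log-concave case, where the Hessian term is dropped by sign.
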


\begin{proof}
    A direct computation (using the explicit expression of the weighted Laplacian given in Lemma \ref{lemma weighted Laplacian}) shows that

\begin{equation*}
    \Delta_{\varphi,\v} \log \Lambda_0(\omega_\varphi) = \frac{1}{\Lambda_0(\omega_\varphi)}  \Delta_{\varphi,\v} \Lambda_0(\omega_\varphi) - \frac{1}{\Lambda_0(\omega_\varphi)^2} | d \Lambda_0(\omega_\varphi)|_\varphi^2. 
\end{equation*}
We focus on the second term of the RHS of the above equality. We let $(e_k, Je_k)_{k=1}^{n}$ be a orthonormal frame with respect to $\omega_0$. By Lemma \ref{l:diff:trace} and the fact that $g_0(\omega_\varphi, \nabla^0 \omega_0)=0$ it follows that
\begin{equation*}
| d \Lambda_0(\omega_\varphi)|_\varphi^2= | \nabla^0 \Lambda_0(\omega_\varphi)|_\varphi^2 =|\Lambda_0(\nabla^0\omega_\varphi)|_\varphi^2, 
   \end{equation*}

By \cite[section (3.2) p. 99]{Siu} we have that
\begin{equation*}
\begin{split}
   |\Lambda_0(\nabla^0\omega_\varphi)|_\varphi^2=&|\nabla^0 \Lambda_0(\omega_\varphi)|_\varphi^2 \\
   \leq&\Lambda_0(\omega_\varphi) \left( \sum_{k=1}^{n} | \nabla^0_{e_k} \omega_\varphi |_\varphi^2 + \sum_{k=1}^{n} | \nabla^0_{Je_k} \omega_\varphi |_\varphi^2\right)\\
   =&\Lambda_0(\omega_\varphi) |\nabla^0\omega_\varphi|^2_{g_0\otimes g_\varphi}.
\end{split}  
\end{equation*}
From \eqref{yau:estim3} we have
\begin{equation*}
  \Delta_{\varphi,\v} \Lambda_0(\omega_\varphi) \geq \Delta_{0} F  - \langle \hess( \log \v )(\mu_\varphi), g_0(d\mu_\varphi,d\mu_\varphi)\rangle+    |\nabla^0\omega_\varphi|^2_{g_0\otimes g_\varphi}- C_1 \Lambda_0(\omega_\varphi)\Lambda_\varphi(\omega_0) - C_2 .
\end{equation*}  
 We then get 
\begin{equation*}
\begin{split}
    \Delta_{\varphi,\v} \log \Lambda_0(\omega_\varphi) \geq& \frac{1}{\Lambda_0(\omega_\varphi)} \left( \Delta_{0} F +  |\nabla^0\omega_\varphi|^2_{g_0\otimes g_\varphi} - C_1 \Lambda_0(\omega_\varphi)\Lambda_\varphi(\omega_0)  -C_2  \right) \\
    & -\frac{1}{\Lambda_0(\omega_\varphi)}\langle \hess( \log\circ \v)(\mu_\varphi), g_0(d\mu_\varphi,d\mu_\varphi)\rangle -  \frac{1}{\Lambda_0(\omega_\varphi)} |\nabla^0\omega_\varphi|^2_{g_0\otimes g_\varphi}\\
    \geq& \frac{1}{\Lambda_0(\omega_\varphi)}  \Delta_{0} F - C_1\Lambda_\varphi(\omega_0)- \frac{C_2}{\Lambda_0(\omega_\varphi)} \\
    &-\frac{1}{\Lambda_0(\omega_\varphi)}\langle \hess( \log\circ \v)(\mu_\varphi), g_0(d\mu_\varphi,d\mu_\varphi)\rangle \\
     \geq &\frac{1}{\Lambda_0(\omega_\varphi)}  \Delta_{0} F - (C_1+C_2)\Lambda_\varphi(\omega_0)\\
     &-\frac{1}{\Lambda_0(\omega_\varphi)}\langle \hess( \log\circ \v)(\mu_\varphi), g_0(d\mu_\varphi,d\mu_\varphi)\rangle,
\end{split}    
\end{equation*} 
where in the last inequality we used that $\Lambda_\varphi(\omega_0) \Lambda_0(\omega_\varphi)\geq 1$.
\end{proof}

We are now ready to give the proof of Theorem \ref{thm:integral}:

\begin{proof}[proof of Theorem \ref{thm:integral}]
Consider $$u:= e^{-\gamma( F+\lambda \varphi)} \Tr\, (\omega_\varphi),$$ 

where $\lambda>1$ and $\gamma>1$ are uniform constants to be chosen in a suitable way in the following. 

The $\v$-Laplacian of $u$ is

\begin{equation*}
\begin{split}
    \Delta_{\varphi,\v} u  =&    \Delta_{\varphi,\v} e^{\log u} \\
   =& \Delta_\varphi e^{\log u} + g_\varphi\left( d\log \v(\mu_\varphi), d\left(e^{\log u}\right)\right) \\
    =&  \Lambda_{\varphi}\left( \frac{e^{\log u}}{u^2} du\wedge d^cu\right) + e^{\log u} \Delta_{\varphi,\v} \log u \\
    = & \frac{e^{\log(u)}}{{u^2}}|du|^2_\varphi + e^{\log u} \Delta_{\varphi,\v} \log u \\
    \geq & e^{\log u} \Delta_{\varphi,\v} \log u \\
    =& -\gamma u\Delta_{\varphi,\v}(F + \lambda \varphi)   + u \Delta_{\varphi,\v} \left( \log \Lambda_0(\omega_\varphi)\right)  \\
\end{split}
\end{equation*}

From \eqref{weighted system}, the fact that $\Ric(\omega_0)\leq A_0 \omega_0$ and that $\Lambda_{\varphi, \v} (\omega_\varphi) \leq n+C$ (by \eqref{def:weighted:trace}) we deduce that
 
 \begin{equation*}
     \begin{split}
          \Delta_{\varphi,\v}(F + \lambda \varphi)=&  -\frac{\w(\mu_\varphi)}{\v(\mu_\varphi)} + 2 \Lambda_{\varphi, \v}(\Ric(\omega_0))
+ \lambda \Lambda_{\varphi,\v} dd^c\varphi \\
    =& -\frac{\w(\mu_\varphi)}{\v(\mu_\varphi)} + 2 \Lambda_{\varphi, \v}(\Ric(\omega_0))
    +\lambda\left( \Lambda_{\varphi,\v}(\omega_\varphi)- \Lambda_{\varphi, \v}(\omega_0)\right) \\
    \leq&  C_1 + (2A_0 - \lambda) \Lambda_\varphi(\omega_0) +\lambda n +C_2 \\
=&  C_3+ (2A_0 - \lambda) \Lambda_\varphi(\omega_0)
     \end{split}
 \end{equation*}


Thus, combining the above inequalities together with Lemma \ref{lemma: CGP} leads us to:

\begin{equation*}
\begin{split}
\Delta_{\varphi,\v} u  \geq & e^{-\gamma (F+\lambda \varphi)} \bigg(-\gamma C_3 \Tr( \omega_\varphi) +  \Delta_{0}F+(\lambda \gamma -2A_0 \gamma - B)  \Tr( \omega_\varphi) \Lambda_{\varphi}(\omega_0) \bigg) 
\end{split}
\end{equation*}
Using \eqref{weighted system}, (the proof of) Lemma \ref{l:trace} and the fact that $n \geq 2$, we have

$$\Tr(\omega_\varphi) \Lambda_\varphi (\omega_0) \geq n^{-\frac{1}{n-1}} e^{\frac{-F}{n-1}}\eta^{\frac{1}{n-1}} \Tr(\omega_\varphi)^{1+\frac{1}{n-1}}:= a(n, \eta) e^{\frac{-F}{n-1}} \Tr(\omega_\varphi)^{1+\frac{1}{n-1}},$$

\noindent where $\eta$ is defined in \eqref{bound:v}.\\
We now choose $\lambda \geq 4 \max (2A_0, B)$ (in order to have $\lambda \gamma -2A_0\gamma -B\geq \frac{\lambda \gamma }{2}$) so that
\begin{equation}\label{lower bound lap}
\Delta_{\varphi,\v} u  \geq  -\gamma C_3  u + \frac{\lambda \gamma}{2} a(n, \eta) e^{-\frac{F}{n-1}}  \Tr(\omega_\varphi)^{\frac{1}{n-1}} \,u +e^{-\gamma (F+\lambda \varphi)} \Delta_{0} F.
\end{equation}
Now, since $|du |^2_\varphi \Tr\, \omega_\varphi \geq |du|_0^2 $ holds pointwise, we write
$$
\frac{1}{2p+1} \Delta_{\varphi,\v} u^{2p+1} = u^{2p} \Delta_{\varphi,\v} u + 2p u^{2p-1} |du|^2_\varphi \\
\geq  u^{2p} \Delta_{\varphi,\v} u +2p u^{2p-2} e^{-\gamma (F+\lambda \varphi)} |du|_0^2.
$$

Thus, combining the above inequality with \eqref{lower bound lap}, and since $\Delta_{\varphi, \v}$ is self-adjoint w.r.t. to $\v(\mu_\varphi)\omega_\varphi^n$ (Lemma \ref{l:weightedlap:2}), we get

\begin{equation}\label{integral bound}
\begin{split}
 0 = \frac{1}{2p+1} \int_X \Delta_{\varphi,\v} u^{2p+1} \v(\mu_\varphi)\omega_\varphi^{[n]} 
&\geq   2p \int_X u^{2p-2}|d u |_0^2  e^{-\gamma (F+\lambda \varphi)+F} \omega_0^{[n]}-\gamma C_3  \int_X u^{2p+1} e^F \omega_0^{[n]}\\
&\quad+\frac{\gamma \lambda}{2} a(n, \eta) \int_X u^{2p+1} e^{\left(\frac{n-2}{n-1}\right)F}  \Tr\,(\omega_\varphi)^{\frac{1}{n-1}} \omega_0^{[n]} \\
&\quad+\int_X u^{2p} e^{-\gamma (F+\lambda \varphi)+F} \Delta_{0} F\, \omega_0^{[n]}.
\end{split}
\end{equation}
Next, we focus on finding a suitable lower bound for the last term involving the Laplacian of $F$. This estimate goes as in \cite{DD21} but we show it for completeness and for reader's convenience. 

 Set $G:=(1-\gamma)F-\gamma\lambda \varphi$. A formal trick gives that
\begin{equation*}
\begin{split}
I:=& -\int_X u^{2p} \Delta_{0} F\, e^{G} \, \omega_0^{[n]}\\
=& \frac{1}{\gamma-1} \int_X u^{2p} \,\Delta_0G  \, e^{G} \, \omega_0^{[n]}+\frac{\gamma \lambda }{\gamma-1} \int_X u^{2p} \Delta_{0}\varphi\, e^{G} \, \omega_0^{[n]} \\
:=&   I_1+I_2.
\end{split}
\end{equation*}
Integration by part gives

\begin{eqnarray}\label{bound I1}
\nonumber I_1 &=  &
-\frac{1}{\gamma-1} \int_X u^{2p} |d G|_0^2 e^{G} \, \omega_0^{[n]} - \frac{2p}{\gamma -1} \int_X u^{2p-1}  \,e^{G} \, du\wedge d^c G\wedge \omega_0^{[n-1]}\\
\nonumber &\leq &  -\frac{1}{2(\gamma-1)} \int_X u^{2p} |d G|_0^2 e^{G} \omega_0^{[n]} + \frac{2p^2}{\gamma -1} \int_X u^{2p-2} |d u|_0^2 e^G  \, \omega_0^{[n]}\\
&\leq & \frac{ 2p^2}{\gamma -1} \int_X u^{2p-2} |du|_0^2 e^G \,\omega_0^{[n]},
\end{eqnarray}

where in the first inequality we used the fact that 
$$\left|2pu^{2p-1} \frac{du\wedge d^c G\wedge \omega_0^{[n-1]}}{\omega_0^{[n]}}\right| \leq \frac{(2p)^2 }{2} u^{2p-2} | d u|_0^2 +\frac{1}{2} u^{2p}|d G|_0^2, $$ by Young's inequality. Also,




\begin{equation}\label{bound I2}
\begin{split}
I_2 =& \frac{\gamma \lambda }{\gamma-1} \int_X u^{2p} (\Lambda_0(\omega_\varphi)-n)\, e^{G} \, \omega_0^{[n]}\\
\leq&\frac{\gamma \lambda }{\gamma-1} \int_X u^{2p} \Lambda_0(\omega_\varphi) e^G\, \omega_0^{[n]} 
= \frac{\gamma \lambda }{\gamma-1} \int_X u^{2p+1}  e^F\, \omega_0^{[n]}.
\end{split}
\end{equation}
Suppose  $p>1$. Then Combining \eqref{integral bound}, \eqref{bound I1}, \eqref{bound I2} and choosing $\gamma$ big enough (say $\gamma=ap$ with $a>>0$)  we obtain
\begin{equation}
\begin{split}\label{big-variant}
0&\geq  2\left( p-\frac{ p^2}{\gamma -1}\right)   \int_X u^{2p-2} |d u|_0^2 e^{G} \,  \\
&\quad - \gamma \left(C_3+\frac{\lambda}{\gamma-1}\right) \int_X u^{2p+1} e^F \omega_0^{[n]} +\frac{\gamma \lambda}{2} a(n, \eta) \int_X u^{2p+1} e^{\left(\frac{n-2}{n-1}\right)F} \Tr(\omega_\varphi)^{\frac{1}{n-1}} \omega_0^{[n]}\\
&\geq - C_4 \int_X u^{2p+1} e^F \omega_0^{[n]} + C_5 \int_X u^{2p+1} e^{\left(\frac{n-2}{n-1}\right)F} \Tr(\omega_\varphi)^{\frac{1}{n-1}} \omega_0^{[n]}.
\end{split}
\end{equation}

\noindent where the constant $C_4>0$, $C_5 >0$ depends on $\|F\|_0$, $\| \varphi \|_0$. Observe that in \eqref{big-variant}, the choice
of $\gamma$ ensures that $p-\frac{ p^2}{\gamma-1} > 0$. Using H\"older inequality we can conclude that

\begin{equation*}
    \| \Lambda_0(\omega_\varphi)\|^{2p+1+\frac{1}{n-1}}_{L^{2p+1+\frac{1}{n-1}}} \leq C  \| \Lambda_0(\omega_\varphi)\|^{2p+1}_{L^{2p+1}} \leq  C' \| \Lambda_0(\omega_\varphi)\|^{2p+1}_{L^{2p+1+\frac{1}{n-1}}}
\end{equation*}

This gives the statement for $p> 3$, hence for $p\geq 1$ thanks to H\"older inequality.
\end{proof}

\section{$C^2$-estimates}{\label{s:c2:esti}}
 The goal to this section is to prove the theorem below.

\begin{theorem}{\label{t:c2:estim}}
Suppose $\v$ is log-concave. Let $\varphi$ be a solution of \eqref{weighted system}. Then there exists a positive constant $C$ depending on $\omega_0$, $\v$, $\w$, $\|F\|_{C^0}$ and $\| \varphi \|_{C^0}$ such that

\begin{equation*}
    \max_X\left(|dF|^2_\varphi + \Lambda_0(\omega_\varphi)\right) \leq C.
\end{equation*}
\end{theorem}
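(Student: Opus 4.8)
The plan is to bound from above the quantity
$$u := e^{F/2}\,|dF|_\varphi^2 + K\,\Lambda_0(\omega_\varphi),$$
where $K\ge 1$ is a large constant to be fixed in the course of the argument. By the $C^0$-estimate (Corollary \ref{coro-covid-19}) the factor $e^{F/2}$ is bounded above and below, so $u$ is comparable to $|dF|_\varphi^2+\Lambda_0(\omega_\varphi)$ and a uniform bound $\sup_X u\le C$ gives the theorem. Two elementary remarks lighten the bookkeeping: since $\omega_\varphi^n=\v(\mu_\varphi)^{-1}e^F\omega_0^n$ with $F$ bounded, the volume forms $\omega_\varphi^n$ and $\omega_0^n$ are uniformly comparable, so $L^q$-norms computed against either are equivalent; and by \eqref{ineq:trace1} one has $\Lambda_0(\omega_\varphi)\ge c_0>0$, so $u\ge Kc_0>0$.

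The analytic heart is Proposition \ref{p:weightedlap:u}, a pointwise lower bound for $\Delta_{\varphi,\v}u$. For the $e^{F/2}|dF|_\varphi^2$ part I would apply the weighted Bochner formula, expanding $\Delta_{\varphi,\v}|dF|_\varphi^2$ into the nonnegative Hessian term $2|\nabla^\varphi dF|_\varphi^2$, a term $2\,\Ric(\omega_\varphi)(\nabla^\varphi F,\nabla^\varphi F)$, a term $2\,g_\varphi\big(dF,d(\Delta_{\varphi,\v}F)\big)$, and the corrections produced by the weight; then use the second equation of \eqref{weighted system} together with \eqref{bound-trace-weight} and Lemma \ref{l:trace} to rewrite $\Delta_{\varphi,\v}F$ and its gradient in terms of $\Lambda_\varphi(\omega_0)$ and first derivatives of $\mu_\varphi$, and the identity $\Ric(\omega_\varphi)=\Ric(\omega_0)-\tfrac12 dd^cF+\tfrac12 dd^c\log\v(\mu_\varphi)$ to trade the uncontrolled Ricci term for expressions that are either controlled or absorbable into $2|\nabla^\varphi dF|_\varphi^2$. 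For the $K\Lambda_0(\omega_\varphi)$ part I would invoke Lemma \ref{l:yau} in its clean form \eqref{yau:estim}: this is exactly where the log-concavity of $\v$ enters, since it makes $-\langle\hess(\log\v)(\mu_\varphi),g_0(d\mu_\varphi,d\mu_\varphi)\rangle\ge0$ and thus upgrades the weak bound \eqref{yau:estim2}, with its unmanageable $\Lambda_0(\omega_\varphi)^n$, to one involving only $\Lambda_0(\omega_\varphi)\Lambda_\varphi(\omega_0)$ (which, as $\Lambda_0(\omega_\varphi)\le u/K$, is $\lesssim u\,\Lambda_\varphi(\omega_0)$); it also supplies the second nonnegative term $K|\nabla^0\omega_\varphi|^2_{g_0\otimes g_\varphi}$ and a term $K\Delta_0F$. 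Finally one Cauchy--Schwarzes every mixed third-order term — a factor of $\nabla^\varphi dF$ paired with a factor of $\nabla^0\omega_\varphi$, plus the weighted terms produced by differentiating $\log\v(\mu_\varphi)$ and the vector fields $\xi_a$ — into $|\nabla^\varphi dF|_\varphi^2+K|\nabla^0\omega_\varphi|^2_{g_0\otimes g_\varphi}$, which is legitimate once $K$ is large enough. The outcome is an inequality of the same qualitative shape as \eqref{lower bound lap}: $\Delta_{\varphi,\v}u\ \ge\ K\Delta_0F\ +\ (\text{nonnegative terms})\ -\ C\big(1+\Lambda_\varphi(\omega_0)\big)\,u$, with $C$ depending only on $\omega_0,\v,\w,\|F\|_{C^0},\|\varphi\|_{C^0}$ and a lower bound for the holomorphic bisectional curvature of $\omega_0$.

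With such a lower bound in hand, $\sup_X u\le C$ follows by the iteration scheme of the proof of Theorem \ref{thm:integral}, now applied to the powers $u^{2p+1}$. One tests the inequality against $u^{2p-1}$, integrates against $\v(\mu_\varphi)\omega_\varphi^n$, and uses that $\Delta_{\varphi,\v}$ is self-adjoint for this measure (Lemma \ref{l:weightedlap:2}), so that expanding $\Delta_{\varphi,\v}u^{2p+1}$ by the product rule produces a term $\int u^{2p-1}|du|_\varphi^2$ with the good sign; since $u\ge K\Lambda_0(\omega_\varphi)$ and $|du|_\varphi^2\,\Lambda_0(\omega_\varphi)\ge|du|_0^2$ pointwise, this term absorbs the $\omega_0$-gradient contribution arising when $K\Delta_0F$ is integrated by parts. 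The terms carrying a factor $\Lambda_\varphi(\omega_0)$ are controlled by H\"older against the bounds $\Lambda_\varphi(\omega_0)\le C\Lambda_0(\omega_\varphi)^{n-1}\in L^q(\omega_0^n)$, uniformly in $q$, furnished by Lemma \ref{l:trace} and Theorem \ref{thm:integral}. The resulting reverse-H\"older inequalities, iterated as in the passage from \eqref{big-variant} to the conclusion of Theorem \ref{thm:integral}, give $\sup_X u\le C$. A starting integrability $u\in L^{q_0}$, $q_0>1$, is provided by Theorem \ref{thm:integral} for the $K\Lambda_0(\omega_\varphi)$ part and, for the $e^{F/2}|dF|_\varphi^2$ part, by the Caccioppoli bound obtained on testing the second equation of \eqref{weighted system} against $F$. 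Unravelling the definition of $u$ then gives $|dF|_\varphi^2+\Lambda_0(\omega_\varphi)\le C$, and the higher-order estimates follow by standard elliptic and complex Monge--Amp\`ere regularity applied to \eqref{weighted system}, as recalled in the introduction.

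I expect the genuine difficulty to be concentrated in Proposition \ref{p:weightedlap:u}: keeping track of the proliferation of weighted error terms — those built from the Hessians of $\v$ and $\log\v$ at $\mu_\varphi$, the covariant derivatives $\nabla^0\xi_a$, and the gradients $d\mu_\varphi^a$ — and checking that, after Cauchy--Schwarz, they are all absorbed by $|\nabla^\varphi dF|_\varphi^2+K|\nabla^0\omega_\varphi|^2_{g_0\otimes g_\varphi}$ with $K$ large; it is precisely this absorption that forces the log-concavity hypothesis, through the need for the sharp form \eqref{yau:estim} of Lemma \ref{l:yau} rather than \eqref{yau:estim2}. The remaining point — that $\Delta_{\varphi,\v}$ is only degenerate elliptic, with no two-sided pointwise control on its coefficients — is exactly why the concluding step must be run as an integral iteration feeding on Theorem \ref{thm:integral}, rather than by invoking an off-the-shelf local-boundedness statement.
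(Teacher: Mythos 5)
Your proposal follows essentially the same route as the paper's proof: the same auxiliary function $u=e^{F/2}|dF|^2_\varphi+K\Lambda_0(\omega_\varphi)$, the same pointwise lower bound on $\Delta_{\varphi,\v}u$ (Proposition \ref{p:weightedlap:u}) proved via the weighted Bochner formula and the weighted Yau inequality, and the same concluding integral iteration using the self-adjointness of $\Delta_{\varphi,\v}$, the $L^1$-bound on $u$, and the $L^p$-bounds of Theorem \ref{thm:integral} as in \cite{DD21}. One inaccuracy worth noting (though it does not derail the argument): in the paper the log-concavity of $\v$ is \emph{not} what makes the pointwise step work --- Proposition \ref{p:weightedlap:u} holds for any positive weight using the weaker bound \eqref{yau:estim2}, since an error of the form $\Lambda_0(\omega_\varphi)^{3n-3}u$ is perfectly manageable in the iteration --- and the hypothesis is really needed only through Theorem \ref{thm:integral}; likewise your claimed error shape $C\bigl(1+\Lambda_\varphi(\omega_0)\bigr)u$ is slightly optimistic, as several weighted terms (e.g. the $\nabla^{\varphi,-}d\log\v(\mu_\varphi)$ and $d\mu^a_\varphi$ contributions) carry factors of $\Lambda_0(\omega_\varphi)$ rather than $\Lambda_\varphi(\omega_0)$, but your H\"older/iteration scheme handles those just as well.
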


We start with several lemmas and propositions which are true for any weights $\v>0$. The assumption of the concavity of $\log \v$ is only needed in proof of Theorem \ref{t:c2:estim} below.

We begin by a technical lemma:

\begin{lemma}\label{ddc-grad}
For any smooth function $f$ on $X$ we have
$$ dd^c f(\cdot, J\cdot )= 2\nabla^{\varphi,+}df (\cdot, \cdot),$$
where $\nabla^{\varphi, +}\alpha$ for a 1-form $\alpha$ is defined by

\begin{equation*}
\nabla_V^{\varphi, \pm}\alpha (W):=\frac{1}{2}\left(\nabla^\varphi_V\alpha(W) {\pm}\nabla^\varphi_{JV}\alpha(JW)\right).
\end{equation*}
\end{lemma}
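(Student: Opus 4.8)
The plan is to express both sides through the Levi–Civita connection $\nabla^\varphi$ of $g_\varphi$ and to exploit that $(X,\omega_\varphi)$ is K\"ahler, so $\nabla^\varphi J=0$, together with the fact that $df$ is a closed $1$-form. The identity is pointwise and tensorial, so it suffices to verify it on arbitrary vector fields.

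First I would record two structural facts. Since $\nabla^\varphi$ is torsion-free, for any $1$-form $\beta$ one has $d\beta(V,W)=(\nabla^\varphi_V\beta)(W)-(\nabla^\varphi_W\beta)(V)$; applying this to $\beta=df$ and using $d^2f=0$ shows that the Hessian $\nabla^\varphi df$ is a symmetric $2$-tensor, i.e. $(\nabla^\varphi_V df)(W)=(\nabla^\varphi_W df)(V)$. Second, recalling the convention $d^cf=-df(J\cdot)=-df\circ J$ used in the proof of Lemma \ref{l:diff:trace}, the K\"ahler identity $\nabla^\varphi J=0$ gives $\nabla^\varphi_V(d^cf)=-(\nabla^\varphi_V df)\circ J$ for every vector field $V$.

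Next I would compute $dd^cf$ by applying the torsion-free formula once more, now to $\beta=d^cf$:
\[
dd^cf(V,W)=(\nabla^\varphi_V(d^cf))(W)-(\nabla^\varphi_W(d^cf))(V)=-(\nabla^\varphi_V df)(JW)+(\nabla^\varphi_W df)(JV).
\]
Substituting $W\mapsto JW$ and using $J^2=-\mathrm{id}$ gives
\[
dd^cf(V,JW)=(\nabla^\varphi_V df)(W)+(\nabla^\varphi_{JW} df)(JV).
\]
Finally, symmetry of $\nabla^\varphi df$ allows me to rewrite $(\nabla^\varphi_{JW} df)(JV)=(\nabla^\varphi_{JV} df)(JW)$, so that
\[
dd^cf(V,JW)=(\nabla^\varphi_V df)(W)+(\nabla^\varphi_{JV} df)(JW)=2\,\nabla^{\varphi,+}_V df(W),
\]
which is precisely the claimed identity.

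I do not expect a genuine obstacle here: the only points that require care are the bookkeeping of the sign and normalization conventions for $d^c$ and $dd^c$ fixed in the introduction, and checking that it is the symmetric (``$+$'') part, rather than the skew part, that survives the computation. The K\"ahler condition $\nabla^\varphi J=0$ is exactly what makes the argument close.
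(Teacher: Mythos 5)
Your proof is correct and follows essentially the same route as the paper: write $d(d^cf)$ via the torsion-free formula, use $\nabla^\varphi J=0$ to pull $J$ out, then substitute $W\mapsto JW$. The only difference is that you make explicit the symmetry of the Hessian $\nabla^\varphi df$ (from torsion-freeness and $d^2f=0$), a step the paper uses tacitly when it rewrites $(\nabla^\varphi_{JW}df)(JV)$ as $(\nabla^\varphi_{JV}df)(JW)$; this is a welcome clarification, not a deviation.
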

Let us stress that the above identity holds for the Levi-Civita connection $\nabla^{\varphi}$ associated to any K\"ahler metric $\omega_\varphi$. Observe as well that $ \nabla^{\varphi,+}$ and $ \nabla^{\varphi,-}$ are orthogonal w.r.t. $g_\varphi$.
\begin{proof}
Let $V,W$ be two vector fields, we compute
\[
\begin{split}
    dd^cf(V,W)= & \nabla^\varphi_V d^cf(W)-\nabla^\varphi_W d^cf(V) \\
    =&(\nabla^\varphi_V Jdf)(W)-(\nabla^\varphi_W Jdf)(V)\\
    =&(J\nabla^\varphi_V df)(W)-(J\nabla^\varphi_W df)(V)\\
    =&-(\nabla^\varphi_V df)(JW)+(\nabla^\varphi_W df)(JV),
\end{split}
\]
where we use that $\nabla^\varphi J=0$. It follows that 
\[
dd^cf(V,JW)=-\nabla^\varphi_V df(J^2W)+\nabla^\varphi_{JV} df(JW)=2\nabla_V^{\varphi,+}df(W).
\]
\end{proof}

The first main step to prove Theorem \ref{t:c2:estim} is this key proposition:

\begin{prop}{\label{p:weightedlap:u}}
Let $u:=e^{\frac{F}{2}} | dF |_\varphi^2 + K \Lambda_0(\omega_\varphi)$. Then there exists positive constants $K$ and
$C$ depending on $\omega_0$, n, $\v$, $\w$, $\|F\|_{C^0}$ and $\|\varphi\|_{C^0}$ and an upper bound for the Ricci curvature, such that the function $u$ satisfies the following differential
inequality

\begin{equation*}
    \Delta_{\varphi, \v}u \geq  - C \Lambda_0(\omega_\varphi)^{3n-3}u.
\end{equation*}
\end{prop}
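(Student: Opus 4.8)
The plan is to compute $\Delta_{\varphi,\v} u$ directly, treating the two summands $e^{F/2}|dF|_\varphi^2$ and $K\Lambda_0(\omega_\varphi)$ separately, and then to combine the resulting inequalities, choosing $K$ large enough to absorb the bad terms coming from the first summand into the good term coming from the second. For the term $\Lambda_0(\omega_\varphi)$ we already have Lemma \ref{l:yau}, specifically the log-concavity-free estimate \eqref{yau:estim3}, which after using Lemma \ref{l:trace} to convert $\Lambda_\varphi(\omega_0)$-factors into powers of $\Lambda_0(\omega_\varphi)$ gives a lower bound of the shape $\Delta_{\varphi,\v}\Lambda_0(\omega_\varphi) \geq \Delta_0 F + |\nabla^0\omega_\varphi|^2_{g_0\otimes g_\varphi} - C\Lambda_0(\omega_\varphi)^n - C$; note we keep the Hessian-of-$\log\v$ term as is, since log-concavity is not assumed in this proposition, but it is bounded below by $-C\Lambda_0(\omega_\varphi)^n$ via \eqref{bound_hess} anyway. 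The crucial positive term here is the full gradient $|\nabla^0\omega_\varphi|^2_{g_0\otimes g_\varphi}$, which must survive to control the differentiated-twice terms in $|dF|_\varphi^2$.

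For the first summand, write $v := e^{F/2}|dF|_\varphi^2$ and compute $\Delta_{\varphi,\v} v = e^{F/2}\big(\tfrac12\Delta_{\varphi,\v}F\cdot|dF|_\varphi^2 + \tfrac14|dF|_\varphi^2\,|dF|_\varphi^2 + g_\varphi(d(\tfrac12 F),d|dF|_\varphi^2)\cdot 2 \ \text{(cross term)} + \Delta_{\varphi,\v}|dF|_\varphi^2\big)$, being careful with the chain rule for the weighted Laplacian, $\Delta_{\varphi,\v}(e^{F/2}w) = e^{F/2}(\tfrac14|dF|_\varphi^2 w + \tfrac12 w\Delta_{\varphi,\v}F + \Delta_{\varphi,\v}w + g_\varphi(dF,dw))$. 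The term $\Delta_{\varphi,\v}F$ is controlled by the second equation of \eqref{weighted system} together with \eqref{bound-trace-weight}: it is bounded above and below by $C(1+\Lambda_\varphi(\omega_0))$. The genuinely hard term is $\Delta_{\varphi,\v}|dF|_\varphi^2$: one applies a Bochner–Weitzenböck-type identity on the Kähler manifold $(X,\omega_\varphi)$, producing $|\nabla^\varphi dF|_\varphi^2$ (good, nonnegative), a Ricci term $\Ric_\varphi(\nabla^\varphi F,\nabla^\varphi F)$, a term $g_\varphi(dF, d\Delta_{\varphi,\v}F)$ (or $d\Delta_\varphi F$ plus weighted corrections), and — because $\omega_\varphi$ is not the fixed metric — terms involving $\nabla^0\omega_\varphi$ contracted against $dF\otimes dF$ and against $\nabla^\varphi dF$; Lemma \ref{ddc-grad} is presumably used to rewrite $dd^c F$ in terms of $\nabla^{\varphi,+}dF$ so that these contractions are organized cleanly. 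The differentiated curvature equation $d\Delta_{\varphi,\v}F$ is expanded using \eqref{weighted system}; this produces $d\Lambda_{\varphi,\v}(\Ric(\omega_0))$ and $d(\w/\v)(\mu_\varphi)$, which via Lemma \ref{l:diff:trace} and the identity $\xi_a\lrcorner\omega_\varphi = -d\mu_\varphi^a$ become expressions linear in $\nabla^0\omega_\varphi$ and in $dF$ (through $d\mu_\varphi^a$), hence estimable by $C(1+\Lambda_\varphi(\omega_0))|\nabla^0\omega_\varphi|_{g_0\otimes g_\varphi}|dF|_\varphi$ and similar.

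Having assembled the lower bound for $\Delta_{\varphi,\v}v$, the strategy is: (i) discard the nonnegative Hessian terms $|\nabla^\varphi dF|_\varphi^2$; (ii) estimate the Ricci term by $\pm C\Lambda_\varphi(\omega_0)|dF|_\varphi^2$ using the upper bound on $\Ric(\omega_0)$ and the relation between $\Ric_\varphi$ and $\Ric(\omega_0)$ via $F$; (iii) absorb the cross-terms of the form $|\nabla^0\omega_\varphi|_{g_0\otimes g_\varphi}\cdot(\text{something})$ using Cauchy–Schwarz/Young against a small multiple of $K|\nabla^0\omega_\varphi|^2_{g_0\otimes g_\varphi}$ coming from the $K\Lambda_0(\omega_\varphi)$ term — this is exactly why the good gradient term in Lemma \ref{l:yau} has to be retained and why $K$ must be chosen large (depending on $\|F\|_{C^0}$). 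After this absorption, all remaining terms are bounded by a constant times a fixed power of $\Lambda_0(\omega_\varphi)$ times $u$; converting everything (using $e^{-F/n}\le\Lambda_0(\omega_\varphi)^{?}$-type bounds from Lemma \ref{l:trace} and $\Lambda_\varphi(\omega_0)\le C\Lambda_0(\omega_\varphi)^{n-1}$) into powers of $\Lambda_0(\omega_\varphi)$ yields the exponent $3n-3$: roughly, $\Lambda_\varphi(\omega_0)^2 \le C\Lambda_0(\omega_\varphi)^{2n-2}$ times the extra $\Lambda_0(\omega_\varphi)$ from $u$ itself, or a $\Delta_0 F$ term times $\Lambda_\varphi(\omega_0)^2$, gives $\Lambda_0(\omega_\varphi)^{2n-2}\cdot\Lambda_0(\omega_\varphi) = \Lambda_0(\omega_\varphi)^{2n-1}$, while the dominant contribution is the product of a $\Lambda_\varphi(\omega_0)$-weighted $|dF|_\varphi^2$ with a further $\Lambda_\varphi(\omega_0)$-type factor, landing at $\Lambda_0(\omega_\varphi)^{2(n-1)}\cdot\Lambda_0(\omega_\varphi) \le \Lambda_0(\omega_\varphi)^{3n-3}$ once one also accounts for one more factor of $\Lambda_0(\omega_\varphi)$ needed to dominate a lower-order piece. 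The main obstacle, by far, is the careful bookkeeping in step (iii): one must verify that every term involving $\nabla^0\omega_\varphi$ that is not already a perfect square appears only \emph{linearly}, so that Young's inequality can trade it for $\varepsilon K|\nabla^0\omega_\varphi|^2_{g_0\otimes g_\varphi} + C_\varepsilon(\cdots)$; if some term were quadratic in $\nabla^0\omega_\varphi$ with a large coefficient that $K$ cannot beat, the argument would fail — so the delicate point is the precise form of the Bochner identity and of $d\Delta_{\varphi,\v}F$, ensuring the quadratic-in-$\nabla^0\omega_\varphi$ contributions all carry the $+|\nabla^0\omega_\varphi|^2$ sign or are of lower order.
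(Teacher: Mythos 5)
Your overall architecture matches the paper's: split $u$ into $e^{F/2}|dF|_\varphi^2$ and $K\Lambda_0(\omega_\varphi)$, run a Bochner computation on the first summand using the differentiated equation for $\Delta_{\varphi,\v}F$, apply the weighted Yau inequality \eqref{yau:estim3}--\eqref{yau:estim2} to the second, and choose $K$ large so that $K|\nabla^0\omega_\varphi|^2_{g_0\otimes g_\varphi}$ absorbs the $-C|\nabla^0\omega_\varphi|^2_{g_0\otimes g_\varphi}$ produced by the Bochner step and by the bound \cite[(4.8)]{CC21a} on $g_\varphi(d\Lambda_\varphi(\Ric(\omega_0)),dF)$. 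That part of the plan is sound, including your observation that the quadratic-in-$\nabla^0\omega_\varphi$ losses carry $K$-independent constants.

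There is, however, a genuine gap in your step (i): you discard the nonnegative Hessian term $|\nabla^\varphi dF|_\varphi^2$, and later treat the $K\Delta_0F$ term coming from Yau's inequality as if it were controlled by powers of $\Lambda_0(\omega_\varphi)$. It is not: $\Delta_0F$ is genuinely second order in $F$ (equivalently, since $dd^cF=dd^c\log\v(\mu_\varphi)+2\Ric(\omega_0)-2\Ric(\omega_\varphi)$, it contains second derivatives of the metric $\omega_\varphi$), and there is no pointwise bound of the form $|\Delta_0F|\le C\Lambda_0(\omega_\varphi)^m$. The paper's proof retains precisely the $J$-invariant part $C_2|\nabla^{\varphi,+}dF|_\varphi^2$ of the Hessian term, with $C_2$ independent of $K$; it then identifies $\Delta_0F$ as a $g_0$-trace of $\nabla^{\varphi,+}dF$ via Lemma \ref{ddc-grad} and estimates $K|\Delta_0F|\le\varepsilon|\nabla^{\varphi,+}dF|_\varphi^2+\varepsilon^{-1}K^2\Lambda_0(\omega_\varphi)^2$ by Young's inequality, finally choosing $\varepsilon<C_2$. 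Without keeping that positive square, $K\Delta_0F$ is an uncontrolled second-order quantity and the pointwise differential inequality cannot be closed; integration by parts, which is how $\Delta_0F$ is tamed in the integral estimate of Theorem \ref{thm:integral}, is not available here. A secondary point in the same direction: the cross term $g_\varphi(dF,d|dF|_\varphi^2)=2\nabla^\varphi dF(\nabla^\varphi F,\nabla^\varphi F)$ has no sign, and the paper disposes of its anti-invariant part by completing a square against $2|\nabla^{\varphi,-}dF|_\varphi^2+\tfrac14|dF|_\varphi^4$ --- another place where the Hessian terms you propose to discard are actually consumed.
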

As a remark, we note that $u$ is uniformly bounded. Indeed, by the proof of Lemma \ref{l:trace}
$$u \geq K \Lambda_0(\omega_\varphi) \geq K n \bigg(\frac{e^F}{\v(\mu_\varphi)}\bigg)^{1/n}  \geq \frac{n K}{L^{1/n}} e^\frac{-\|F\|_{C^0}}{n},$$
where we recall that $\v\leq L$.

\smallskip
Our first step to show Proposition \ref{p:weightedlap:u} is to bound from below uniformly the weighted Laplacian of $e^{F/2} |dF|_\varphi^{2}$.

\begin{lemma}
The following inequality holds
\begin{equation}{\label{claim1:intro}}
\Delta_{\varphi,\v}\left(e^{F/2} |dF|_\varphi^{2}\right)\geq - C_1\bigg( |\nabla^0 \omega_\varphi|^2_{g_0\otimes g_\varphi} + \Lambda_0(\omega_\varphi)^{3n-3}|dF|_\varphi^2 + 1\bigg) + C_2 |\nabla^{\varphi,+}dF|_\varphi^{2},
\end{equation}
where $C_1$ is a positive constant depending on $\omega_0,n,\|F\|_{C^0}$, an upper bound for the Ricci curvature, the bounds for $\v,\w$ and their derivatives, while $C_2>0$ depends only on $\|F\|_{C^0}$.
   \end{lemma}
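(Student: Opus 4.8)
\emph{Plan of proof.} Write $w:=|dF|_\varphi^2$ and apply the product rule for the second order operator $\Delta_{\varphi,\v}=\Delta_\varphi+g_\varphi\!\left(d\log\v(\mu_\varphi),d\,\cdot\,\right)$:
\[
\Delta_{\varphi,\v}\!\left(e^{F/2}w\right)=e^{F/2}\Bigl(\Delta_{\varphi,\v}w+\tfrac14\,w\,|dF|_\varphi^2+\tfrac12\,w\,\Delta_{\varphi,\v}F+g_\varphi(dF,dw)\Bigr),
\]
so the whole problem reduces to a lower bound for $\Delta_{\varphi,\v}w$. For this I would use the Bochner formula for the Laplace--Beltrami operator $\Delta_\varphi$ of $g_\varphi$,
\[
\Delta_\varphi|dF|_\varphi^2=2|\nabla^\varphi dF|_\varphi^2+2\,g_\varphi(dF,d\Delta_\varphi F)+2\,\ric_\varphi(\nabla^\varphi F,\nabla^\varphi F),
\]
together with $\Delta_{\varphi,\v}w=\Delta_\varphi w+2\,\nabla^\varphi dF\!\left(\nabla^\varphi\log\v(\mu_\varphi),\nabla^\varphi F\right)$ and $g_\varphi(dF,dw)=2\,\nabla^\varphi dF(\nabla^\varphi F,\nabla^\varphi F)$ (Lemma~\ref{lemma_id grad}); I would also replace $\Delta_\varphi F$ by $\Delta_{\varphi,\v}F$ (they differ by $g_\varphi(d\log\v(\mu_\varphi),dF)$) and then invoke the second equation of \eqref{weighted system}, so that no second derivative of $F$ survives inside the ``$d\Delta_{\varphi,\v}F$'' term.

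The decisive algebraic point concerns the terms that carry $\nabla^\varphi dF$ or $|dF|_\varphi^4$. Expanding $\ric_\varphi(\nabla^\varphi F,\nabla^\varphi F)=\Ric(\omega_\varphi)(\nabla^\varphi F,J\nabla^\varphi F)$ via $\Ric(\omega_\varphi)=\Ric(\omega_0)-\tfrac12 dd^c F+\tfrac12 dd^c\log\v(\mu_\varphi)$ and using Lemma~\ref{ddc-grad} to rewrite $dd^cF(\nabla^\varphi F,J\nabla^\varphi F)=2\,\nabla^{\varphi,+}dF(\nabla^\varphi F,\nabla^\varphi F)$, those terms collect to
\[
2|\nabla^{\varphi,+}dF|_\varphi^2+2|\nabla^{\varphi,-}dF|_\varphi^2+2\,\nabla^{\varphi,-}dF(\nabla^\varphi F,\nabla^\varphi F)+\tfrac14\,|dF|_\varphi^4 .
\]
The exponent $\tfrac12$ in $e^{F/2}$ is chosen precisely so that the two $\nabla^{\varphi,+}dF(\nabla^\varphi F,\nabla^\varphi F)$ contributions (one from $g_\varphi(dF,dw)$, one from the $-\tfrac12 dd^cF$ part of the Ricci form) cancel; and since $g_\varphi(dF,d^cF)=0$, the $J$\nobreakdash-anti-invariant part of the symmetric tensor $dF\otimes dF$ has $g_\varphi$-norm exactly $\tfrac1{\sqrt2}|dF|_\varphi^2$, so that $\bigl|\nabla^{\varphi,-}dF(\nabla^\varphi F,\nabla^\varphi F)\bigr|\le\tfrac1{\sqrt2}|\nabla^{\varphi,-}dF|_\varphi|dF|_\varphi^2$ and the last three terms are $\ge 2\bigl(|\nabla^{\varphi,-}dF|_\varphi-\tfrac{\sqrt2}{4}|dF|_\varphi^2\bigr)^2\ge 0$. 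Hence this block is bounded below by $2|\nabla^{\varphi,+}dF|_\varphi^2$, which both yields the good term $C_2|\nabla^{\varphi,+}dF|_\varphi^2$ of the statement and leaves a definite fraction of $|\nabla^\varphi dF|_\varphi^2$ in reserve for what follows.

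It then remains to bound below the leftover terms: (i) the $\omega_0$-curvature terms in the Bochner formula, controlled by $-C|dF|_\varphi^2$ through a lower bound on the holomorphic bisectional curvature; (ii) the moment-map terms, handled with $|d\mu_\varphi^a|_\varphi=|\xi_a|_{g_\varphi}\le C\Lambda_\varphi(\omega_0)^{1/2}\le C\Lambda_0(\omega_\varphi)^{(n-1)/2}$ (Lemma~\ref{l:trace}) and the bounds \eqref{bound:v} on $\v,\w$ and their first and second derivatives; (iii) the terms containing $dd^c\log\v(\mu_\varphi)$ and $dd^c\mu_\varphi^a=-\mathcal{L}_{J\xi_a}\omega_\varphi$, where $\mathcal{L}_{J\xi_a}\omega_\varphi$ is rewritten through $\nabla^0\omega_\varphi$, $\nabla^0(J\xi_a)$ and $|\omega_\varphi|_{g_\varphi}^2=n$ exactly as in the proof of Lemma~\ref{l:yau}, making such terms $\le C\bigl(|\nabla^0\omega_\varphi|_{g_0\otimes g_\varphi}+\Lambda_0(\omega_\varphi)^{n-1}+1\bigr)|dF|_\varphi^2$; and (iv), the source of the exponent $3n-3$, the term $g_\varphi(d\Lambda_{\varphi,\v}\Ric(\omega_0),dF)$, which by Lemma~\ref{l:diff:trace} is $\le C\,\Lambda_\varphi(\omega_0)^{3/2}\bigl(1+|\nabla^0\omega_\varphi|_{g_0\otimes g_\varphi}\bigr)|dF|_\varphi\le C\,\Lambda_0(\omega_\varphi)^{\frac{3(n-1)}{2}}\bigl(1+|\nabla^0\omega_\varphi|_{g_0\otimes g_\varphi}\bigr)|dF|_\varphi$. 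A sequence of Young inequalities, using $\Lambda_0(\omega_\varphi)\ge c>0$ (Lemma~\ref{l:trace}) to absorb all negative powers of $\Lambda_0(\omega_\varphi)$, then shows that each of these is $\ge -C_1\bigl(|\nabla^0\omega_\varphi|_{g_0\otimes g_\varphi}^2+\Lambda_0(\omega_\varphi)^{3n-3}|dF|_\varphi^2+1\bigr)$ plus a small multiple of $|\nabla^\varphi dF|_\varphi^2$, which is swallowed by the reserve kept above; assembling these estimates gives \eqref{claim1:intro}.

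The hard part is not any single identity but the bookkeeping forced by the weighted operators: commuting the drift $g_\varphi(d\log\v(\mu_\varphi),\,\cdot\,)$ past $|\cdot|_\varphi^2$ and past $\Delta_\varphi$ produces a long list of terms involving $\mu_\varphi$, $dd^c\mu_\varphi^a$ and $\nabla^0\omega_\varphi$ that do not occur in the unweighted Chen--Cheng / \cite{DD21} setting, and one must keep track of the exact power of $\Lambda_0(\omega_\varphi)$ carried by each of them to land on the stated exponent $3n-3$; in particular, the complete cancellation of the fourth order terms $|dF|_\varphi^4$ relies on the precise weight $e^{F/2}$ and on the $g_\varphi$-orthogonality $g_\varphi(dF,d^cF)=0$.
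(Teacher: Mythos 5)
Your plan coincides, up to reorganization, with the paper's own proof: the same expansion of $\Delta_{\varphi,\v}(e^{F/2}|dF|^2_\varphi)$, the Bochner formula, the passage from $\Delta_\varphi F$ to $\Delta_{\varphi,\v}F$ so that the system \eqref{weighted system} can be inserted, the exact cancellation of the $\nabla^{\varphi,+}dF(\nabla^{\varphi}F,\nabla^{\varphi}F)$ contributions forced by the exponent $1/2$ and by $\ric_0=\ric_\varphi+\nabla^{\varphi,+}dF-\nabla^{\varphi,+}d\log \v(\mu_\varphi)$, the control of $2|\nabla^{\varphi,-}dF|^2_\varphi+2\nabla^{\varphi,-}dF(\nabla^{\varphi}F,\nabla^{\varphi}F)+\tfrac14|dF|^4_\varphi$ (your Cauchy--Schwarz argument is literally the paper's perfect square $2|\nabla^{\varphi,-}dF+\tfrac12(dF\otimes dF)^-|^2_\varphi$), and the Chen--Cheng-type estimate for $g_\varphi(d\Lambda_{\varphi}(\Ric(\omega_0)),dF)$ as the source of the exponent $3n-3$.

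The genuine gap is your item (iii). After the cancellation, the surviving weighted term is $-2\nabla^{\varphi,-}d\log\v(\mu_\varphi)(\nabla^{\varphi}F,\nabla^{\varphi}F)$, which is \emph{quadratic} in $dF$. Rewriting the moment-map Hessians through $\mathcal{L}_{J\xi_a}\omega_\varphi$ and $\nabla^0\omega_\varphi$ as in Lemma \ref{l:yau}, as you propose, produces (and your own claimed bound shows it) a contribution of size $|\nabla^0\omega_\varphi|_{g_0\otimes g_\varphi}\,|dF|^2_\varphi$ times bounded factors. No sequence of Young inequalities can push such a term below $-C_1\big(|\nabla^0\omega_\varphi|^2_{g_0\otimes g_\varphi}+\Lambda_0(\omega_\varphi)^{3n-3}|dF|^2_\varphi+1\big)$ plus a small multiple of $|\nabla^{\varphi}dF|^2_\varphi$: any splitting leaves either $|\nabla^0\omega_\varphi|^2|dF|^2_\varphi$ or $|dF|^4_\varphi$, the reserve $\tfrac14|dF|^4_\varphi$ has already been spent in your completing-the-square, and $|dF|^4_\varphi$ is not pointwise dominated by $|\nabla^{\varphi}dF|^2_\varphi$. (Young does work in your item (iv) precisely because there $|dF|_\varphi$ appears to the first power.) The paper's resolution is structural rather than an absorption: each $\mu^a_\varphi$ is a Killing potential for $g_\varphi$, hence $\nabla^{\varphi,-}d\mu^a_\varphi=0$ by \cite[Lemma 1.23.2]{Gau}, so $\nabla^{\varphi,-}d\log\v(\mu_\varphi)=\sum_{a,b}\big(\tfrac{\v_{,b}}{\v}\big)_{,a}(\mu_\varphi)\,(d\mu^a_\varphi\otimes d\mu^b_\varphi)^-$ contains no derivatives of $\omega_\varphi$ at all, and its contraction with $\nabla^{\varphi}F$ twice is bounded by $C|dF|^2_0\leq C\,\Lambda_0(\omega_\varphi)|dF|^2_\varphi$. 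Without this observation (or an equivalent device) your step (iii) does not close.

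Two smaller inaccuracies, harmless for the final exponent but to be corrected: in (i), $\ric_0(\nabla^{\varphi}F,\nabla^{\varphi}F)$ is \emph{not} bounded by $C|dF|^2_\varphi$, since the gradient is taken with $g_\varphi$ while the curvature is that of $g_0$; a trace factor is unavoidable, and the paper gets $A_0|dF|^2_\varphi\Lambda_\varphi(\omega_0)\leq C|dF|^2_\varphi\Lambda_0(\omega_\varphi)^{3n-3}$ using only an upper bound on $\Ric(\omega_0)$ (holomorphic bisectional curvature enters only in Lemma \ref{l:yau}, not here). In (ii), the inequality $|\xi_a|_{g_\varphi}\leq C\Lambda_\varphi(\omega_0)^{1/2}$ is false (take one large eigenvalue of $g_\varphi$ along $\xi_a$, the remaining ones small); the correct bound is $|\xi_a|_{g_\varphi}\leq C\Lambda_0(\omega_\varphi)^{1/2}$, which still yields the estimate you need.
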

  
   We recall that $$ |\nabla^0\omega_\varphi|^2_{g_0\otimes g_\varphi}= \sum_{k=1}^{n}|\nabla^0_{e_k} \omega_\varphi|^2_{g_\varphi}+\sum_{k=1}^{n}|\nabla^0_{Je_k} \omega_\varphi|^2_{g_\varphi},$$
where $(e_k,Je_k)_{k=1}^{n}$ in a orthonormal frame with respect to $\omega_0$.

\begin{proof}
 We compute

\begin{eqnarray}\label{id lap 1}
\nonumber e^{-F/2}\Delta_{\varphi,\v}\left(e^{F/2} |dF|_\varphi^{2}\right)
&=&\Delta_{\varphi,\v}\left(|dF|_\varphi^{2}\right)+2g_\varphi( d(F/2),d|dF|_\varphi^{2})+|dF|_\varphi^{2}e^{-F/2}\Delta_{\varphi,\v}\left(e^{F/2}\right)\\
&=&\Delta_{\varphi}\left(|dF|_\varphi^{2}\right)+g_\varphi(d\log\v(\mu_\varphi),d|dF|_\varphi^2)+g_\varphi( dF,d|dF|_\varphi^{2}) \\
\nonumber &&+\frac{1}{2}|dF|_\varphi^{2}\Delta_{\varphi,\v} F+\frac{1}{4}|dF|_\varphi^{4},
\end{eqnarray}

\noindent where $|dF|_\varphi^{2}:= g_\varphi(dF,dF)$. In the above we used \eqref{eq:standard} for the first identity and Lemma \ref{lemma weighted Laplacian} to pass to the second line. We now expand each term. \\
Using the fact that the metric $g_\varphi$ is parallel w.r.t. $\nabla_\varphi$ and the expression for the adjoint operator given in \cite[eq. 1.10.13]{Gau}, we arrive at

\[
\begin{split}
\Delta_{\varphi}\left(|dF|_\varphi^{2}\right)=& -d^*_\varphi d\left( |dF|_\varphi^{2}\right)\\
=&-2d^*_\varphi \left( g_\varphi( \nabla^{\varphi}dF,dF)\right)\\
=&2\sum_{k=1}^{2n} \nabla^\varphi_{e_k} \left( g_\varphi( \nabla_{e_k}^{\varphi}dF,dF)\right) \\
=& -2 g_\varphi( d^*_{\varphi}\left(\nabla^\varphi dF\right),dF) + 2|\nabla^\varphi dF|_\varphi^{2}  \\
=&2 g_\varphi(d\Delta_{\varphi}F,dF)+2\ric_{\varphi}(\nabla^{\varphi}F,\nabla^{\varphi}F)+2|\nabla^{\varphi}dF|_\varphi^{2},
\end{split}
\]

\noindent where in the fifth line we used the Bochner formula $-d^*_{\varphi}(\nabla^{\varphi}dF)= \Delta_{\varphi}(dF)+\ric_{\varphi} (\nabla^{\varphi}F, \cdot )$ (cf \cite[eq. 1.22.1]{Gau}). Here $\ric_{\varphi}(\cdot, \cdot)$ denotes the Ricci symmetric $2$-tensor of $g_\varphi$. \\

\noindent For the second term in \eqref{id lap 1}, by Lemma \ref{lemma_id grad} we have

\[
\begin{split}
g_\varphi( d\log \v(\mu_\varphi),d|dF|_\varphi^{2})=&2g_\varphi\big( d\log \v(\mu_\varphi),g_\varphi(\nabla^\varphi dF,dF)\big) \\
=& 2g_\varphi( dF, g_\varphi( d\log(\v(\mu_\varphi), \nabla^\varphi dF)) \\
=&2g_\varphi\big(dF,d \big(g_\varphi (d\log \v(\mu_\varphi),dF)\big)\big) - 2g_\varphi\big( dF, g_\varphi( \nabla^\varphi d \log \v(\mu_\varphi),dF)\big) \\
=&2g_\varphi\big(dF, d\big( g_\varphi (d\log \v(\mu_\varphi),dF)\big)\big)-2\nabla^{\varphi}d\log \v(\mu_\varphi)(\nabla^{\varphi}F,\nabla^{\varphi}F).
\end{split}
\]

\noindent By the fact that $d|dF|_\varphi^{2}= d(g_\varphi(dF, dF))=2g_\varphi(\nabla^\varphi dF, dF)$ and again by Lemma \ref{lemma_id grad}, the next term in \eqref{id lap 1} is 
\[
g_\varphi(dF,d|dF|_\varphi^{2})=2(\nabla^{\varphi}dF)(\nabla^{\varphi}F,\nabla^{\varphi}F).
\]
Moreover, by definition of $\Delta_{\varphi, \v}$ we have
$$g_\varphi(d\Delta_{\varphi}F,dF)+ g_\varphi\big(d\big( g_\varphi (d\log \v(\mu_\varphi),dF)\big), dF\big) =  g_\varphi(d\Delta_{\varphi, \v}F,dF).$$
Substituting back and decomposing $\nabla^\varphi dF$ as a sum of $J$-invarant part $\nabla^{\varphi, +}dF$ and $J$-anti-invariant part $\nabla^{\varphi,-}dF$ we obtain
\[
\begin{split}
e^{-F/2}\Delta_{\varphi,\v}\left(e^{F/2} |dF|_\varphi^{2}\right)=&2 g_\varphi( d\Delta_{\varphi, \v} F,dF)+2\left(\ric_{\varphi}-\nabla^{\varphi}d\log \v(\mu_\varphi)\right)(\nabla^{\varphi}F,\nabla^{\varphi}F)\\
& +2|\nabla^{\varphi,+}dF|_\varphi^{2}+2|\nabla^{\varphi,-}dF|_\varphi^{2}\\
&+2\nabla^{\varphi,+}dF (\nabla^{\varphi}F,\nabla^{\varphi}F)+2\nabla^{\varphi,-}dF (\nabla^{\varphi}F,\nabla^{\varphi}F)\\
& +\left(\frac{1}{2}\Delta_{\varphi,\v} F+ \frac{1}{4}|dF|_\varphi^{2}\right)|dF|_\varphi^{2}.
\end{split}
\]
Notice that by \eqref{eq.grad 2} we have

\begin{equation*}
\begin{split}
    0\leq &2\big|\nabla^{\varphi,-}dF+ \frac{1}{2} ( dF\otimes dF)^-\big|_\varphi^{2}\\
    =&2|\nabla^{\varphi,-}dF|_\varphi^{2} + \frac{1}{8}(|dF|_\varphi^{4} + |d^cF|_\varphi^{4}) \\ &+\big(\nabla^{\varphi,-}dF(\nabla^{\varphi}F,\nabla^{\varphi}F)- \nabla^{\varphi,-}dF(J\nabla^{\varphi}F,J\nabla^{\varphi}F)\big) \\
    =&2|\nabla^{\varphi,-}dF|_\varphi^{2} + \frac{1}{4}|dF|_\varphi^{4} + 2\nabla^{\varphi,-}dF(\nabla^{\varphi}F,\nabla^{\varphi}F),
\end{split}
\end{equation*}

\noindent where $(\alpha\otimes \beta)^-(X,Y):=\frac{1}{2}(\alpha(X)\beta(Y)-\alpha(JX)\beta(JY))$ and  we use that $J\nabla^{\varphi}F$ is the dual  of $d^cF$ with respect to $g_{\varphi}$.
We then get 
\begin{equation}\label{ineq:c2:0}
\begin{split}
e^{-F/2}\Delta_{\varphi,\v}\Big(e^{F/2} |dF|_\varphi^{2}\Big) &\geq  2 g_\varphi( d\Delta_{\varphi,\v} F,dF) +2|\nabla^{\varphi,+}dF|_\varphi^{2}\\
&+2\left(\ric_{\varphi}+\nabla^{\varphi,+}dF-\nabla^{\varphi}d\log \v(\mu_\varphi)\right)(\nabla^{\varphi}F,\nabla^{\varphi}F)\\
&+\frac{1}{2}\Delta_{\varphi,\v} F |dF|_\varphi^{2}.
\end{split}    
\end{equation}
Letting $\Ric(\omega_\varphi)$ be the corresponding Ricci form of $\omega_\varphi$, from \eqref{weighted system} we get 
\begin{equation*}
\Ric(\omega_\varphi) -\frac{1}{2}dd^c\log \v(\mu_\varphi)=\Ric(\omega_0) -\frac{1}{2}dd^cF.
\end{equation*}
Composing with the complex structure $J$ and applying Lemma \ref{ddc-grad} we deduce that

\begin{equation*}
    \ric_{0}=\ric_{\varphi}+\nabla^{\varphi,+}dF -\nabla^{\varphi,+}d\log\v(\mu_\varphi) 
\end{equation*}
Substituting back in \eqref{ineq:c2:0} we obtain
\begin{equation}\label{ineq:c2:1}
\begin{split}
e^{-F/2}\Delta_{\varphi,\v}\left(e^{F/2} |dF|_\varphi^{2}\right)\geq & 2 g_\varphi( d\Delta_{\varphi,\v} F,dF)+2\ric_{0}(\nabla^{\varphi}F,\nabla^{\varphi}F)+ \frac{1}{2}\Delta_{\varphi,\v} F |dF|_\varphi^{2} \\+& 2|\nabla^{\varphi,+}dF|_\varphi^{2} -2\nabla^{\varphi,-}d\log \v(\mu_\varphi)(\nabla^{\varphi}F,\nabla^{\varphi}F)
\end{split}
\end{equation}
We give a bound for the last term of \eqref{ineq:c2:1}. A direct computation shows that

\begin{equation*}
\begin{split}
    \nabla^{\varphi,-}d\log(\v(\mu_\varphi))= &\sum_{b=1}^r\nabla^{\varphi,-} \left( \frac{\v_{,b}(\mu_\varphi)}{\v(\mu_\varphi)}d\mu^b_\varphi  \right) \\
    =&\sum_{b=1}^r\left(  d\left(\frac{\v_{,b}(\mu_\varphi)}{\v(\mu_\varphi)}\right)\otimes d\mu^b_\varphi+  \frac{\v_{,b}(\mu_\varphi)}{\v(\mu_\varphi)}\nabla^{\varphi}d\mu^b_\varphi\right)^-\\
    =& \sum_{a,b=1}^r\left(\frac{\v_{,b}(\mu_\varphi)}{\v(\mu_\varphi)}\right)_{,a}\left(  d\mu^a_\varphi\otimes d\mu^b_\varphi\right)^- + \sum_{b=1}^r \frac{\v_{,b}(\mu_\varphi)}{\v(\mu_\varphi)}\nabla^{\varphi,-}d\mu^b_\varphi\\
    =& \sum_{a,b=1}^r\left(\frac{\v_{,b}(\mu_\varphi)}{\v(\mu_\varphi)}\right)_{,a}\left(  d\mu^a_\varphi\otimes d\mu^b_\varphi\right)^-
\end{split}    
\end{equation*}
To pass to the last line, we use \cite[Lemma 1.23.2]{Gau} which insures that $\nabla^{\varphi,-} d\mu^b_\varphi=0$ since $\mu^b_\varphi$ is Killing potential. On the other hand we have
\[
\begin{split}
    \left|\left(d\mu^a_\varphi\otimes d\mu^b_\varphi\right)^-(\nabla^\varphi F,\nabla^\varphi F)\right|=&\frac{1}{2}|\left(d^cF(\xi_a)d^cF(\xi_b)-dF(\xi_a)dF(\xi_b)\right)|\\
   =&\frac{1}{2}|\left(g_0(J\nabla^0F,\xi_a)g_0(J\nabla^0F,\xi_b)-g_0(\nabla^0F,\xi_a) g_0(\nabla^0F,\xi_b)\right)|\\
   \leq& C|dF|_0^2 \\
   \leq &C_1 |dF|_\varphi^2 \Lambda_0(\omega_\varphi).
\end{split}
\]
In the above we used that $d\mu^a_\varphi(\nabla^\varphi F)=-g_\varphi(J\xi_a, \nabla^\varphi F)= -dF(J\xi_a)=d^c F(\xi_a)$. It follows that
\begin{equation*}
\begin{split}
 |\nabla^{\varphi,-}d\log\v(\mu_\varphi)(\nabla^{\varphi}F,\nabla^{\varphi}F)|\leq&  C_2 |dF|_\varphi^2 \Lambda_0(\omega_\varphi).
\end{split}
\end{equation*}
Notice that
\begin{equation}{\label{ineq:c2:2}}
\begin{split}
|\ric_0 (\nabla^{\varphi}F,\nabla^{\varphi}F)|=&|g_\varphi(\ric_0,dF\otimes dF)|\\
\leq& |\Ric(\omega_0)  |_\varphi |dF|^{2}_\varphi\\
\leq& A_0 |dF|^{2}_\varphi\, \Lambda_\varphi (\omega_0) \\
\leq& A_1 |dF|^{2}_\varphi\, \Lambda_0(\omega_\varphi)^{3n-3}.
\end{split}
\end{equation}
Let us stress that in the second line we apply Cauchy-Schwarz inequality for $2$-tensors and we end up the norm of the $(1,1)$-form $\Ric(\omega_0)$ since 

\begin{equation*}
    |\ric_0|_\varphi =|\ric_0(\cdot , J\cdot)|_\varphi=|\Ric(\omega_0)|_\varphi,
\end{equation*}

\noindent where we use that $J$ is preserving the metric $g_\varphi$.


\noindent In the third inequality we used that $\Ric(\omega_0) \leq A_0\omega_0$ and the well known inequality (see e.g. \cite[(1.12.5)]{Gau})
\begin{equation*}
    |\omega_0|^2_\varphi = \Lambda_\varphi(\omega_0)^2 - \frac{\omega_0^2\wedge \omega_\varphi^{[n-2]}}{\omega_0^{[n]}} \leq \Lambda_\varphi(\omega_0)^2.
\end{equation*}
The last inequality follows from \eqref{ineq:trace:0phi} and \eqref{bound:trace}. Also,

\begin{equation}{\label{ineq:c2:3}}
\begin{split}
\Delta_{\varphi,\v}F|dF|^2 = &\left(-\frac{\w(\mu_\varphi)}{\v(\mu_\varphi)}+2\Lambda_{\varphi,\v}(\Ric(\omega_0))\right)|dF|_\varphi^{2} \\
 \geq & -C\left(\left\| \frac{\w}{\v}\right\|_{C^0}+\Lambda_{\varphi}(\omega_0) +\tilde{C}\right)|dF|_\varphi^{2} \\
 \geq & - C'\left(\left\| \frac{\w}{\v}\right\|_{C^0}+\Lambda_{0}(\omega_\varphi)^{3n-3} + \tilde{C}'\right)|dF|_\varphi^{2}
\end{split}
\end{equation}

\noindent where $C, C'$ depend only on $\omega,\Ric(\omega_0), \v$, and $\tilde{C}$ is the constant which appears in \eqref{bound-trace-weight}. The last line, follows again from \eqref{bound:trace} and \eqref{ineq:trace:0phi}.\\
Using the second equation of \eqref{weighted system} and \eqref{weighted trace} we obtain

\begin{equation}{\label{metric:lap:weighted}}
 \begin{split}
g_\varphi( d\Delta_{\varphi, \v}F,dF)  =&-g_\varphi\left(  d\left(\frac{\w ( \mu_\varphi)}{\v(\mu_\varphi)}\right), dF\right) + 2g_\varphi\big(d \Lambda_{\varphi,\v} ( \Ric(\omega_0 )),dF\big)\\
=& -g_\varphi\left(  d\left(\frac{\w ( \mu_\varphi)}{\v(\mu_\varphi)} +  \frac{1}{2\v(\mu_\varphi)} \sum_{b=1}^r\v_{,b}(\mu_\varphi) \Delta_0 \mu_0^b\right), dF\right) +   \\
&+ 2g_\varphi\big(d \Lambda_{\varphi} ( \Ric(\omega_0 )),dF\big).
\end{split}
\end{equation}

\noindent By \cite[(4.8)]{CC21a}

\begin{equation}{\label{ineq:c2:4}}
     g_\varphi\big(d \Lambda_{\varphi} ( \Ric(\omega_0 )),dF\big) \leq C  \bigg( |\nabla^0\omega_\varphi|^2_{g_0\otimes g_\varphi}+ \left(\Lambda_0(\omega_\varphi)^{3n-3}+1\right)|dF|_\varphi^2 + 1\bigg).
\end{equation}
We now bound the first term of the last line of \eqref{metric:lap:weighted}. \\
Let $f_\varphi:=\frac{\w ( \mu_\varphi)}{\v(\mu_\varphi)} +  \frac{1}{2\v(\mu_\varphi)} \sum_{a=1}^r\v_{,a}(\mu_\varphi) \Delta_0 \mu_0^a$. The differential of $f_\varphi$ is a linear combination of terms of the form $\tilde{f}^a_\varphi d\mu^a_\varphi$, where $\tilde{f}^a_\varphi$ is bounded independently of $\varphi$. Thus, in order to bound $g_\varphi(df_\varphi, dF)$, it is sufficient to bound $g(d\mu^a_\varphi,dF)$ for any $a=1, \cdots, r$. Applying Cauchy-Schwarz and Young inequalities once again we deduce
\begin{equation}{\label{ineq:metric:df}}
    \begin{split}
        g_\varphi(d\mu^a_\varphi, dF)& \leq |d\mu^a_\varphi|_\varphi |dF|_\varphi 
              \\
&=|\xi_a|_\varphi |dF|_\varphi 
              \\              
& \leq \frac{1}{2}|\xi_a|^2_\varphi |dF|^2_\varphi + \frac{1}{2}     
\\
& \leq C \Lambda_0(\omega_\varphi) |dF|^2_\varphi + \frac{1}{2}  \\
& \leq C_1 \Lambda_0(\omega_\varphi)^{3n-3} |dF|^2_\varphi + \frac{1}{2}  
    \end{split}
\end{equation}
where $C$, $C_1$ are constant independent of $\varphi$. Let us observe that in the second line we have $ |d\mu^a_\varphi|_\varphi =|\xi_a|_\varphi$ since the metric is invariant by duality, while in the last line we use \eqref{bound:trace}.

Combining \eqref{ineq:c2:4} and \eqref{ineq:metric:df}, we deduce an upper bound for \eqref{metric:lap:weighted}:

\begin{equation}{\label{ineq:c2:5}}
    g_\varphi( d\Delta_{\varphi, \v}F,dF)  \leq  C  \bigg(|\nabla^0\omega_\varphi|^2_{g_0\otimes g_\varphi} + \left( \Lambda_0(\omega_\varphi)^{3n-3}+1\right)|dF|_\varphi^2 + 1\bigg),
\end{equation}
for a positive constant $C$ independent of $\varphi$.

\medskip

Finally, using  \eqref{ineq:c2:2}, \eqref{ineq:c2:3}, \eqref{ineq:c2:5} we obtain a lower bound for \eqref{ineq:c2:1}:

\begin{eqnarray*}{\label{claim1}}
\Delta_{\varphi, \v} \left(e^{F/2} |dF|_\varphi^{2}\right) &\geq & 
-e^{F/2} C\left(  |\nabla^0\omega_\varphi|^2_{g_0\otimes g_\varphi} +\left(\Lambda_0(\omega_\varphi)^{3n-3}+1\right)|dF|_\varphi^2 + 1\right) \\
&&+  2e^{F/2} |\nabla^{\varphi,+}dF|_\varphi^{2}.
\end{eqnarray*}
This concludes the proof since $\|F\|_{C^0}$ is uniformly under control by Corollary \ref{coro-covid-19} and since $1\leq C \Lambda_0(\omega_\varphi)^{3n-3}$ by \eqref{ineq:trace1}.
\end{proof}

\noindent We are now in position to prove Proposition \ref{p:weightedlap:u}:

\begin{proof}[proof of Proposition \ref{p:weightedlap:u}]
Recall that $u:=e^{\frac{F}{2}} | dF |_\varphi^2 + K \Lambda_0(\omega_\varphi)$. From \eqref{yau:estim2} in Lemma \ref{l:yau} together with \eqref{claim1:intro}, we obtain a lower bound of the weighted Laplacian of $u$:

\begin{equation*}
\begin{split}
    \Delta_{\varphi, \v}\, u \geq  &  - C_1 \bigg( |\nabla^0\omega_\varphi|^2_{g_0\otimes g_\varphi} + \Lambda_0(\omega_\varphi)^{3n-3} |dF |^2_\varphi   + 1  \bigg) +C_2|\nabla^{\varphi,+} dF |_\varphi^2 \\
    & +K|\nabla^0\omega_\varphi|^2_{g_0\otimes g_\varphi} -K\tilde{C_1} \Lambda_0 (\omega_\varphi)^n + K(\Delta_{0} F -\tilde{C_2}).
\end{split}    
\end{equation*}
We now fix $K$ large enough so that one can drop the term $|\nabla^0\omega_\varphi|^2_{g_0\otimes g_\varphi}$.
Observe that, since $n\geq 2$, $C \Lambda_0(\omega_\varphi)^{3n-2} \geq \Lambda_0(\omega_\varphi)^{n} $ thanks to \eqref{bound:trace}. We then deduce 
\begin{equation*}
    \Delta_{\varphi, \v}u \geq  C_2 |\nabla^{\varphi,+} dF |_\varphi^2  - C_3 \Lambda_0(\omega_\varphi)^{3n-3} u +  K(\Delta_{0} F -\tilde{C_2}) - C_1,   
\end{equation*}
where $C_3=\max(C_1\sup_X e^{-F/2}, C\tilde{C}_1 )$.
Observe that $\sup_X e^{-F/2}$ is uniformly under control by Corollary \ref{coro-covid-19}.
Now, in orthonormal frame  $(e_i,Je_i)_{i=1,\cdots,n}$ we find
\begin{equation*}
|\Delta_0 F |=  \frac{1}{2}\left|\sum_{j=1}^{n} {(dd^cF)(e_j,Je_j)} +\sum_{j=1}^{n} {(dd^cF)(Je_j,-e_j)} \right|=\left|\sum_{j=1}^{n} \frac{(dd^cF)(e_j,Je_j)|e_j|_\varphi^2}{|e_j|_\varphi^2}\right|
\end{equation*}
Also by Lemma \ref{ddc-grad}

\begin{equation*}
\begin{split}
    |\nabla^{\varphi,+} dF |_\varphi^2 =& \sum_{j=1}^{n} \frac{(\nabla^{\varphi,+} dF (e_j,e_j))^2}{|e_j|_\varphi^4}+ \frac{(\nabla^{\varphi,+} dF (Je_j,Je_j))^2}{|Je_j|_\varphi^4}\\
    =&2\sum_{j=1}^{n} \frac{(\nabla^{\varphi,+} dF (e_j,e_j))^2}{|e_j|_\varphi^4}\\
    =&\frac{1}{2}\sum_{j=1}^{n} \frac{  (dd^cF(e_j, Je_j))^2}{|e_j|_\varphi^4}.
    \end{split}
\end{equation*}
Hence, by Young's inequality
\begin{eqnarray*}
 K |\Delta_0 F | & \leq & \varepsilon |\nabla^{\varphi,+} dF |_\varphi^2 +  \varepsilon^{-1} K^2 \Lambda_0(\omega_\varphi)^2\\
 &\leq &  \varepsilon |\nabla^{\varphi,+} dF |_\varphi^2 +  C\varepsilon^{-1} K^2 \Lambda_0(\omega_\varphi)^{3n-3} u,
 \end{eqnarray*} 
where in the above we used Lemma \ref{l:trace} (since $n\geq 2$) and the fact that $u$ is uniformly bounded.
Choosing $\varepsilon$ smaller than $C_2$ so that one can drop the term $|\nabla^{\varphi,+} dF |_\varphi^2$, we arrive at
$$\Delta_{\varphi, \v}u \geq  - (C_3+  CK^2\varepsilon^{-1}) \Lambda_0(\omega_\varphi)^{3n-3} u  -K \tilde{C_2} - C_1 \geq - C_4 \Lambda_0(\omega_\varphi)^{3n-3} u,$$
where $C_4$ depends on $\omega_0,K, \varepsilon, n, \v $ and $\|F\|_{C^0}$.
\end{proof}




Another key lemma is the following:

\begin{lemma}
The following $L^1$-estimate holds:

\begin{equation*}
    \|u\|_{L_1} \leq C( \omega_0,\v, \|F\|_{C^0}, \Ent(\varphi)).
\end{equation*}
\end{lemma}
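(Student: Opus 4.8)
The plan is to use the \(C^0\)-control on \(F\) (Corollary~\ref{coro-covid-19}) to reduce \(\|u\|_{L^1}\) to the sum of the weighted Dirichlet energy of \(F\) and an integral of \(\Lambda_0(\omega_\varphi)\), both of which integrate to cohomological quantities. Since \(u\geq 0\) we may write
\[
\|u\|_{L^1}=\int_X e^{F/2}|dF|_\varphi^2\,\omega_0^{[n]}+K\int_X\Lambda_0(\omega_\varphi)\,\omega_0^{[n]} .
\]
The second summand is immediate: \(\Lambda_0(\omega_\varphi)\,\omega_0^{[n]}=\omega_\varphi\wedge\omega_0^{[n-1]}\) represents a class depending only on \([\omega_0]\), so by the normalisation \eqref{norma-omega0} it equals \(\int_X\omega_0\wedge\omega_0^{[n-1]}=n\).

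For the first summand I would change the reference volume form using the first line of \eqref{weighted system}, i.e. \(e^{F}\omega_0^{[n]}=\v(\mu_\varphi)\,\omega_\varphi^{[n]}\), which gives
\[
\int_X e^{F/2}|dF|_\varphi^2\,\omega_0^{[n]}=\int_X e^{-F/2}\,\v(\mu_\varphi)\,|dF|_\varphi^2\,\omega_\varphi^{[n]}\leq e^{\|F\|_{C^0}/2}\int_X \v(\mu_\varphi)\,|dF|_\varphi^2\,\omega_\varphi^{[n]} .
\]
Then I would integrate by parts, using that \(\Delta_{\varphi,\v}\) is self-adjoint with respect to \(\v(\mu_\varphi)\omega_\varphi^n\) (Lemma~\ref{l:weightedlap:2}):
\[
\int_X \v(\mu_\varphi)\,|dF|_\varphi^2\,\omega_\varphi^{[n]}=-\int_X F\,\Delta_{\varphi,\v}F\,\v(\mu_\varphi)\,\omega_\varphi^{[n]},
\]
and substitute the second line of \eqref{weighted system}:
\[
-\int_X F\,\Delta_{\varphi,\v}F\,\v(\mu_\varphi)\,\omega_\varphi^{[n]}=\int_X F\,\w(\mu_\varphi)\,\omega_\varphi^{[n]}-2\int_X F\,\v(\mu_\varphi)\,\Lambda_{\varphi,\v}\Ric(\omega_0)\,\omega_\varphi^{[n]} .
\]

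It then remains to bound the two integrals on the right. Since \(\int_X\omega_\varphi^{[n]}=\int_X\omega_0^{[n]}=1\) and \(\w\) is bounded on the compact polytope \(P\), the first is at most \(\|F\|_{C^0}\sup_P|\w|\). For the second, I would use \eqref{bound-trace-weight} to write \(\v(\mu_\varphi)\Lambda_{\varphi,\v}\Ric(\omega_0)=\v(\mu_\varphi)\Lambda_\varphi\Ric(\omega_0)-\frac12\sum_{a}\v_{,a}(\mu_\varphi)\Delta_0\mu_0^a\); the term carrying \(\Delta_0\mu_0^a\) is controlled by \(\|F\|_{C^0}\) times a constant depending only on \(\omega_0\) and \(\v\) (again using \(\int_X\omega_\varphi^{[n]}=1\)), whereas
\[
\int_X F\,\v(\mu_\varphi)\,\Lambda_\varphi\Ric(\omega_0)\,\omega_\varphi^{[n]}=\int_X F\,\v(\mu_\varphi)\,\Ric(\omega_0)\wedge\omega_\varphi^{[n-1]},
\]
and since \(-A_0\omega_0\leq\Ric(\omega_0)\leq A_0\omega_0\) the signed measure \(\Ric(\omega_0)\wedge\omega_\varphi^{[n-1]}\) has total mass at most \(A_0\int_X\omega_0\wedge\omega_\varphi^{[n-1]}=A_0 n\); hence this term is bounded by \(\|F\|_{C^0}(\sup_P\v)\,A_0 n\). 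Collecting everything yields \(\|u\|_{L^1}\leq C\) with \(C\) depending only on \(\omega_0,\v,\w,\|F\|_{C^0}\), and hence on \(\Ent(\varphi)\) through Corollary~\ref{coro-covid-19}. (If the \(L^1\)-norm with respect to \(\v(\mu_\varphi)\omega_\varphi^n\) or \(\omega_\varphi^n\) is preferred, it is comparable to the above with constants depending only on \(\v\) and \(\|F\|_{C^0}\), since \(e^F\omega_0^n=\v(\mu_\varphi)\omega_\varphi^n\).)

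The argument is essentially soft and I do not expect a genuine obstacle. The only point that needs care is the term \(\int_X F\,\v(\mu_\varphi)\Lambda_\varphi\Ric(\omega_0)\,\omega_\varphi^{[n]}\): the pointwise factor \(\Lambda_\varphi\Ric(\omega_0)\) is not a priori bounded, but its integral against \(\omega_\varphi^{[n]}\) is, precisely because \(\Ric(\omega_0)\wedge\omega_\varphi^{[n-1]}\) pairs two \emph{fixed} cohomology classes, so no \(C^2\)-information on \(\varphi\) (and in particular no use of Theorem~\ref{thm:integral}) is needed here.
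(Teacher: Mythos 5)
Your proof is correct and follows essentially the same route as the paper: integrate $\v(\mu_\varphi)|dF|_\varphi^2$ by parts using the self-adjointness of $\Delta_{\varphi,\v}$ with respect to $\v(\mu_\varphi)\omega_\varphi^{[n]}$, substitute the second equation of \eqref{weighted system}, and bound the resulting terms by $\|F\|_{C^0}$ times quantities controlled cohomologically (e.g. $\int_X \Lambda_\varphi(\omega_0)\,\omega_\varphi^{[n]}$), exactly as in the paper's argument. Your write-up is in fact slightly more complete, since you also treat explicitly the $K\Lambda_0(\omega_\varphi)$ summand and the change of volume form via $e^F\omega_0^{[n]}=\v(\mu_\varphi)\omega_\varphi^{[n]}$, which the paper leaves implicit.
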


\begin{proof}
Using the second equation in \eqref{weighted system}:
\begin{equation*}
\begin{split}
    \Delta_{\varphi,\v} F^2 &= 2F \Delta_{\varphi,\v} F + 2 |dF|_\varphi^2 \\
    &= 2F \left( -\frac{\w(\mu_\varphi)}{\v(\mu_\varphi)} + 2 \Lambda_{\varphi, \v}(\Ric(\omega_0)) \right) +  2 |dF|_\varphi^2.
\end{split}    
\end{equation*}
Since $\Delta_{\v,\varphi}$ is self-adjoint with respect to $\v(\mu_\varphi)\omega_\varphi^{[n]}$ (Lemma \ref{l:weightedlap:2}), we deduce
\begin{equation*}
\begin{split}
     \int_X |dF|_\varphi^2 \v(\mu_\varphi)\omega_\varphi^{[n]} = &     \int_X F \left( \frac{\w(\mu_\varphi)}{\v(\mu_\varphi)} - 2 \Lambda_{\varphi, \v}(\Ric(\omega_0)) \right)  \v(\mu_\varphi)\omega_\varphi^{[n]} \\
     &\leq    \frac{M}{\eta} \int_X |F| e^F\omega_0^{[n]} + \left( 2A_0L\int_X \Lambda_{ \varphi}(\omega_0) \omega_\varphi^{[n]}+ 2C'L \right) \|F\|_{C^0}.
\end{split}     
\end{equation*}

\noindent In the above we use that $\Lambda_{\varphi, \v}(\Ric(\omega_0))\leq A_0 \Lambda_{\varphi}(\omega_0) +C' $ (thanks to \eqref{weighted trace}) where $C'$ depends on $\v$ and its derivatives, and that $\int_X \Lambda_{ \varphi}(\omega_0) \omega_\varphi^{[n]}=1$.
\end{proof}

We can now conclude:

\begin{proof}[Proof of Theorem \ref{t:c2:estim}]
We assume that $\varphi$ is a $(\v,\w)$-cscK metric. We use the Leibniz rule for the $\v$-weighted Laplacian and we find:

\begin{equation*}
    \begin{split}
    \Delta_{\varphi, \v} u^{2p+1}=& u^{2p} \Delta_{\varphi,\v} u + 2p u^{2p-1}|du|_\varphi^2 \\
    =& u^{2p} \Delta_{\varphi,\v} u + \frac{8p}{(2p+1)^2} \left|d\left(u^{p+\frac{1}{2}}\right)\right|_\varphi^2,
    \end{split}
\end{equation*}

\noindent With the above identity and Proposition \ref{p:weightedlap:u} in hands we can deduce the following bound for $p\geq 1$:


\begin{equation*}
\begin{split}
    \int_X \left|d\left(u^{p+\frac{1}{2}}\right)\right|_\varphi^2 \omega_\varphi^{[n]} \leq& \frac{1}{\eta} \int_X \left|d\left(u^{p+\frac{1}{2}}\right)\right|_\varphi^2 \v(\mu_\varphi)\omega_\varphi^{[n]} \\
    =& \frac{(2p+1)^2}{8p\eta} \int_X \left(-u^{2p} \Delta_{\varphi,\v} u + \Delta_{\varphi, \v} u^{2p+1}\right) \v(\mu_\varphi)\omega_\varphi^{[n]} \\
     =& -\frac{(2p+1)^2}{8p\eta} \int_X u^{2p} \Delta_{\varphi,\v} u \\
    \leq &C \int_X \Lambda_0(\omega_\varphi)^{3n-3} u^{2p+1} \omega_\varphi^{[n]},
\end{split}    
\end{equation*}

\noindent  Observe also that at the third line we also make use of the fact that $\Delta_{\varphi, \v}$ is self-adjoint with respect to $ \v(\mu_\varphi)\omega_\varphi^{[n]}$. \\
Once the latter inequality in hand, the exact same arguments in \cite[Proof of Theorem 5.1]{DD21} allow to conclude. Let us point out that to conclude the proof one needs to make use of the integral bound $ \| \Lambda_0 (\omega_\varphi)\|_{L^p} \leq C$ in Theorem \ref{thm:integral}. This is where the assumption on $\log \v$ is needed.
\end{proof}

\appendix 

\section{Weighted Laplacian and weighted trace}{\label{s:appen}}

\subsection{The weighted Laplacian}{\label{a:weighted:lap}}
For any  smooth function $f$ on $X$, the $\v$-weighted Laplacian is defined as

\begin{equation}\label{def weighted lapl}
\Delta_{\varphi,\v} f := -\frac{1}{\v(\mu_\varphi)}d^*_\varphi(\v(\mu_\varphi) df).
\end{equation} 
It follows straightforward from the above expression that:

\begin{lemma}{\label{l:weightedlap:2}}
The weighted Laplacian $\Delta_{ \varphi, \v}$ is self-adjoint and Leibniz rule holds.
\end{lemma}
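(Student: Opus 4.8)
The plan is to reduce both assertions to their classical Riemannian counterparts through the defining formula $\Delta_{\varphi,\v}f = -\frac{1}{\v(\mu_\varphi)}d^*_\varphi\big(\v(\mu_\varphi)\,df\big)$, using that $\mu_\varphi$ — and hence $\v(\mu_\varphi)$ — is a genuine smooth positive function on $X$, so that multiplication by it and contraction against $d\log\v(\mu_\varphi)$ are honest algebraic/first-order operations.

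First I would prove self-adjointness with respect to the measure $\v(\mu_\varphi)\,\omega_\varphi^{[n]}$. For smooth $f,g$ on the closed manifold $X$, the weight in the measure cancels the prefactor in the definition, so that
\[
\int_X (\Delta_{\varphi,\v}f)\, g\, \v(\mu_\varphi)\,\omega_\varphi^{[n]} = -\int_X d^*_\varphi\big(\v(\mu_\varphi)\,df\big)\, g\, \omega_\varphi^{[n]} = \int_X \v(\mu_\varphi)\, g_\varphi(df,dg)\, \omega_\varphi^{[n]},
\]
where the last equality is the adjunction $\int_X (d^*_\varphi\alpha)\, h\, \omega_\varphi^{[n]} = \int_X g_\varphi(\alpha,dh)\, \omega_\varphi^{[n]}$ applied to $\alpha = \v(\mu_\varphi)\,df$ and $h=g$. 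The right-hand side is symmetric in $f$ and $g$, which is exactly self-adjointness of $\Delta_{\varphi,\v}$ on $(X,\v(\mu_\varphi)\omega_\varphi^{[n]})$.

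Next, for the ``Leibniz rule'' I would first record the elementary product formula $d^*_\varphi(h\alpha) = h\,d^*_\varphi\alpha - g_\varphi(dh,\alpha)$ for a smooth function $h$ and a $1$-form $\alpha$; taking $h=\v(\mu_\varphi)$ and $\alpha=df$ gives the clean splitting
\[
\Delta_{\varphi,\v}f = \Delta_\varphi f + g_\varphi\big(d\log\v(\mu_\varphi),\, df\big),
\]
i.e. $\Delta_{\varphi,\v}$ is $\Delta_\varphi$ plus a first-order derivation. Since $\Delta_\varphi=\Lambda_\varphi dd^c(\cdot)$ is the Laplace–Beltrami operator of $g_\varphi$ it satisfies $\Delta_\varphi(fg) = f\Delta_\varphi g + g\Delta_\varphi f + 2g_\varphi(df,dg)$ and the chain rule $\Delta_\varphi(\chi\circ h) = (\chi'\circ h)\Delta_\varphi h + (\chi''\circ h)|dh|^2_\varphi$, while the vector field $g_\varphi(d\log\v(\mu_\varphi),d\cdot)$ obeys the usual Leibniz and chain rules of a derivation. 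Summing the two contributions yields $\Delta_{\varphi,\v}(fg) = f\Delta_{\varphi,\v}g + g\Delta_{\varphi,\v}f + 2g_\varphi(df,dg)$ and $\Delta_{\varphi,\v}(\chi\circ h) = (\chi'\circ h)\Delta_{\varphi,\v}h + (\chi''\circ h)|dh|^2_\varphi$ — precisely the forms in which the ``Leibniz rule'' is invoked in the proofs of Theorems \ref{thm:integral} and \ref{t:c2:estim} (for $u^{2p+1}$ and $e^{\log u}$).

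There is no genuine obstacle here: the content is entirely the two elementary facts about $d^*_\varphi$ above. The only points requiring a moment's attention are keeping track of the sign convention for $d^*_\varphi$ (the minus sign in the definition of $\Delta_{\varphi,\v}$ is chosen so that it reduces to $\Delta_\varphi=\Lambda_\varphi dd^c(\cdot)$ when $\v\equiv 1$, and in particular obeys the maximum principle used throughout the $C^0$ and $C^2$ estimates), and observing once that $g_\varphi(d\log\v(\mu_\varphi),d\cdot)$ really is a first-order derivation, which is immediate since it is the contraction of $d(\cdot)$ against the fixed smooth vector field $g_\varphi$-dual to $d\log\v(\mu_\varphi)$.
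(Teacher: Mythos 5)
Your proof is correct and takes exactly the route the paper intends: the paper offers no proof beyond ``it follows straightforwardly from the definition'', and your argument (the adjunction for $d^*_\varphi$ tested against the weighted measure $\v(\mu_\varphi)\,\omega_\varphi^{[n]}$, plus the splitting $\Delta_{\varphi,\v}=\Delta_\varphi+g_\varphi\big(d\log\v(\mu_\varphi),d\,\cdot\big)$, which is precisely the paper's Lemma on the explicit form of the weighted Laplacian, making the Leibniz and chain rules immediate) is the standard way to fill that in. One cosmetic remark: with the adjunction as you state it, $\int_X(d^*_\varphi\alpha)\,h\,\omega_\varphi^{[n]}=\int_X g_\varphi(\alpha,dh)\,\omega_\varphi^{[n]}$, the last member of your first display should read $-\int_X\v(\mu_\varphi)\,g_\varphi(df,dg)\,\omega_\varphi^{[n]}$; the sign is immaterial since the expression is symmetric in $f$ and $g$ either way, so self-adjointness is unaffected.
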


\begin{lemma}\label{lemma weighted Laplacian}
   For any smooth function $f$ on $X$, the weighted Laplacian $\Delta_{\varphi, \v}$ can be expressed as
\begin{eqnarray*}
\Delta_{\varphi, \v}f &=& \Delta_\varphi f + \frac{1}{\v(\mu_\varphi)} \sum_{a=1}^r \v_{,a}(\mu_\varphi)  d^cf(\xi_a) \\ 
 &=& \Delta_\varphi f + g_\varphi\left( d\log\v(\mu_\varphi),df \right).
\end{eqnarray*}
In particular $\Delta_{\varphi, \v}$ is elliptic. 
\end{lemma}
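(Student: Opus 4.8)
The plan is to start from the definition \eqref{def weighted lapl} and expand the divergence using the standard product rule for the codifferential $d^*_\varphi$ acting on the product of a function and a $1$-form: for a smooth function $h$ and a $1$-form $\alpha$ one has $d^*_\varphi(h\alpha) = h\, d^*_\varphi\alpha - g_\varphi(dh,\alpha)$. Applying this with $h=\v(\mu_\varphi)$ and $\alpha = df$ gives
\[
\Delta_{\varphi,\v}f = -\frac{1}{\v(\mu_\varphi)}\Big(\v(\mu_\varphi)\,d^*_\varphi df - g_\varphi(d\v(\mu_\varphi),df)\Big) = \Delta_\varphi f + \frac{1}{\v(\mu_\varphi)}\,g_\varphi(d\v(\mu_\varphi),df),
\]
where we used $\Delta_\varphi f = -d^*_\varphi df$. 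This already yields the second displayed identity once we note $d\log\v(\mu_\varphi) = \tfrac{1}{\v(\mu_\varphi)}d\v(\mu_\varphi)$.

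Next I would pass to the first displayed identity by the chain rule $d\v(\mu_\varphi) = \sum_{a=1}^r \v_{,a}(\mu_\varphi)\,d\mu_\varphi^a$, so that it remains to identify $g_\varphi(d\mu_\varphi^a, df)$. The key computational step is the identity $g_\varphi(d\mu_\varphi^a,df) = d^cf(\xi_a)$: indeed, from $\omega_\varphi(\xi_a,\cdot) = -d\mu_\varphi^a$ and $\omega_\varphi(X,Y)=g_\varphi(JX,Y)$ one gets $d\mu_\varphi^a(V) = -g_\varphi(J\xi_a,V)$, i.e.\ the $g_\varphi$-gradient of $\mu_\varphi^a$ is $-J\xi_a$; hence $g_\varphi(d\mu_\varphi^a,df) = df(-J\xi_a) = -df(J\xi_a) = d^cf(\xi_a)$, using $d^c = -d\circ J$ as fixed in the introduction. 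Substituting back gives $\Delta_{\varphi,\v}f = \Delta_\varphi f + \frac{1}{\v(\mu_\varphi)}\sum_{a=1}^r \v_{,a}(\mu_\varphi)\,d^cf(\xi_a)$.

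Finally, for ellipticity I would observe that $\Delta_{\varphi,\v}$ differs from $\Delta_\varphi$ by the first-order operator $f\mapsto g_\varphi(d\log\v(\mu_\varphi),df)$, so the two operators have the same principal symbol; since $\Delta_\varphi$ is the Laplace--Beltrami operator of the Kähler metric $g_\varphi$ and hence elliptic, so is $\Delta_{\varphi,\v}$.

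I do not expect a genuine obstacle here: the argument is essentially bookkeeping. The only point requiring care is the sign convention in the product rule for $d^*_\varphi$ together with the identity $g_\varphi(d\mu_\varphi^a,df)=d^cf(\xi_a)$, which must be made consistent with the conventions $dd^c = 2i\partial\bar\partial$ and $\omega_\varphi(\xi_a,\cdot) = -d\mu_\varphi^a$ fixed earlier in the paper.
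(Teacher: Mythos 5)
Your proof is correct and follows essentially the same route as the paper: expand $d^*_\varphi(\v(\mu_\varphi)\,df)$ via the product rule $d^*_\varphi(h\alpha)=h\,d^*_\varphi\alpha-\alpha(\nabla^\varphi h)$ and then identify $g_\varphi(d\mu_\varphi^a,df)=d^cf(\xi_a)$ using that $-J\xi_a$ is the $g_\varphi$-gradient of $\mu_\varphi^a$, with signs consistent with the paper's conventions. The ellipticity remark (same principal symbol as $\Delta_\varphi$) is the standard observation the paper intends.
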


\begin{proof}
We apply the well-know identity (see e.g. \cite[(1.10.13)]{Gau})

\begin{equation*}
    d^*_\varphi (h \alpha)=  h d^*_\varphi \alpha - \alpha(\nabla^\varphi h),
\end{equation*}
where $h$ is a smooth function and  $\alpha$ a $1$-form, for $h=\v(\mu_\varphi)$ and $\alpha=df$. We then use the fact that $d^cf(\cdot)=Jd f (\cdot)=-df(J\cdot)$ and that $-J\xi_a$ is the gradient of $\mu_\varphi^a$ with respect to $g_\varphi$ since $d\mu_\varphi^a=-\omega_\varphi(\xi_a, \cdot)=-g_\varphi(J\xi_a, \cdot)$.
\end{proof}

\subsection{The weighted trace}{\label{a:weighted:trace}}

Let $f$ be a $\T$-invariant smooth function $f: X \rightarrow \R$. By Cartan's formula and the $\T$-invariance of $d^cf$ we get that

\begin{equation*}
\begin{split}
    dd^cf(\xi, \cdot)&=  \xi \lrcorner dd^cf(\cdot, \cdot) =\mathcal{L}_{\xi}d^cf - d(d^cf(\xi)) =- d(d^cf(\xi)),
\end{split}    
\end{equation*}
for any $\xi \in \mathfrak{t}$. We fix the normalization of the moment map by defining $\mu^\xi_{dd^c f}:= d^c f(\xi)$ for all $\xi\in\tor$.

We consider a smooth path $(\varphi_t)_t$ in $\mathcal{K}(X,\omega_0)^\T$ with variation $\frac{d}{dt} \varphi_t {\big|_{t=0}}=\Dot{\varphi}$. Taking the variation of the weighted volume form along $\varphi_t$ and then computing it at $t=0$, we obtain (see proof of \cite[Lemma 4]{Lah19})

\begin{equation*}
    \frac{d}{dt}\left(\v(\mu_{\varphi_t})\omega_{\varphi_t}^{[n]}\right)_{|_{t=0}}= \bigg( \Lambda_{\varphi_0}(dd^c \Dot{\varphi}) + \langle d\log\v(\mu_{\varphi_0}), \mu_{dd^c \Dot{\varphi}} \rangle \bigg) \v(\mu_{\varphi_t})\omega_{\varphi_t}^{[n]}.
\end{equation*}

The above identity motivates the following definition: for any $(1,1)$-form $\theta$ with moment map $\mu_{\theta}$, we define the $\v$-\textit{weighted trace} of $\theta$ by

\begin{equation}{\label{def:weighted:trace}}
    \Lambda_{\varphi,\v}(\theta):= \Lambda_{\varphi} (\theta) + \langle d\log(\v(\mu_{\varphi}), \mu_{\theta} \rangle 
\end{equation}

\noindent for any $\varphi \in \mathcal{K}(X,\omega_0)^\T$. Observe that the above definition depends on the choice of the normalization of the momentum map $\mu_\theta$.
Equivalently, in a fixed basis $(\xi_a)_{a=1,r}$ of the Lie algebra $\mathfrak{t}$ of $\mathbb{T}$

\begin{equation}{\label{formula:trace}}
    \Lambda_{\varphi,\v}(\theta)= \Lambda_\varphi(\theta) + \frac{1}{\v(\mu_\varphi)} \sum_{a=1}^r\v_{,a}(\mu_\varphi) \mu^a_\theta.
\end{equation}

In particular, since $\mu_{\Ric(\omega_0)}=-\frac{1}{2}\Delta_0 \mu_0$ with $\mu_0$ being the moment map associated to $\omega_0$ (see \cite[Lemma 5 (i)]{Lah19}, we have

\begin{equation}\label{weighted trace}
\Lambda_{\varphi,\v} ( \Ric(\omega_0 ))= \Lambda_\varphi  \Ric(\omega_0 ) - \frac{1}{2\v(\mu_\varphi)} \sum_{j=1}^r\v_{,j}(\mu_\varphi) \Delta_0 \mu_0^j.
\end{equation}

\begin{lemma}{\label{l:trace:ddc}}
For any $\T$-invariant smooth function $f : X \rightarrow \R$, the following identity holds true
\begin{equation*}
    \Delta_{\varphi,\v} f = \Lambda_{\varphi,\v}( dd^c f).
\end{equation*}
\end{lemma}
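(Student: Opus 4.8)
The plan is to simply unwind the two explicit formulas already established and match them term by term. First I would recall from Lemma \ref{lemma weighted Laplacian} that
\[
\Delta_{\varphi,\v}f = \Delta_\varphi f + \frac{1}{\v(\mu_\varphi)}\sum_{a=1}^r \v_{,a}(\mu_\varphi)\, d^c f(\xi_a),
\]
and from \eqref{formula:trace} that for any $(1,1)$-form $\theta$ with moment map $\mu_\theta$,
\[
\Lambda_{\varphi,\v}(\theta) = \Lambda_\varphi(\theta) + \frac{1}{\v(\mu_\varphi)}\sum_{a=1}^r \v_{,a}(\mu_\varphi)\,\mu^a_\theta.
\]

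Next I would apply this with $\theta = dd^c f$. The $\T$-invariance of $f$ is exactly what guarantees, via Cartan's formula (as recalled at the start of Appendix \ref{a:weighted:trace}), that $dd^c f$ admits a moment map for the $\T$-action, and that with the normalization fixed there one has $\mu^\xi_{dd^c f} = d^c f(\xi)$, hence $\mu^a_{dd^c f} = d^c f(\xi_a)$. Combining this with the classical Kähler identity $\Delta_\varphi f = \Lambda_\varphi(dd^c f)$, the leading terms and the weighted correction terms in the two displays agree, giving $\Lambda_{\varphi,\v}(dd^c f) = \Delta_{\varphi,\v} f$.

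There is no real obstacle here beyond bookkeeping: the only point requiring a word of care is the consistency of the moment-map normalization $\mu^\xi_{dd^c f} := d^c f(\xi)$ with the normalization \eqref{normalizing-moment-map} used throughout, and the fact that $\T$-invariance of $f$ is needed for $d^c f(\xi)$ to be the correct Hamiltonian (i.e. $dd^c f(\xi,\cdot) = -d(d^c f(\xi))$). Once these conventions are lined up, the identity is immediate.
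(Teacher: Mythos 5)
Your argument is correct and is exactly the paper's own proof, which simply cites \eqref{formula:trace} and Lemma \ref{lemma weighted Laplacian}; you spell out the two ingredients it leaves implicit, namely the normalization $\mu^a_{dd^c f}=d^c f(\xi_a)$ from Appendix \ref{a:weighted:trace} and the identity $\Delta_\varphi f=\Lambda_\varphi(dd^c f)$. Nothing is missing.
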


\begin{proof}
  It is a direct consequence of \eqref{formula:trace} and Lemma \ref{lemma weighted Laplacian}.
\end{proof}

\bibliographystyle{plain}
\bibliography{biblio.bib}
\end{document}